\newtheorem{defi}{Definition}[section]
\newtheorem{thm}{Theorem}[section]
\newtheorem{lem}{Lemma}[section]
\newtheorem{cor}{Corollary}[section]
\newtheorem{prop}{Proposition}[section]
\newtheorem{rem}{Remark}[section]
\providecommand{\norm}[1]{\left\lVert#1\right\rVert}
\DeclareMathOperator*{\argmin}{arg\,min}
\DeclareMathOperator*{\med}{\texttt{med}}
\DeclareRobustCommand{\rchi}{{\mathpalette\irchi\relax}}
\newcommand{\irchi}[2]{\raisebox{\depth}{$#1\chi$}}
\def\namedlabel#1#2{\begingroup
    #2%
    \def\@currentlabel{#2}%
    \phantomsection\label{#1}\endgroup
}
\newcommand{\vertiii}[1]{{\left\vert\kern-0.25ex\left\vert\kern-0.25ex\left\vert #1
    \right\vert\kern-0.25ex\right\vert\kern-0.25ex\right\vert}}
\newcommand{\red}{\color{red}}
\newcommand{\blue}{\color{blue}}
\begin{document}
 \begin{frontmatter}
\title{All of Linear Regression}
\runtitle{All of Linear Regression}
\begin{aug}
  \author{\fnms{Arun K.}  \snm{Kuchibhotla}\ead[label=e1]{arunku@upenn.edu, buja.at.wharton@gmail.com}},
  \author{\fnms{Lawrence D.} \snm{Brown}},
  % \and
  \author{\fnms{Andreas} \snm{Buja}},
  \and
  \author{\fnms{Junhui} \snm{Cai}}
  % % ,
  % % \author{\fnms{Edward I.} \snm{George}},
  % % \and
  % % \author{\fnms{Linda} \snm{Zhao}}
  %   % \ead[label=e3]{third@somewhere.com}%
  %   % \ead[label=u1,url]{http://www.foo.com}}

  \runauthor{Kuchibhotla et al.}

  \affiliation{University of Pennsylvania}

  \address{University of Pennsylvania\\ \printead{e1}}
% \thankstext{t1}{Corresponding author.}

\end{aug}
\maketitle

\begin{abstract}
Least squares linear regression is one of the oldest and widely used data analysis tools. Although the theoretical analysis of ordinary least squares (OLS) estimator is as old, several fundamental questions are yet to be answered. Suppose regression observations $(X_1,Y_1),\ldots,(X_n,Y_n)\in\mathbb{R}^d\times\mathbb{R}$ (not necessarily independent) are available. Some of the questions we deal with are as follows: under what conditions, does the OLS estimator converge and what is the limit? What happens if the dimension is allowed to grow with $n$? What happens if the observations are dependent with dependence possibly strengthening with $n$? How to do statistical inference under these kinds of misspecification? What happens to OLS estimator under variable selection? How to do inference under misspecification and variable selection?

We answer all the questions raised above with one simple deterministic inequality which holds for any set of observations and any sample size. This implies that all our results are finite sample (non-asymptotic) in nature. At the end, one only needs to bound certain random quantities under specific settings of interest to get concrete rates and we derive these bounds for the case of independent observations. In particular the problem of inference after variable selection is studied, for the first time, when $d$, the number of covariates increases (almost exponentially) with sample size $n$. We provide comments on the ``right'' statistic to consider for inference under variable selection and efficient computation of quantiles.   
\end{abstract}

\end{frontmatter}

%%%%%%%%%%%%%%%%%%%%%%%%%%%%%%%%%%%%%%%%%
\section{Introduction}
Linear regression is one of the oldest and most widely practiced data analysis method. In many real data settings least squares linear regression leads to performance in par with state-of-the-art (and often far more complicated) methods while remaining amenable to interpretation. These advantages coupled with the argument ``all models are wrong'' warrants a detailed study of least squares linear regression estimator in settings that are close to the practical/realistic ones. Instead of proposing assumptions that we think are practical/realistic, we start with a clean slate. We start by not assuming anything about the observations $(X_1^{\top},Y_1)^{\top}, \ldots, (X_n^{\top},Y_n)^{\top}\in\mathbb{R}^d\times\mathbb{R}$ and study the OLS estimator $\hat{\beta}$ given by
\[
\hat{\beta} := \argmin_{\theta\in\mathbb{R}^d}\,\frac{1}{n}\sum_{i=1}^n (Y_i - X_i^{\top}\theta)^2,
\]
where $\argmin$ represents a $\theta$ at which the minimum is attained and this $\hat{\beta}$ may not be unique, in which case any of the minimizers is set as $\hat{\beta}$. This clean slate study should be compared to the usual assumption-laden approach where one usually starts by assuming that there exists a vector $\beta_0\in\mathbb{R}^d$ such that $Y_i = X_i^{\top}\beta_0 + \varepsilon_i$ for independent and identically distributed Gaussian homoscedastic errors $\varepsilon_1,\ldots,\varepsilon_n$. The classical linear regression setting (Gauss-Markov model) sometimes also assumes $X_1,\ldots,X_n$ are deterministic/non-stochastic. In this model, it is well-known that $\hat{\beta}$ has a normal distribution and is the best linear unbiased estimator (BLUE) for every sample size $n\ge d$.
\paragraph{Why is a clean slate study possible?} At first glance it might seem strange how a study without assumptions is possible. For a simple explanation, set
\begin{equation}\label{eq:Def-Gamma-Sigma}
\hat{\Gamma} := \frac{1}{n}\sum_{i=1}^n X_iY_i\quad\mbox{and}\quad \hat{\Sigma} := \frac{1}{n}\sum_{i=1}^n X_iX_i^{\top}.
\end{equation}
Now the vector $\hat{\beta}$ can be written as
\begin{equation}\label{eq:BetaHatGammaSigmaDef}
\hat{\beta} := \argmin_{\theta\in\mathbb{R}^d}\,-2\theta^{\top}\hat{\Gamma} + \theta^{\top}\hat{\Sigma}\theta,
\end{equation}
which implies that $\hat{\beta}$ is a minimizer of a (positive semi-definite) quadratic problem. Intuition suggests that if $\hat{\Gamma}\approx\Gamma$ and $\hat{\Sigma}\approx\Sigma$ then $\hat{\beta}$ is close to $\beta$ given by
\begin{equation}\label{eq:BetaGammaSigmaDef}
\beta := \argmin_{\theta\in\mathbb{R}^d}\,-2\theta^{\top}\Gamma + \theta^{\top}\Sigma\theta.
\end{equation}
A follow-up of this intuition suggests an explicit bound on $\|\hat{\beta} - \beta\|$ given bounds on $\|\hat{\Gamma} - \Gamma\|$ and $\|\hat{\Sigma} - \Sigma\|$, for (possibly different) norms $\|\cdot\|$. This viewpoint is usually seen in perturbation analysis of optimization problems; see~\cite{bonnans2013perturbation}. Note that~\eqref{eq:BetaHatGammaSigmaDef} can be seen as a perturbation of~\eqref{eq:BetaGammaSigmaDef}. Implementation of this program leads to our deterministic inequality and all subsequent results follow from this result as relatively simple corollaries.
\paragraph{Organization of the paper.} The remaining paper is organized as follows. We start, in Section~\ref{sec:Main-Deterministic-Ineq}, with a simple deterministic inequality that provides ``consistency'' and ``asymptotic normality'' of the OLS estimator $\hat{\beta}$. This will be a part survey with full proofs since similar results appeared before. We will describe explicit corollaries of this inequality for a Berry--Esseen type result for $\hat{\beta}$ that bounds the closeness of the distribution of $\hat{\beta}$ to that of a normal distribution; this is a finite sample result. In a way, this completes the study of OLS estimator in the clean slate setting because normal approximation is the crucial ingredient in statistical inference leading to confidence intervals and hypothesis tests; this discussion is given in Section~\ref{sec:Statistical-Inference}. The test statistics and confidence regions presented in this section are different from the ones used in the classical study. We chose to present the unconventional ones since they will be useful in the study of OLS estimator in presence of variable (or covariate) selection.

We then proceed to study OLS in presence of variable selection in Section~\ref{sec:OLS-Variable-Selection}. The setting here is that the analyst choses a subset of covariates (possibly depending on the data) and then consider the OLS estimator on that subset of covariates. Thanks to the deterministic inequality in Section~\ref{sec:Main-Deterministic-Ineq}, the results for this setting also follow directly. As a corollary, we also prove a Berry--Esseen type result uniformly over all subset of variables. We end Section~\ref{sec:OLS-Variable-Selection} with a discussion on how to perform statistical inference under variable selection in case observations are ``weakly'' dependent without stressing on details (about resampling). This discussion also includes the question of the ``right'' statistic to consider to inference under variable selection. All the results to this point will be deterministic, finite sample (or non-asymptotic). In Section~\ref{sec:Independence}, we provide explicit rate bounds for remainders in the deterministic inequalities from previous sections under independence of observations. This will complete the study of inference under variable selection, at least under independence, when the number of covariates is allowed to increase. We supplement these theoretical results with some numerical evidence in Section~\ref{sec:Simulations} where the proposed statistics for inference under variable selection are compared to the ones in the literature. The paper ends with a discussion and some comments on computation for inference under variable selection in Section~\ref{sec:Summary}.
\paragraph{Notation.} The following notation will be useful. For any vector $v\in\mathbb{R}^d$, $v^{\top}$ represents its transpose and $v_M\in\mathbb{R}^{|M|}$ for $M\subseteq\{1,2,\ldots,d\}$ represents the sub-vector of $v$ with entries in $M$. For instance $v = (4,3,2,1)^{\top}$ and $M = \{2,3\}$ then $v_M = (3,2)^{\top}$. Similarly for a symmetric matrix $A\in\mathbb{R}^{d\times d}$, $A_{M}\in\mathbb{R}^{|M|\times|M|}$ represents the sub-matrix of $A$ with entries in $M\times M$. The Euclidean norm in any dimension is given by $\|\cdot\|$. For any matrix $A$, let $\|A\|_{op}$ represents the operator norm of $A$, that is, $\|A\|_{op} = \sup_{\|\theta\| = 1}\|A\theta\|$. For any vector $\mu\in\mathbb{R}^q$ and any covariance matrix $\Omega\in\mathbb{R}^{q\times q}$, $N(\mu,\Omega)$ represents the (multivariate) normal distribution with mean $\mu$, covariance $\Omega$ and with some abuse of notation we also use $N(\mu,\Omega)$ to denote a random vector with that Gaussian distribution. For any covariance matrix $A$, $A^{1/2}$ represents the matrix square root and when we write $A^{-1}$ it is implicitly assumed that $A$ is invertible with inverse $A^{-1}.$ The identity matrix in dimension $q$ is given by $I_q$. Further for any covariance matrix $A\in\mathbb{R}^{q\times q}$ and vector $x\in\mathbb{R}^{q}$, $\|x\|_A := \sqrt{x^{\top}Ax}$.%, which is a proper norm.
\section{Main Deterministic Inequality}\label{sec:Main-Deterministic-Ineq}
Recall the quantities $\hat{\Gamma}$ and~$\hat{\Sigma}$ defined in~\eqref{eq:Def-Gamma-Sigma}. The following result proves deterministic bounds on estimation error and linear representation error for the OLS estimator $\hat{\beta}$. Let $(t)_+ := \max\{0, t\}$ for $t\in\mathbb{R}$ and for any $\Sigma\in\mathbb{R}^{d\times d}$, set
\begin{equation}\label{eq:D-Sigma-Definition}\textstyle
\mathcal{D}^{\Sigma} := \|\Sigma^{-1/2}\hat{\Sigma}\Sigma^{-1/2} - I_d\|_{op}.
\end{equation}
% Note that $\mathcal{D}^{\Sigma}$ is (usually) random since $\hat{\Sigma}$ is.
\begin{thm}[Deterministic Inequality]\label{thm:Deterministic-Ineq}
For any symmetric matrix $\Sigma\in\mathbb{R}^{d\times d}$and for any vector $\beta\in\mathbb{R}^d$, we have
\begin{equation}\label{eq:Consistency}
(1 + \mathcal{D}^{\Sigma})^{-1}{\|\Sigma^{-1}(\hat{\Gamma} - \hat{\Sigma}\beta)\|_{\Sigma}} ~\le~ \|\hat{\beta} - \beta\|_{\Sigma} ~\le~ (1 - \mathcal{D}^{\Sigma})_+^{-1}{\|\Sigma^{-1}(\hat{\Gamma} - \hat{\Sigma}\beta)\|_{\Sigma}}.
\end{equation}
Furthermore,
\begin{equation}\label{eq:Linear-Representation}\textstyle
\|\hat{\beta} - \beta - \Sigma^{-1}(\hat{\Gamma} - \hat{\Sigma}\beta)\|_{\Sigma} ~\le~ {\mathcal{D}^{\Sigma}}(1 - \mathcal{D}^{\Sigma})_+^{-1}\|\Sigma^{-1}(\hat{\Gamma} - \hat{\Sigma}\beta)\|_{\Sigma}. 
\end{equation} 
\end{thm}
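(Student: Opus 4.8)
The plan is to reduce both displayed inequalities to elementary operator-norm estimates by means of a single change of variables, after first invoking the normal equations. Since $\hat{\beta}$ minimizes the convex quadratic in~\eqref{eq:BetaHatGammaSigmaDef}, its first-order optimality condition reads $\hat{\Sigma}\hat{\beta} = \hat{\Gamma}$, and this holds for any minimizer because $\hat{\Gamma}$ lies in the column space of $\hat{\Sigma}$ by construction. Consequently $\hat{\Gamma} - \hat{\Sigma}\beta = \hat{\Sigma}(\hat{\beta} - \beta)$, so the ``target'' vector $\Sigma^{-1}(\hat{\Gamma} - \hat{\Sigma}\beta)$ equals $\Sigma^{-1}\hat{\Sigma}(\hat{\beta} - \beta)$.

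Next I would set $u := \Sigma^{1/2}(\hat{\beta} - \beta)$ and $A := \Sigma^{-1/2}\hat{\Sigma}\Sigma^{-1/2}$, so that $\mathcal{D}^{\Sigma} = \|A - I_d\|_{op}$ by~\eqref{eq:D-Sigma-Definition}. A short computation using $\|v\|_\Sigma^2 = v^{\top}\Sigma v$ shows that these substitutions turn every $\Sigma$-weighted norm in the statement into a plain Euclidean norm: one checks that $\|\hat{\beta} - \beta\|_\Sigma = \|u\|$, that $\|\Sigma^{-1}(\hat{\Gamma} - \hat{\Sigma}\beta)\|_\Sigma = \|Au\|$, and that the linear-representation residual $\hat{\beta} - \beta - \Sigma^{-1}(\hat{\Gamma} - \hat{\Sigma}\beta)$ has $\Sigma$-norm equal to $\|(I_d - A)u\|$.

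With this reduction in hand, inequality~\eqref{eq:Consistency} becomes the purely linear-algebraic claim $(1+\mathcal{D}^\Sigma)^{-1}\|Au\| \le \|u\| \le (1 - \mathcal{D}^{\Sigma})_+^{-1}\|Au\|$, which I would derive from the forward and reverse triangle inequalities applied to the splitting $Au = u + (A - I_d)u$ together with $\|(A - I_d)u\| \le \mathcal{D}^\Sigma\|u\|$; the positive-part truncation $(\cdot)_+$ simply records that the upper bound is vacuously $+\infty$ in the degenerate regime $\mathcal{D}^\Sigma \ge 1$. For the linear-representation bound~\eqref{eq:Linear-Representation}, I would combine $\|(I_d - A)u\| \le \mathcal{D}^\Sigma\|u\|$ with the upper bound $\|u\| \le (1 - \mathcal{D}^{\Sigma})_+^{-1}\|Au\|$ just established, giving $\mathcal{D}^\Sigma(1 - \mathcal{D}^{\Sigma})_+^{-1}\|Au\|$, which is exactly the right-hand side.

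There is no serious analytic obstacle; the entire content sits in choosing the right substitution, after which everything is a triangle inequality. The one point that genuinely demands care is the normal equations when $\hat{\Sigma}$ is singular and $\hat{\beta}$ is non-unique: one must check that \emph{every} minimizer, not merely the minimum-norm solution, satisfies $\hat{\Sigma}\hat{\beta} = \hat{\Gamma}$, which follows from convexity of the objective together with the consistency of the normal equations guaranteed by $\hat{\Gamma} \in \mathrm{col}(\hat{\Sigma})$. Beyond that, the argument is a bookkeeping exercise verifying that the three weighted norms transform as claimed.
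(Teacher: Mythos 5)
Your proof is correct and follows essentially the same route as the paper's: both reduce the claim, via the normal equations and the substitution $u = \Sigma^{1/2}(\hat{\beta}-\beta)$, $A = \Sigma^{-1/2}\hat{\Sigma}\Sigma^{-1/2}$, to the identity $u - Au = (I_d - A)u$ and then apply the forward and reverse triangle inequalities together with $\|(A - I_d)u\| \le \mathcal{D}^{\Sigma}\|u\|$. Your extra remark that every minimizer (not just the minimum-norm one) satisfies $\hat{\Sigma}\hat{\beta} = \hat{\Gamma}$ because $\hat{\Gamma} \in \mathrm{col}(\hat{\Sigma})$ is a welcome bit of care that the paper leaves implicit, but it does not change the argument.
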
 
\begin{proof}
From the definition of $\hat{\beta}$, we have the normal equations $\hat{\Sigma}\hat{\beta} = \hat{\Gamma}.$
Subtracting $\hat{\Sigma}\beta\in\mathbb{R}^d$ from both sides, we get
$\hat{\Sigma}(\hat{\beta} - \beta) = \hat{\Gamma} - \hat{\Sigma}\beta,$
which is equivalent to
\[\textstyle
(\Sigma^{-1/2}\hat{\Sigma}\Sigma^{-1/2})\Sigma^{1/2}(\hat{\beta} - \beta) ~=~ \Sigma^{-1/2}(\hat{\Gamma} - \hat{\Sigma}\beta).
\]
Adding and subtracting $I_d$ from the parenthesized term with further rearrangement, we get
$\Sigma^{1/2}(\hat{\beta} - \beta) - \Sigma^{-1/2}(\hat{\Gamma} - \hat{\Sigma}\beta) = \left(I_d - \Sigma^{-1/2}\Sigma\Sigma^{-1/2}\right)\Sigma^{1/2}(\hat{\beta} - \beta).$
Taking Euclidean norm on both sides yields
\begin{align}\label{eq:Almost-Final-Bound}
\begin{split}
\left\|\Sigma^{1/2}\big[\hat{\beta} - \beta - \Sigma^{-1}(\hat{\Gamma} - \hat{\Sigma}\beta)\big]\right\| ~&=~ \|(I_d - \Sigma^{-1/2}\hat{\Sigma}\Sigma^{-1/2})\Sigma^{1/2}(\hat{\beta} - \beta)\|\\
~&\le~ \|I_d - \Sigma^{-1/2}\hat{\Sigma}\Sigma^{-1/2}\|_{op}\|\Sigma^{1/2}(\hat{\beta} - \beta)\|\\
~&=~ \mathcal{D}^{\Sigma}\|\hat{\beta} - \beta\|_{\Sigma},
\end{split}
\end{align}
where the inequality follows from the definition of the operator norm, $\|\cdot\|_{op}$. 
%Assumption~\eqref{assump:Main-Operator-Norm} now implies
% \begin{equation}\label{eq:Almost-Final-Bound}
% \|\hat{\beta} - \beta - \Sigma^{-1}(\hat{\Gamma} - \hat{\Sigma}\beta)\|_{\Sigma} ~\le~ \delta\|\hat{\beta} - \beta\|_{\Sigma}. 
% \end{equation}
Triangle inequality shows
$|\|\hat{\beta} - \beta\|_{\Sigma} - \|\Sigma^{-1}(\hat{\Gamma} - \hat{\Sigma}\beta)\|_{\Sigma}| ~\le~ \|\hat{\beta} - \beta - \Sigma^{-1}(\hat{\Gamma} - \hat{\Sigma}\beta)\|_{\Sigma},$
which when combined with~\eqref{eq:Almost-Final-Bound} yields
\[
\|\hat{\beta} - \beta\|_{\Sigma} ~\le~ \frac{\|\Sigma^{-1}(\hat{\Gamma} - \hat{\Sigma}\beta)\|_{\Sigma}}{(1 - \mathcal{D}^{\Sigma})_+}\quad\mbox{and}\quad \|\hat{\beta} - \beta\|_{\Sigma} ~\ge~ \frac{\|\Sigma^{-1}(\hat{\Gamma} - \hat{\Sigma}\beta)\|_{\Sigma}}{1 + \mathcal{D}^{\Sigma}}. 
\]
These inequalities prove~\eqref{eq:Consistency} and when combined with~\eqref{eq:Almost-Final-Bound} implies~\eqref{eq:Linear-Representation}.
\end{proof}
Theorem~\ref{thm:Deterministic-Ineq} is a very general result that holds for any set of observations (not even necessarily random). It is noteworthy that the result holds for any symmetric matrix $\Sigma$ and ``target'' vector $\beta\in\mathbb{R}^d$. A canonical choice of $\Sigma$ and $\beta$ are given by
\begin{equation}\label{eq:Canonical-choice}
\Sigma := \mathbb{E}[\hat{\Sigma}] = \frac{1}{n}\sum_{i=1}^n \mathbb{E}\left[X_iX_i^{\top}\right],\quad\mbox{and}\quad \beta := \argmin_{\theta\in\mathbb{R}^d}\,\frac{1}{n}\sum_{i=1}^n \mathbb{E}\left[(Y_i - X_i^{\top}\theta)^2\right].
\end{equation}
It is important to note here that just by taking expectations we do not necessarily require all observations to be (non-trivially) random; even fixed numbers are random with a degenerate distribution. For example, in the classical linear model $X_i$'s are treated fixed and non-stochastic in which case $\Sigma = \hat{\Sigma}$ and hence $\mathcal{D}^{\Sigma} = 0$. Moreover, we neither require any specific dependence structure on the observations nor any specific scaling of dimension $d$ with $n$. By a careful inspection of the proof and a slight adjustment of $\mathcal{D}^{\Sigma}$ in~\eqref{eq:D-Sigma-Definition}, it is possible to prove the result for $\|\hat{\beta} - \beta\|$, the usual Euclidean norm, instead of $\|\hat{\beta} - \beta\|_{\Sigma}$. The added advantage of using $\|\cdot\|_{\Sigma}$ is affine invariance of the result. 
\paragraph{Flexibility in the Choice of $\Sigma$ and $\beta$.}For most purposes the canonical choices of $\Sigma, \beta$ in~\eqref{eq:Canonical-choice} suffice but for some applications involving sub-sampling and cross-validation, the flexibility in choosing $\Sigma, \beta$ helps. For instance, consider the OLS estimator constructed based on the first $n-1$ observations, that is,
\[
\hat{\beta}_{-n} := \argmin_{\theta\in\mathbb{R}^d}\,{(n-1)^{-1}}\sum_{i=1}^{n-1} (Y_i - X_i^{\top}\theta)^2 = \argmin_{\theta\in\mathbb{R}^d}\, -2\theta^{\top}\hat{\Gamma}_{-n} + \theta^{\top}\hat{\Sigma}_{-n}\theta,
\]
where
$\hat{\Gamma}_{-n} := (n-1)^{-1}\sum_{i=1}^{n-1} X_iY_i$ and $\hat{\Sigma}_{-n} := (n-1)^{-1}\sum_{i=1}^{n-1} X_iX_i^{\top}.$
It is of natural interest to compare $\hat{\beta}_{-n}$ with $\hat{\beta}$ rather than the canonical choice of $\beta$. In this case $\Sigma$ is taken to be $\hat{\Sigma}$ which is much closer to $\hat{\Sigma}_{-n}$ than $\mathbb{E}[\hat{\Sigma}_{-n}]$:
\[
\hat{\Sigma}_{-n} = \frac{n}{n-1}\left(\hat{\Sigma} - n^{-1}X_nX_n^{\top}\right)\;\;\Rightarrow\;\; \hat{\Sigma}^{-1/2}\hat{\Sigma}_{-n}\hat{\Sigma}^{-1/2} = \frac{nI_d}{n-1} - \frac{\hat{\Sigma}^{-1/2}X_nX_n^{\top}\hat{\Sigma}^{-1/2}}{n-1}.
\]
Hence $\|\hat{\Sigma}^{-1/2}\hat{\Sigma}_{-n}\hat{\Sigma}^{-1/2} - I_d\|_{op} \le (n-1)^{-1}[1 + \|\hat{\Sigma}^{-1/2}X_n\|^2].$% which is smaller (in order) than the distance between $\hat{\Sigma}_{-n}$ and $\mathbb{E}[\hat{\Sigma}_{-n}]$. 

\subsection[Consistency]{Consistency of $\hat{\beta}$} If $\mathcal{D}^{\Sigma} < 1$, then inequalities in~\eqref{eq:Consistency} provides both upper bounds and lower bounds on the estimation error $\|\hat{\beta} - \beta\|_{\Sigma}$ that match up to a constant multiple. This allows one to state that necessary and sufficient condition for convergence of $\|\hat{\beta} - \beta\|_{\Sigma}$ to zero is $\|\Sigma^{-1}(\hat{\Gamma} - \hat{\Sigma}\beta)\|_{\Sigma}$ has to converge to zero. Note that with the choices in~\eqref{eq:Canonical-choice} $\Sigma^{-1}(\hat{\Gamma} - \hat{\Sigma}\beta)$ is a mean zero random vector obtained by averaging $n$ random vectors and hence ``weak'' dependence implies convergence of covariance to zero implying convergence to zero. This implies consistency of the OLS estimator $\hat{\beta}$ to $\beta$:
\begin{cor}[Consistency]\label{cor:Consistency}
If $\mathcal{D}^{\Sigma} < 1$ and $\|\Sigma^{-1}(\hat{\Gamma} - \hat{\Sigma}\beta)\|_{\Sigma}$ converges to zero in probability then $\|\hat{\beta} - \beta\|_{\Sigma}$ converges to zero in probability.
\end{cor}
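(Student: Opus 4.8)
The plan is to read off the corollary directly from the upper half of the estimation-error bound \eqref{eq:Consistency}; this is the only ingredient of Theorem~\ref{thm:Deterministic-Ineq} that is needed, and the whole task is to push a deterministic inequality through a limit while controlling the multiplicative prefactor. Concretely, \eqref{eq:Consistency} gives, on the event $\{\mathcal{D}^{\Sigma} < 1\}$ where the positive part is inactive,
\[
\|\hat{\beta} - \beta\|_{\Sigma} ~\le~ (1 - \mathcal{D}^{\Sigma})^{-1}\,\|\Sigma^{-1}(\hat{\Gamma} - \hat{\Sigma}\beta)\|_{\Sigma}.
\]
When $\mathcal{D}^{\Sigma}$ is deterministic and strictly below one --- as in the fixed-design case noted after the theorem, where $\mathcal{D}^{\Sigma} = 0$ --- the prefactor is a fixed constant, and the conclusion is immediate: a constant times a quantity that converges to zero in probability (the second hypothesis) still converges to zero in probability.

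For the genuinely random case I would argue by event-splitting. Fix $\epsilon > 0$ and a threshold $\eta \in (0,1)$. On $\{\mathcal{D}^{\Sigma} \le 1 - \eta\}$ the displayed inequality yields $\|\hat{\beta} - \beta\|_{\Sigma} \le \eta^{-1}\|\Sigma^{-1}(\hat{\Gamma} - \hat{\Sigma}\beta)\|_{\Sigma}$, so that
\[
\mathbb{P}\big(\|\hat{\beta} - \beta\|_{\Sigma} > \epsilon\big) ~\le~ \mathbb{P}\big(\|\Sigma^{-1}(\hat{\Gamma} - \hat{\Sigma}\beta)\|_{\Sigma} > \eta\epsilon\big) + \mathbb{P}\big(\mathcal{D}^{\Sigma} > 1 - \eta\big).
\]
By the second hypothesis the first term on the right vanishes as $n \to \infty$, so everything reduces to the behaviour of the second term.

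That second term is where the only real care is needed, and it is the main obstacle. The prefactor $(1 - \mathcal{D}^{\Sigma})^{-1}$ blows up if $\mathcal{D}^{\Sigma}$ is permitted to drift toward one, so the hypothesis ``$\mathcal{D}^{\Sigma} < 1$'' must be read as $\mathcal{D}^{\Sigma}$ staying bounded away from one --- either deterministically, or with probability tending to one, i.e.\ $\mathbb{P}(\mathcal{D}^{\Sigma} > 1 - \eta) \to 0$ for some fixed $\eta \in (0,1)$. Under that reading both terms above vanish and the claimed convergence in probability follows. I would flag this as the substantive content of the first hypothesis and remark that it holds automatically whenever $\mathcal{D}^{\Sigma} \to 0$ in probability, which, as indicated in the discussion preceding the corollary, is the generic situation under weak dependence.
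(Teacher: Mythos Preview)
Your proposal is correct and matches the paper's approach: the corollary is treated as an immediate consequence of the upper bound in \eqref{eq:Consistency}, and the paper does not even spell out a separate proof. Your extra care about interpreting the hypothesis $\mathcal{D}^{\Sigma} < 1$ (bounded away from one, or with probability tending to one) is a reasonable clarification that the paper leaves implicit.
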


Turning to inequality~\eqref{eq:Linear-Representation}, note that if $\mathcal{D}^{\Sigma} \to 0$ (in appropriate sense) then inequality~\eqref{eq:Linear-Representation} provides an expansion of $\hat{\beta} - \beta$ since the remainder (the right hand side of~\eqref{eq:Linear-Representation}) is of smaller order than $\hat{\beta} - \beta$. Observe that
$\Sigma^{-1}(\hat{\Gamma} - \hat{\Sigma}\beta) = n^{-1}\sum_{i=1}^n \Sigma^{-1}X_i(Y_i - X_i^{\top}\beta),$
and hence~\eqref{eq:Linear-Representation} shows that $\hat{\beta} - \beta$ behaves like an average (a linear functional) up to a lower order term. The claim
\begin{equation}\label{eq:Influence-function-expansion}
\sqrt{n}(\hat{\beta} - \beta) = \frac{1}{\sqrt{n}}\sum_{i=1}^n \Sigma^{-1}X_i(Y_i - X_i^{\top}\beta) + o_p(1),
\end{equation}
is usually referred to as an influence function expansion or a linear approximation result. This plays a pivotal role in statistical inference because of the following reason. Ignoring the $o_p(1)$ term, the right hand side of~\eqref{eq:Influence-function-expansion} is a mean zero (scaled) average of random vectors which, under almost all dependence settings of interest, converges to a normal distribution if the dimension $d$ is fixed or even diverging ``slow enough''. This implies that $\sqrt{n}(\hat{\beta} - \beta)$ has an asymptotic normal distribution and an accessible estimator of the (asymptotic) variance implies confidence intervals/regions and hypothesis tests. This discussion is in asymptotic terms and can be made explicitly finite sample which we do in the following subsection with inference related details in the next section. 

\subsection{Normal Approximation: Berry--Esseen Result} In the following corollary (of Theorem~\ref{thm:Deterministic-Ineq}), we prove a bound on closeness of distribution of $\hat{\beta} - \beta$ to a normal distribution. We need some definitions. Set
\begin{equation}\label{eq:Berry-Esseen-Average}\textstyle
\Delta_n := \sup_{A\in\mathcal{C}_d}\left|\mathbb{P}(\Sigma^{-1/2}(\hat{\Gamma} - \hat{\Sigma}\beta) \in A) - \mathbb{P}\left(N(0, K) \in A\right)\right|,
\end{equation}
where $\mathcal{C}_d$ represents the set of all convex sets in $\mathbb{R}^d$ and $K := \mbox{Var}(\Sigma^{-1/2}(\hat{\Gamma} - \hat{\Sigma}\beta))$.
For any matrix $A$, let $\|A\|_{HS}$ represent the Hilbert-Schmidt (or Frobenius) norm, that is, $\|A\|_{HS}^2 := \sum_{i,j} A^2(i,j)$. Also, for any positive semi-definite matrix $A$, let $\|A\|_{*}$ denote the nuclear norm of the matrix $A$. 
\begin{cor}[Berry--Esseen bound for OLS]\label{cor:BEOLS}
Fix any $\eta\in(0, 1)$. Then there exists universal constants $c_1, c_2 > 0$ such that for all $n\ge 1$,
\begin{align*}
&\sup_{A\in\mathcal{C}_d}\left|\mathbb{P}(\hat{\beta} - \beta\in A) - \mathbb{P}\left(N(0, \Sigma^{-1/2}K\Sigma^{-1/2}) \in A\right)\right|\\ &\qquad\le 4\Delta_n + 2n^{-1} + c_2\|K_n^{-1}\|_*^{1/4}r_n\eta + \mathbb{P}\left(\mathcal{D}^{\Sigma} > \eta\right),
\end{align*}
where recall $\mathcal{D}^{\Sigma}$ from~\eqref{eq:D-Sigma-Definition} and $r_n := c_1^{-1}\|K^{1/2}\|_{op}\sqrt{\log n} + \|K^{1/2}\|_{HS}$.
\end{cor}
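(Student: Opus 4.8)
The plan is to reduce the statement to a perturbation-plus-anti-concentration argument centered at the Gaussian $N(0,K)$, using the linear representation~\eqref{eq:Linear-Representation} as the only input about $\hat{\beta}$. First I would eliminate $\Sigma$ by affine invariance. Writing $W := \Sigma^{-1/2}(\hat{\Gamma} - \hat{\Sigma}\beta)$ and $V := \Sigma^{1/2}(\hat{\beta} - \beta)$, the event $\{\hat{\beta} - \beta \in A\}$ equals $\{V \in \Sigma^{1/2}A\}$, and $\Sigma^{1/2}A$ ranges over all convex sets as $A$ does; the same reindexing applies to $N(0,\Sigma^{-1/2}K\Sigma^{-1/2}) = \Sigma^{-1/2}N(0,K)$. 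Hence
\[
\sup_{A\in\mathcal{C}_d}\big|\mathbb{P}(\hat{\beta}-\beta\in A)-\mathbb{P}(\Sigma^{-1/2}N(0,K)\in A)\big| = \sup_{B\in\mathcal{C}_d}\big|\mathbb{P}(V\in B)-\mathbb{P}(N(0,K)\in B)\big|,
\]
so it suffices to compare $V$ with $N(0,K)$ over convex sets, and by definition $W$ is already within $\Delta_n$ of $N(0,K)$ in this metric.

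The link between $V$ and $W$ is exactly the linear representation~\eqref{eq:Linear-Representation}, which in this notation reads $\|V - W\| \le \mathcal{D}^{\Sigma}(1-\mathcal{D}^{\Sigma})_+^{-1}\|W\|$. On the event $\{\mathcal{D}^{\Sigma} \le \eta\}$ this is the multiplicative bound $\|V-W\| \le \tfrac{\eta}{1-\eta}\|W\|$; to convert it into a usable additive perturbation I would intersect with the high-probability event $\{\|W\| \le r_n\}$, on which $\|V-W\| \le \epsilon := \tfrac{\eta}{1-\eta}r_n$. The specific form of $r_n$ is dictated by a Gaussian norm bound: since $\{\|w\|\le r_n\}$ is convex, $\mathbb{P}(\|W\|>r_n) \le \mathbb{P}(\|N(0,K)\|>r_n) + \Delta_n$, and the standard mean-plus-deviation tail estimate $\|N(0,K)\| \le \|K^{1/2}\|_{HS} + c_1^{-1}\|K^{1/2}\|_{op}\sqrt{\log n} = r_n$ holds with probability at least $1 - n^{-1}$. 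This is precisely where $r_n$, an extra $\Delta_n$, and an $n^{-1}$ term enter.

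With $\|V-W\|\le\epsilon$ off an exceptional set, I would run the usual two-sided sandwich: for convex $A$, one has $\{V\in A\}\subseteq\{W\in A^{\epsilon}\}\cup(\text{bad})$ and $\{W\in A^{-\epsilon}\}\cap(\text{good})\subseteq\{V\in A\}$, where $A^{\epsilon}$ and $A^{-\epsilon}$ are the Euclidean dilation and erosion (both convex). Each replacement $W\leftrightarrow N(0,K)$ costs a $\Delta_n$, and the residual $\mathbb{P}(N(0,K)\in A^{\epsilon}\setminus A^{-\epsilon})$ is controlled by a Gaussian anti-concentration inequality over convex sets applied to $N(0,K)$, which is what produces the factor $\|K^{-1}\|_*^{1/4}$ multiplying the shell width $\epsilon = \tfrac{\eta}{1-\eta}r_n$. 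Collecting the $\Delta_n$'s from the two swaps and from the norm control, the tail probabilities $\mathbb{P}(\mathcal{D}^{\Sigma}>\eta)$ and $\mathbb{P}(\|W\|>r_n)\le n^{-1}+\Delta_n$, and the anti-concentration term then assembles into $4\Delta_n + 2n^{-1} + c_2\|K^{-1}\|_*^{1/4}r_n\eta + \mathbb{P}(\mathcal{D}^{\Sigma}>\eta)$, absorbing $\tfrac{\eta}{1-\eta}$ into $c_2$.

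The main obstacle is the Gaussian anti-concentration bound over all convex sets with the correct dependence on $K$. A naive reduction $N(0,K)=K^{1/2}N(0,I_d)$ together with the surface-area estimate $\sup_{A}\gamma(A^{\epsilon}\setminus A)\le 4d^{1/4}\epsilon$ for standard Gaussians only yields the cruder factor $d^{1/4}\|K^{-1/2}\|_{op}$, because the dilation does not transform cleanly under the linear map and one is forced to use $\lambda_{\min}(K)$. Sharpening this to $\|K^{-1}\|_*^{1/4}$ requires an anti-concentration estimate that exploits the full spectrum of $K$ rather than its extreme eigenvalue; this is the delicate step. A secondary point is that the representation error is multiplicative in $\|W\|$, so the argument genuinely requires the companion high-probability bound on $\|W\|$ and the resulting coupling between the shell width and $r_n$, rather than any deterministic control.
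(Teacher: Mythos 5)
Your proposal is correct and follows essentially the same route as the paper's proof: reduce to comparing $\Sigma^{1/2}(\hat\beta-\beta)$ with $\Sigma^{-1/2}(\hat\Gamma-\hat\Sigma\beta)$ via the linear representation, control $\|\Sigma^{-1/2}(\hat\Gamma-\hat\Sigma\beta)\|$ by a Gaussian norm tail plus $\Delta_n$ (the paper uses the Hanson--Wright inequality here), run the dilation/shell sandwich with two $\Delta_n$ swaps, and bound the resulting Gaussian shell probability by an anti-concentration inequality over convex sets. The one ingredient you flag as delicate --- anti-concentration with the $\|K^{-1}\|_*^{1/4}$ (rather than $d^{1/4}\lambda_{\min}(K)^{-1/4}$) dependence --- is exactly what the paper imports from Lemma 2.6 of Bentkus (2003), so your identification of the needed external result is precisely right.
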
 
The proof of the corollary can be found in Appendix~\ref{AppSec:ProofBEOLS} and it does not require~\eqref{eq:Canonical-choice}. The proof of normal approximation for multivariate minimum contrast estimators in~\cite{Pfanzagl72} is very similar to that of Corollary~\ref{cor:BEOLS}. Like Theorem~\ref{thm:Deterministic-Ineq}, Corollary~\ref{cor:BEOLS} is also a finite sample result that does not assume any specific dependence structure on the observations. The quantity $\Delta_n$ in~\eqref{eq:Berry-Esseen-Average} is a quantification of convergence of right hand side of~\eqref{eq:Influence-function-expansion} to a normal distribution and is bounded by the available multivariate Berry--Esseen bounds. Such bounds for independent (but not necessarily identically distributed) random vectors can be found in~\cite{Bent04} and~\cite{raivc2018multivariate}. For dependent settings, multivariate Berry--Esseen bounds are hard to find but univariate versions available (in~\cite{RomanoWolf00} and~\cite{Siegfried09}) can be extended to multivariate versions by the characteristic function method and smoothing inequalities. In this respect, we note here that the proof of Corollary~\ref{cor:BEOLS} can be extended to prove a normal approximation result for $\alpha^{\top}(\hat{\beta} - \beta)$ for any specific direction $\alpha\in\mathbb{R}^d$ and for this univariate random variable results from above references apply directly. Finally to get concrete rates from the bound in Corollary~\ref{cor:BEOLS}, we only need to choose $\eta\in(0, 1)$ and for this we need to control the tail probability of $\mathcal{D}^{\Sigma}$ in~\eqref{eq:D-Sigma-Definition}. There are two choices for this. Firstly, assuming moment bounds for $X_1,\ldots,X_n$, it is possible to get a tail bound for $\mathcal{D}^{\Sigma}$ under reasonable dependence structures; see~\cite{Uniform:Kuch18} and \citet[for independence case]{koltchinskii2017a}. Secondly, one can use a Berry--Esseen type result~\citep{koltchinskii2017b} for $\mathcal{D}^{\Sigma}$ which also implies an exponential tail bound up to an analogue of $\Delta_n$ term.
\paragraph{Glimpse of the Rates.} Assuming observations $(X_i,Y_i),1\le i\le n$ are sufficiently weakly dependent and have enough moments, it can be proved that
\begin{equation}\label{eq:Rates-Glimpse}
\|\Sigma^{-1}(\hat{\Gamma} - \hat{\Sigma}\beta)\|_{\Sigma} + \|\Sigma^{-1/2}\hat{\Sigma}\Sigma^{-1/2} - I_d\|_{op} = O_p(1)\sqrt{\frac{p}{n}}.
\end{equation}
See Section~\ref{sec:Independence}. For concrete rates in normal approximation, observe that
\[
\|K^{-1}\|_{*}^{1/4} \le p^{1/4}\|K^{-1}\|_{op}^{1/4}\quad\mbox{and}\quad \|K^{1/2}\|_{HS} = \sqrt{\mbox{tr}(K)} \le p^{1/2}\|K\|_{op}^{1/2}.
\]
This implies that 
\[
\|K^{-1}\|_*^{1/4}r_n ~=~ p^{1/4}\|K^{-1}\|^{1/4}_{op}~\times~ O(\|K\|_{op}^{1/2}\sqrt{\log n} + p^{1/2}\|K\|_{op}^{1/2}).
\] 
Under weak enough dependence structure, $\Sigma^{1/2}(\hat{\Gamma} - \hat{\Sigma}\beta)$ is $O_p(n^{-1/2})$ in any fixed direction and hence $\|K\|_{op} = O_p(n^{-1})$ where, recall, $K = \mbox{Var}(\Sigma^{-1/2}(\hat{\Gamma} - \hat{\Sigma}\beta))$. Assuming $\|K^{-1}\|_{op} \asymp \|K\|_{op}^{-1}$, we get $\|K^{-1}\|_*^{1/4}r_n = O(n^{-1/4}[p^{1/4}\sqrt{\log n} + p^{3/4}])$. In the best case scenario $\Delta_n \ge O(p^{7/4}n^{-1/2})$ and hence to match this rate, we can to take $\eta = O(n^{-1/4})$ which is a permissible choice under~\eqref{eq:Rates-Glimpse}. Hence we can claim
\begin{equation}\label{eq:Vague-Rates-Normal-Approximation}
\sup_{A\in\mathcal{C}_d}\left|\mathbb{P}(\hat{\beta} - \beta\in A) - \mathbb{P}\left(N(0, \Sigma^{-1/2}K\Sigma^{-1/2}) \in A\right)\right| = O(1)\frac{p^{7/4}}{n^{1/2}}.
\end{equation}
We have intentionally left the conditions vague which will be cleared in Section~\ref{sec:Independence}.
\paragraph{The Curious Case of Fixed Covariates.} In the conventional linear models theory, the covariates are treated fixed/non-stochastic. Since our results are deterministic in nature, this distinction does not matter for the validity of our results. However, in case of fixed covariates the canonical choices for $\Sigma, \beta$ mentioned above result in simpler results. For instance, it is clear that non-stochastic covariates leads to $\Sigma = \hat{\Sigma}$, both of which are non-stochastic, and hence
$\mathcal{D}^{\Sigma} = 0.$
Theorem~\ref{thm:Deterministic-Ineq} now implies that
% \[
$\|\hat{\beta} - \beta - \hat{\Sigma}^{-1}(\hat{\Gamma} - \hat{\Sigma}\beta)\|_{\hat{\Sigma}} = 0,$
% \]
or equivalently, $\hat{\beta} - \beta = \hat{\Sigma}^{-1}(\hat{\Gamma} - \hat{\Sigma}\beta)$ which is trivial from the definition of $\hat{\beta}$. Further from Corollary~\ref{cor:BEOLS}, we get
\[
\sup_{A\in\mathcal{C}_d}|\mathbb{P}(\hat{\beta} - \beta\in A) - \mathbb{P}(N(0, \Sigma^{-1/2}K\Sigma^{-1/2}) \in A)| \le 4\Delta_n + 2n^{-1},
\]
since $\eta$ can be taken to be zero in limit. In fact a careful modification of the proof leads to a sharper right hand side as $\Delta_n$. These calculations hint at a previously unnoticed phenomenon: The bounds for random covariates are inherently larger than those for fixed covariates (although they are all of same order). A similar statement also holds when some of the covariates are fixed but others are random (the bounds have extra terms only for random set of covariates). This phenomenon means that, when working with finite samples, the statistical conclusions can be significantly distorted depending on whether the covariates are treated fixed or random. Here it is worth mentioning that the canonical choice of $\beta$ changes depending on whether covariates are treated random or fixed. If the covariates are fixed, then the canonical choice $\beta$ is
$\beta = (n^{-1}\sum_{i=1}^n x_ix_i^{\top})^{-1}(n^{-1}\sum_{i=1}^n x_i\mathbb{E}[Y_i]),$ where we write $x_i$ (rather than $X_i$) to represent fixed nature of covariates. If the covariates are random, then the canonical choice $\beta$ is
$\beta = (n^{-1}\sum_{i=1}^n \mathbb{E}[X_iX_i^{\top}])^{-1}(n^{-1}\sum_{i=1}^n \mathbb{E}[X_iY_i]).$
\section{Statistical Inference for the OLS estimator}\label{sec:Statistical-Inference}
Given that the distribution of $\hat{\beta} - \beta$ is close to a mean zero Gaussian, inference follows if the variance of the Gaussian can be estimated. The variance of the Gaussian is given by
\begin{equation}\label{eq:V-definition}\textstyle
\Sigma^{-1}V\Sigma^{-1} := \Sigma^{-1}\mbox{Var}\left(n^{-1}\sum_{i=1}^n X_i(Y_i - X_i^{\top}\beta)\right)\Sigma^{-1},
\end{equation}
which is, sometimes, referred to as the sandwich variance. The two ends of the variance $\Sigma^{-1}$ can be estimated by $\hat{\Sigma}^{-1}$. The only troublesome part is the ``meat'' part which is the variance of a mean zero average. Estimation of this part requires an understanding of the dependence structure of observations. For instance if the observations are independent then we can readily write
\begin{equation}\label{eq:Conservative-Variance}
V = \frac{1}{n^2}\sum_{i=1}^n \mbox{Var}(X_i(Y_i - X_i^{\top}\beta)) ~\preceq~ \frac{1}{n^2}\sum_{i=1}^n \mathbb{E}\left[X_iX_i^{\top}(Y_i - X_i^{\top}\beta)^2\right]. 
\end{equation}
The inequality above is the matrix inequality representing the difference of matrices is positive semi-definite. A strict inequality above can hold since the observations need not satisfy $\mathbb{E}[X_i(Y_i - X_i^{\top}\beta)] = 0$. (The definition of $\beta$ only implies $\sum_{i=1}^n \mathbb{E}[X_i(Y_i - X_i^{\top}\beta)] = 0$.) The last term on the right of~\eqref{eq:Conservative-Variance} can be estimated by $n^{-2}\sum_{i=1}^n X_iX_i^{\top}(Y_i - X_i^{\top}\hat{\beta})^2$ (obtained by removing the expectation and then replacing $\beta$ by $\hat{\beta}$). This leads to asymptotically conservative inference for $\beta$ and it can be proved that asymptotically exact inference is impossible without further assumptions such as $\mathbb{E}[X_i(Y_i - X_i^{\top}\beta)] = 0$ for all $i$; see~\citet[Proposition 3.5]{Bac16} for an impossibility result. Instead if the observations are not independent but $m$-dependent, then the first equality of~\eqref{eq:Conservative-Variance} does not hold and a correction is needed involving the covariances of different summands; see~\cite{White2001} for details under specific dependence structures. 

Once an estimator (possibly conservative) $\hat{\Sigma}^{-1}\hat{V}\hat{\Sigma}^{-1}$ of the variance is available, a (possibly conservative) $(1 - \alpha)$-confidence region for $\beta\in\mathbb{R}^d$ can be obtained as
\begin{equation}\label{eq:Chi-Square-Region}
\hat{\mathcal{R}}_{2,\alpha} := \{\theta\in\mathbb{R}^d:\,(\hat{\beta} - \theta)^{\top}\hat{\Sigma}\hat{V}^{-1}\hat{\Sigma}(\hat{\beta} - \theta) \le \rchi^2_{d,\alpha}\},
\end{equation}
where $\rchi^2_{d,\alpha}$ represents the $(1 - \alpha)$-th quantile of the chi-square distribution with $d$ degrees of freedom. If $\hat{V}$ is an asymptotically conservative estimator for $V$ that is $\hat{V} \to \bar{V}$ (in an appropriate sense) and $\bar{V} \succeq V$, then
\begin{align*}
&\mathbb{P}(N(0, \Sigma^{-1}V\Sigma^{-1})^{\top}\hat{\Sigma}\hat{V}^{-1}\hat{\Sigma}N(0, \Sigma^{-1}V\Sigma^{-1}) \le \rchi^2_{d,\alpha})\\ 
&\qquad\to \mathbb{P}(N(0, \bar{V}^{-1/2}V\bar{V}^{-1/2})^{\top}N(0, \bar{V}^{-1/2}V\bar{V}^{-1/2})) \ge 1 - \alpha,
\end{align*}
where strict inequality holds if $\bar{V} \succ V$; the inequality above is true because of Anderson's lemma~\citep[Corollary 3]{Anderson55} and it may not be true for non-symmetric confidence regions.
An alternate $(1-\alpha)$-confidence region for $\beta$ is% given by
\begin{equation}\label{eq:Max-t-Region}\textstyle
\hat{\mathcal{R}}_{\infty,\alpha} := \left\{\theta\in\mathbb{R}^d:\,\max_{1\le j\le d}\left|{\widehat{AV}_j}^{-1/2}(\hat{\beta}_j - \theta_j){}\right| \le z_{\infty,\alpha}\right\},
\end{equation}
where $\widehat{AV}_j$ represents the $j$-th diagonal entry of the variance estimator $\hat{\Sigma}^{-1}\hat{V}\hat{\Sigma}^{-1}$ and $z_{\infty,\alpha}$ is the $(1 - \alpha)$-th quantile of
% \begin{equation}\label{eq:Max-Statistic-Full-model}
$\max_{1\le j\le d}|AV_j^{-1/2}{N(0, AV)_j}|,$
% \end{equation}
with $AV\in\mathbb{R}^{d\times d}$ represents the variance matrix $\Sigma^{-1}V\Sigma^{-1}$. 

Hypothesis tests for $\beta\in\mathbb{R}^d$ can also be performed based on the statistics used in~\eqref{eq:Chi-Square-Region} and~\eqref{eq:Max-t-Region}. It is easy to verify that neither statistic uniformly beats the other in terms of power. The tests for a single coordinate $\beta_j$ are easy to obtain from the statistic $(\hat{\beta}_j - \beta_j)/\widehat{AV}_j^{1/2}$ which is close to a standard normal random variable. 

The advantage of $\hat{\mathcal{R}}_{\infty,\alpha}$ over $\hat{\mathcal{R}}_{2,\alpha}$ is that it leads to a rectangular region and hence easily interpretable inference for coordinates of $\beta$. The confidence region $\hat{\mathcal{R}}_{2,\alpha}$ which is elliptical makes this interpretation difficult.

Inference based on a closed form variance estimator can be thought of as a direct method and is, in general, hard to extend to general dependence structures. A safe choice and a more unified way of estimating the variance is by the use of some resampling scheme. Bootstrap and subsampling or their block versions are robust to slight changes in dependence structures and are more widely applicable. The literature along these lines is so vast to review and we refer the reader to~\cite{kunsch1989jackknife}, \cite{liu1992moving}, \cite{politis1994large}, \cite{lahiri1999theoretical} for general block sampling techniques for variance/distribution estimation. Finite sample study of direct method is easy while such a study for resampling methods (under dependence) is yet non-existent.
\section{OLS Estimator under Variable Selection}\label{sec:OLS-Variable-Selection}
Having understood the properties of the OLS estimator obtained from the full set of covariates, we now proceed to the practically important aspect of OLS under variable selection. More often than not is the case that the set of covariates in the final reported model is not the same as the full set of covariates and more concernedly the final set of covariates is chosen based on the data at hand. For concreteness, let $\hat{M}\subseteq\{1,2,\ldots,d\}$ represent the set of covariates selected and let $\hat{\beta}_{\hat{M}}$ represent the OLS estimator constructed based on covariate (indices) in $\hat{M}$. More generally for any set $M\subseteq\{1,2,\ldots,d\}$, let $\hat{\beta}_M$ represent the OLS estimator from covariates in $M$, that is,
\[
\hat{\beta}_{M} := \argmin_{\theta\in\mathbb{R}^{|M|}}\,\sum_{i=1}^n (Y_i - X_{i,M}^{\top}\theta)^2.
\]
The aim of this section is to understand the properties of $\hat{\beta}_{\hat{M}}$ (irrespective of how $\hat{M}$ is chosen). This problem further highlights the strength of the deterministic inequality in Theorem~\ref{thm:Deterministic-Ineq} which applies irrespective of randomness of $\hat{M}$. Define for any $M\subseteq\{1,2,\ldots,d\}$, the canonical ``target'' for OLS estimator $\hat{\beta}_M$ as
\[
\beta_M := \argmin_{\theta\in\mathbb{R}^p}\,\sum_{i=1}^n\mathbb{E}\left[(Y_i - X_{i,M}^{\top}\theta)^2\right].
\]
Also, define $\mathcal{D}_M^{\Sigma} := \|\Sigma_M^{-1/2}\hat{\Sigma}_M\Sigma_M^{-1/2} - I_{|M|}\|_{op}.$
where recall $\Sigma_M$ (and $\hat{\Sigma}_M$) represents the submatrix of $\Sigma$ (and $\hat{\Sigma}$). Recall $(t)_+ := \max\{0, t\}$.
\begin{cor}\label{cor:Uniform-in-Submodel}
For any $\hat{M}$, we have
\[
\big\|\hat{\beta}_{\hat{M}} - \beta_{\hat{M}} - \Sigma_{\hat{M}}^{-1}(\hat{\Gamma}_{\hat{M}} - \hat{\Sigma}_{\hat{M}}\beta_{\hat{M}})\big\|_{\Sigma_{\hat{M}}} ~\le~ \frac{\mathcal{D}^{\Sigma}_{\hat{M}}}{1 - \mathcal{D}^{\Sigma}_{\hat{M}}}\big\|\Sigma_{\hat{M}}^{-1}(\hat{\Gamma}_{\hat{M}} - \hat{\Sigma}_{\hat{M}}\beta_{\hat{M}})\big\|_{\Sigma_{\hat{M}}}.
\]
More generally, for all $M\subseteq\{1,2,\ldots,d\}$ (simultaneously), we have
\begin{equation}\label{eq:Inf-Expansion-sub-model}
\big\|\hat{\beta}_{M} - \beta_{M} - \Sigma_{M}^{-1}(\hat{\Gamma}_{M} - \hat{\Sigma}_{M}\beta_{M})\big\|_{\Sigma_{M}} ~\le~ \frac{\mathcal{D}_M^{\Sigma}}{(1 - \mathcal{D}_M^{\Sigma})_+}\big\|\Sigma_{M}^{-1}(\hat{\Gamma}_{M} - \hat{\Sigma}_{M}\beta_{M})\big\|_{\Sigma_{M}}.
\end{equation}
\end{cor}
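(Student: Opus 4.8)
The plan is to recognize that the least-squares fit restricted to the covariates in $M$ is itself a full OLS problem, only in dimension $|M|$ with design vectors $X_{i,M}$ in place of $X_i$, so that Theorem~\ref{thm:Deterministic-Ineq} applies after the substitution $d\mapsto|M|$. The crucial algebraic observation is that the principal submatrix $\hat{\Sigma}_M$ and the subvector $\hat{\Gamma}_M$ are exactly the Gram matrix and cross-moment vector of the reduced design: since $\hat{\Sigma} = n^{-1}\sum_i X_iX_i^{\top}$, the entry indexed by $(j,k)$ with $j,k\in M$ equals $n^{-1}\sum_i X_{i,j}X_{i,k}$, whence $\hat{\Sigma}_M = n^{-1}\sum_i X_{i,M}X_{i,M}^{\top}$, and likewise $\hat{\Gamma}_M = n^{-1}\sum_i X_{i,M}Y_i$. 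Consequently $\hat{\beta}_M$, being the minimizer of $\sum_i (Y_i - X_{i,M}^{\top}\theta)^2$, satisfies its own normal equations $\hat{\Sigma}_M\hat{\beta}_M = \hat{\Gamma}_M$, which is precisely the starting point of the proof of Theorem~\ref{thm:Deterministic-Ineq}.

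With this identification, I would apply Theorem~\ref{thm:Deterministic-Ineq} to the triple $(\Sigma_M, \beta_M, \hat{\beta}_M)$ in dimension $|M|$. Two hypotheses must be checked. First, the theorem requires $\Sigma_M$ to be symmetric and invertible so that $\Sigma_M^{-1/2}$ is well defined; symmetry is immediate for a principal submatrix of a symmetric matrix, and if $\Sigma$ is positive definite (as the canonical choice $\Sigma = \mathbb{E}[\hat{\Sigma}]$ is whenever invertible) then every principal submatrix $\Sigma_M$ is positive definite as well, by restricting the quadratic form $v^{\top}\Sigma v$ to vectors supported on $M$. Second, $\beta_M$ may be any vector in $\mathbb{R}^{|M|}$, and in particular the canonical target is admissible. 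The quantity $\mathcal{D}_M^{\Sigma} = \|\Sigma_M^{-1/2}\hat{\Sigma}_M\Sigma_M^{-1/2} - I_{|M|}\|_{op}$ is then exactly the analogue of $\mathcal{D}^{\Sigma}$ from~\eqref{eq:D-Sigma-Definition} for the reduced problem, so the linear representation bound~\eqref{eq:Linear-Representation} reads verbatim as~\eqref{eq:Inf-Expansion-sub-model}. This establishes the inequality for each fixed $M$.

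The remaining point is the extension to all $M$ simultaneously and to the data-dependent $\hat{M}$, and here the deterministic character of Theorem~\ref{thm:Deterministic-Ineq} does all the work: the inequality~\eqref{eq:Inf-Expansion-sub-model} is a pathwise statement that holds for the realized sample without any probabilistic qualification, hence on the same sample point for every one of the finitely many subsets $M\subseteq\{1,\ldots,d\}$ at once, and in particular for the random index set $\hat{M}$ evaluated at that sample point, no matter how $\hat{M}$ was selected. The argument never invokes independence between $\hat{M}$ and the data, which is exactly why the post-selection guarantee comes for free. I expect no substantive obstacle; the only points demanding care are the algebraic identification of $\hat{\Sigma}_M,\hat{\Gamma}_M$ with the reduced-problem quantities and the inheritance of positive definiteness by principal submatrices, both elementary. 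The conceptual content is carried entirely by Theorem~\ref{thm:Deterministic-Ineq}, of which this corollary is a reparametrization.
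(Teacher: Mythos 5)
Your proposal is correct and coincides with the paper's own argument: the paper simply observes that the corollary ``follows immediately from Theorem~\ref{thm:Deterministic-Ineq}'' applied to the reduced OLS problem in dimension $|M|$, with the deterministic, pathwise nature of that theorem delivering the simultaneous statement over all $M$ and hence over any data-dependent $\hat{M}$. Your added checks (that $\hat{\Sigma}_M,\hat{\Gamma}_M$ are exactly the Gram matrix and cross-moment vector of the reduced design, and that principal submatrices inherit positive definiteness) are the right details to verify and introduce no deviation from the paper's route.
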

Corollary~\ref{cor:Uniform-in-Submodel} follows immediately from Theorem~\ref{thm:Deterministic-Ineq} and for simplicity it is stated with $\Sigma, \beta$ choices in~\eqref{eq:Canonical-choice} but other choices are possible. The first inequality in the corollary proves an influence function type expansion for the estimator $\hat{\beta}_{\hat{M}}$ around (a possibly random) target vector $\beta_{\hat{M}}$. In order to prove convergence of the remainder in this expansion to zero, one needs to control $\mathcal{D}_{\hat{M}}$ which can be a bit complicated to deal with directly. With some information on how ``strongly'' dependent $\hat{M}$ is on the data, such a direct approach can be worked out; see~\citet[Proposition 1]{pmlr-v51-russo16},~\cite{jiao2017generalizations}. If no information other than the fact that $\hat{M}\in\mathcal{M}$ for some set, $\mathcal{M}$, of subsets of covariates, then we have
\begin{equation}\label{eq:Union-bound}
\mathcal{D}_{\hat{M}} ~\le~ U_{\hat{M}}~\times~\max_{M\in\mathcal{M}}\,\frac{\mathcal{D}_M}{U_M},
\end{equation} 
for any set of (non-stochastic) numbers $\{U_M:\,M\in\mathcal{M}\}$; $U_M$ usually converges to zero at rate $\sqrt{|M|\log(ed/|M|)/n}$; see Proposition~\ref{prop:Rates-D_M-Gamma_M}. Some examples of $\mathcal{M}$ include $\mathcal{M}_{\le k} := \{M\subseteq\{1,\ldots,d\}:1 \le |M| \le k\},\;\mathcal{M}_{= k} := \{M\subseteq\{1,\ldots,d\}:|M| = k\},$ for some $k\ge1$. Note that the maximum on the right hand side of~\eqref{eq:Union-bound} is random only through $\hat{\Sigma}$ (dissolving the randomness in $\hat{M}$ into the maximum over $\mathcal{M}$). We will take this indirect approach in our study since we do not want to make any assumption on how the model $\hat{M}$ is obtained which might as well be adversarial. Further note that~\eqref{eq:Union-bound} is tight (in that it cannot be improved) in an agnostic setting since one can take $\hat{M}$ such that $\mathcal{D}_{\hat{M}}/U_{\hat{M}} = \max_{M\in\mathcal{M}}\,\mathcal{D}_M/U_M$. We take the same indirect approach to bound $\|\Sigma_M^{-1}(\hat{\Gamma}_M - \hat{\Sigma}_M\beta_M)\|_{\Sigma_M}$ over $M\in\mathcal{M}$. These bounds prove consistency and linear representation error bounds for the OLS estimator under variable selection. Similar results can be derived for other modifications of OLS estimator such as transformations.
\subsection[Consistency under Variable Selection]{Consistency of $\hat{\beta}_{\hat{M}}$} From Corollary~\ref{cor:Uniform-in-Submodel} it is easy to prove the following corollary (similar to Corollary~\ref{cor:Consistency}) for consistency.
\begin{cor}[Consistency of $\hat{\beta}_{\hat{M}}$]\label{cor:Consistency-variable-selection}
If $\mathcal{D}_{\hat{M}}^{\Sigma} < 1$ and $\|\Sigma_{\hat{M}}^{-1}(\hat{\Gamma}_{\hat{M}} - \hat{\Sigma}_{\hat{M}}\beta_{\hat{M}})\| \to 0$ in probability, then $\|\hat{\beta}_{\hat{M}} - \beta_{\hat{M}}\|_{\Sigma_{\hat{M}}}$ converges to zero in probability.
\end{cor}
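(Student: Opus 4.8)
The plan is to obtain this as a realization-wise consequence of Theorem~\ref{thm:Deterministic-Ineq} (equivalently, of the first inequality of Corollary~\ref{cor:Uniform-in-Submodel}), mirroring how Corollary~\ref{cor:Consistency} was read off from~\eqref{eq:Consistency}, and then to upgrade the resulting deterministic bound to a statement about convergence in probability. The key structural point is that Theorem~\ref{thm:Deterministic-Ineq} is a pointwise inequality valid for \emph{every} symmetric matrix and target vector. Applying it with $(\Sigma,\beta,\hat\Sigma,\hat\Gamma,d)$ replaced by $(\Sigma_M,\beta_M,\hat\Sigma_M,\hat\Gamma_M,|M|)$ for each fixed $M$, and then evaluating at the realized random model $M=\hat M(\omega)$, is legitimate: for each outcome $\omega$ the inequality is being used for a single fixed subset (invertibility of $\Sigma_{\hat M}$ being implicit in the very appearance of $\Sigma_{\hat M}^{-1}$ and $\mathcal{D}^{\Sigma}_{\hat M}$). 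This is exactly the robustness-to-the-randomness-of-$\hat M$ feature emphasized after Corollary~\ref{cor:Uniform-in-Submodel}.

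First I would record the submodel analogue of the upper bound in~\eqref{eq:Consistency}. Writing $\hat\beta_{\hat M}-\beta_{\hat M}=\big[\hat\beta_{\hat M}-\beta_{\hat M}-\Sigma_{\hat M}^{-1}(\hat\Gamma_{\hat M}-\hat\Sigma_{\hat M}\beta_{\hat M})\big]+\Sigma_{\hat M}^{-1}(\hat\Gamma_{\hat M}-\hat\Sigma_{\hat M}\beta_{\hat M})$ and applying the triangle inequality together with the first inequality of Corollary~\ref{cor:Uniform-in-Submodel} gives, on the event $\{\mathcal{D}^{\Sigma}_{\hat M}<1\}$,
\[
\|\hat\beta_{\hat M}-\beta_{\hat M}\|_{\Sigma_{\hat M}} ~\le~ \frac{1}{1-\mathcal{D}^{\Sigma}_{\hat M}}\,\big\|\Sigma_{\hat M}^{-1}(\hat\Gamma_{\hat M}-\hat\Sigma_{\hat M}\beta_{\hat M})\big\|_{\Sigma_{\hat M}},
\]
since $1+\mathcal{D}^{\Sigma}_{\hat M}(1-\mathcal{D}^{\Sigma}_{\hat M})^{-1}=(1-\mathcal{D}^{\Sigma}_{\hat M})^{-1}$. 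This reproduces the right inequality of~\eqref{eq:Consistency} at the submodel level, so no fresh algebra is needed.

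The remaining work is to pass from this deterministic bound to convergence in probability, and this is where the only genuine care is required. Two points must be handled. The gradient term in the bound carries the norm $\|\cdot\|_{\Sigma_{\hat M}}$, whereas the hypothesis is phrased with the Euclidean norm; I would reconcile them through $\|w\|_{\Sigma_{\hat M}}\le\|\Sigma_{\hat M}\|_{op}^{1/2}\|w\|\le\|\Sigma\|_{op}^{1/2}\|w\|$, the last step because $\Sigma_{\hat M}$ is a principal submatrix of the positive semi-definite $\Sigma$ and hence has no larger operator norm, so the hypothesized Euclidean convergence transfers to $\Sigma_{\hat M}$-norm convergence whenever $\|\Sigma\|_{op}$ stays bounded. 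The more delicate point is the multiplier $(1-\mathcal{D}^{\Sigma}_{\hat M})^{-1}$, which is random and could in principle blow up; ``$\mathcal{D}^{\Sigma}_{\hat M}<1$'' must therefore be read as $\mathcal{D}^{\Sigma}_{\hat M}$ being bounded away from $1$ with probability tending to one, i.e.\ there is $\delta\in(0,1)$ with $\mathbb{P}(\mathcal{D}^{\Sigma}_{\hat M}\le 1-\delta)\to1$. On that event the multiplier is at most $\delta^{-1}$, so the right-hand side is a bounded-in-probability factor times an $o_p(1)$ factor, hence $o_p(1)$; a standard splitting of the probability according to this high-probability event then yields $\|\hat\beta_{\hat M}-\beta_{\hat M}\|_{\Sigma_{\hat M}}\to 0$ in probability. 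I expect the control of this multiplier---making precise that the product of a possibly-unbounded random factor with a vanishing term still vanishes in probability---to be the main, if modest, obstacle; everything else is a transcription of the fixed-model argument to the submodel indexed, possibly adversarially, by $\hat M$.
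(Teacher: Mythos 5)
Your proof is correct and follows essentially the same route the paper intends: the paper gives no explicit argument for this corollary, simply asserting that it follows from Corollary~\ref{cor:Uniform-in-Submodel} in the same way Corollary~\ref{cor:Consistency} followed from the upper bound in~\eqref{eq:Consistency}, which is precisely your realization-wise application of Theorem~\ref{thm:Deterministic-Ineq} to the submodel indexed by $\hat M$. Your two additional observations --- that the Euclidean norm in the hypothesis must be reconciled with the $\|\cdot\|_{\Sigma_{\hat M}}$-norm in the bound (via boundedness of $\|\Sigma\|_{op}$), and that ``$\mathcal{D}^{\Sigma}_{\hat M}<1$'' must be read as bounded away from $1$ with probability tending to one so that the factor $(1-\mathcal{D}^{\Sigma}_{\hat M})^{-1}$ stays bounded in probability --- are points the paper glosses over, and your handling of them is the right one.
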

The conditions of Corollary~\ref{cor:Consistency-variable-selection} are reasonable and can be shown to hold under various dependence settings; see~\cite{Uniform:Kuch18}. Under these conditions, we get that $\hat{\beta}_{\hat{M}}$ ``converges'' to $\beta_{\hat{M}}$ and hence under reasonable conditions, it is only possible to perform consistent asymptotic inference only for $\beta_{\hat{M}}$ based on $\hat{\beta}_{\hat{M}}$. In other words, if a confidence region is constructed for a parameter $\eta$ centered at $\hat{\beta}_{\hat{M}}$ and that such region becomes a singleton asymptotically then $\|\eta - \beta_{\hat{M}}\|$ should converge to zero. In relation to the well-known consistent model selection literature, we can say if a claim is made about inference for $\beta_{M_0}$ (for $M_0$ the true support) then $\|\beta_{\hat{M}} - \beta_{M_0}\|$ should converge to zero asymptotically.
\subsection{Normal Approximation: Berry--Esseen result} 
% As mentioned before, it is difficult to work with $\mathcal{D}_{\hat{M}}^{\Sigma}$ for a randomly selected $\hat{M}$ and is equally difficult for $\|\Sigma_{\hat{M}}^{-1}(\hat{\Gamma}_{\hat{M}} - \hat{\Sigma}_{\hat{M}}\beta_{\hat{M}})\|_{\Sigma_{\hat{M}}}$. 
From Corollary~\ref{cor:Uniform-in-Submodel} (if $\mathcal{D}_{\hat{M}}^{\Sigma} \to 0$), we have
\begin{equation}\label{eq:Linear-Approx-Variable-Selection}
\hat{\beta}_{\hat{M}} - \beta_{\hat{M}} ~\approx~ \Sigma_{\hat{M}}^{-1}(\hat{\Gamma}_{\hat{M}} - \hat{\Sigma}_{\hat{M}}\beta_{\hat{M}}),
\end{equation}
and hence inference for $\beta_{\hat{M}}$ requires understanding the asymptotic distribution of $\Sigma_{\hat{M}}^{-1}(\hat{\Gamma}_{\hat{M}} - \hat{\Sigma}_{\hat{M}}\beta_{\hat{M}})$ which is an average indexed by a random model $\hat{M}$. The impossibility results of Leeb and P{\"o}tscher~\citep{leeb2008can} imply that one cannot (uniformly) consistently estimate the asymptotic distribution of the right hand side of~\eqref{eq:Linear-Approx-Variable-Selection}. Hence the approach we take for inference is as follows: if we know apriori that $\hat{M}$ belongs on $\mathcal{M}$ either with probability $1$ or with probability approaching 1, then by simultaneously inferring about $\beta_M$ over all $M\in\mathcal{M}$ we can perform inference about $\beta_{\hat{M}}$. This is necessarily a conservative approach for any particular variable selection procedure leading to ($\hat{M}$ or) $\beta_{\hat{M}}$ but over all random models $\hat{M}\in\mathcal{M}$, this procedure is exact (or non-conservative); see~\citet[Theorem 3.1]{kuchibhotla2018valid}. We acheive this simultaneous inference by using high-dimensional normal approximation results for averages of random vectors. Based on Corollary~\ref{cor:Uniform-in-Submodel}, we prove the following corollary (similar to Corollary~\ref{cor:BEOLS}). 

Because of the finite sample nature (not requiring any specific structure), the result is cumbersome and requires some notation. We first briefly describe the method of proof of corollary to make the notation and result clear. We have already proved~\eqref{eq:Inf-Expansion-sub-model} for all $M\in\mathcal{M}$. Since Euclidean norm majorizes the maximum norm,
\[
\max_{1\le j\le |M|}|(\hat{\beta}_M - \beta_M)_j - (\Sigma_M^{-1}(\hat{\Gamma}_M - \hat{\Sigma}_M\beta_M))_j| \lesssim \frac{\mathcal{D}_M^{\Sigma}\|\Sigma_M^{-1}(\hat{\Gamma}_M - \hat{\Sigma}_M\beta_M)\|_{\Sigma_M}}{(1 - \mathcal{D}_M^{\Sigma})_+}. 
\]
Here we write $\lesssim$ since scaled Euclidean norm leads to other constant factors. We can use CLT for $(\Sigma_M^{-1}(\hat{\Gamma}_M - \hat{\Sigma}_M\beta_M))_{M\in\mathcal{M}}$ to compare $(\hat{\beta}_M - \beta_M)_{M\in\mathcal{M}}$ to a Gaussian counterpart. The CLT error term for the averages $(\Sigma_M^{-1}(\hat{\Gamma}_M - \hat{\Sigma}_M\beta_M))_{M\in\mathcal{M}}$ is defined as $\Delta_{n,\mathcal{M}}$. Here we also note that $\hat{\beta}_M - \beta_M$ is only close to the average upto an error term on the right hand side. This leads to two terms: first we need to show the right hand side term is indeed small for which we use CLT for scaled Euclidean norm (leading to $\Xi_{n,\mathcal{M}}$ below) and secondly, we need to account for closeness upto this small error which appears as probability of Gaussian process belonging in a small strip (leading to an anti-concentration term in the bound).

% Let us start with some notation. 
% For any covariance matrix $B$, let $\texttt{diag}(B)$ represent the diagonal matrix with the diagonal matching the diagonal of $B$ and let $\texttt{corr}(B)$ be defined as
% $\texttt{corr}(B) := (\texttt{diag}(B))^{-1/2}B(\texttt{diag}(B))^{-1/2}.$
Now some notation. Let $V_M$ represent the version of $V$ in~\eqref{eq:V-definition} for model $M$,
\begin{equation}
%\label{eq:VM-definition}
\textstyle
V_M ~:=~ \mbox{Var}\left(n^{-1}\sum_{i=1}^n X_{i,M}(Y_i - X_{i,M}^{\top}\beta_M)\right).
\end{equation}
Note that $V_M = O(n^{-1})$, in general. Define the Gaussian process $(G_{M,j})_{M\in\mathcal{M},1\le j\le |M|}$ with mean zero and the covariance operator given by: $\mbox{Cov}(G_{M,j}, G_{M',j'})$ equals
\begin{equation}
\mbox{Cov}\left(\frac{1}{n}\sum_{i=1}^n \frac{(\Sigma_M^{-1}X_{i,M})_j(Y_i - X_{i,M}^{\top}\beta_M)}{{(\Sigma_M^{-1}V_M\Sigma_M^{-1})_j}^{1/2}},\, \frac{1}{n}\sum_{i=1}^n \frac{(\Sigma_{M'}^{-1}X_{i,{M'}})_{j'}(Y_i - X_{i,M'}^{\top}\beta_{M'})}{{(\Sigma_{M'}^{-1}V_{M'}\Sigma_{M'}^{-1})_{j'}}^{1/2}}\right),
\end{equation}
for all $M,M'\in\mathcal{M}$ and $1\le j\le |M|, 1\le j'\le |M'|$. Note $(G_{M,j})$ depends on $n$ but the marginal variances are all $1$. Let $\pi_s$ for $1\le s \le d$ represent the proportion of models of size $s$ in $\mathcal{M}$, that is, $\pi_s := \#\{M\in\mathcal{M}:|M| = s\}/|\mathcal{M}|.$
Now set $D := \sum_{M\in\mathcal{M}} 5^{|M|}$ and define
\[
\Xi_{n,\mathcal{M}} := \sup_{a\in\mathbb{R}_+^{D}}\,\left|\mathbb{P}\left(\left(\theta^{\top}V_M^{-\frac{1}{2}}(\hat{\Gamma}_M - \hat{\Sigma}_M\beta_M)\right)_{\substack{M\in\mathcal{M},\\\theta\in\mathcal{N}_{|M|}^{1/2}}} \preceq a\right) - \mathbb{P}\left(\left(\theta^{\top}\bar{G}_M\right)_{\substack{M\in\mathcal{M},\\\theta\in\mathcal{N}_{|M|}^{1/2}}}\preceq a\right)\right|,
\]
where $\preceq$ represents the vector coordinate-wise inequality, $\mathcal{N}_{|M|}^{1/2}$ represents the $1/2$-net of $\{\theta\in\mathbb{R}^{|M|}:\,\|\theta\| \le 1\}$, that is, $\min_{\theta'\in\mathcal{N}_{|M|}^{1/2}}\max_{\theta\in\mathbb{R}^{|M|}:\,\|\theta\| = 1}\,\|\theta - \theta'\| \le 1/2,$
and $(\bar{G}_M)_{M\in\mathcal{M}}$ represents a Guassian process that has mean zero and shares the same covariance structure as $(V_M^{-1/2}(\hat{\Gamma}_M - \hat{\Sigma}_M\beta_M))_{M\in\mathcal{M}}$. Note that $\mbox{Var}(G_M) = I_{|M|}$ for any $M\in\mathcal{M}$. The quantity~$\Xi_{n,\mathcal{M}}$ helps control one of the remainder factors, $\|\Sigma_M^{-1}(\hat{\Gamma}_M - \hat{\Sigma}_M\beta_M)\|_{\Sigma_MV_M^{-1}\Sigma_M}$. For the main term, define $C := \sum_{M\in\mathcal{M}}|M|$ and
\[\textstyle
\Delta_{n,\mathcal{M}} := \sup_{a\in\mathbb{R}_+^{C}}\,\left|\mathbb{P}\left(\left(\frac{|(\Sigma_M^{-1}(\hat{\Gamma}_M - \hat{\Sigma}_M\beta_M))_j|}{{(\Sigma_M^{-1}V_M\Sigma_M^{-1})_j}^{1/2}}\right)_{\substack{M\in\mathcal{M},\\1\le j\le |M|}} \preceq a\right) - \mathbb{P}\left((|G_{M,j}|)_{\substack{M\in\mathcal{M},\\1\le j\le |M|}} \preceq a\right)\right|.
\]
\begin{cor}\label{cor:BEOLS-Varaible-Selection}
For all $M\subseteq\{1,2,\ldots,d\}$, we have
\[
\|\hat{\beta}_M - \beta_M - \Sigma_M^{-1}(\hat{\Gamma}_M - \hat{\Sigma}_M\beta_M)\|_{\Sigma_MV_M^{-1}\Sigma_M} \le \frac{\mathcal{D}^{\Sigma}_M\|\Sigma_M^{-1}(\hat{\Gamma}_M - \hat{\Sigma}_M\beta_M)\|_{\Sigma_MV_M^{-1}\Sigma_M}}{(1 - \mathcal{D}_M^{\Sigma})_+}.
\]
Furthermore, for any $(\eta_M)_{M\in\mathcal{M}} \preceq 1/2$, we have
\begin{align}\label{eq:Normal-Approx-Variable-Selection}
\begin{split}
&\sup_{a\in\mathbb{R}_+^{C}}\,\left|\mathbb{P}\left(\left(\frac{|(\hat{\beta}_M - \beta_M)_j|}{{(\Sigma_M^{-1}V_M\Sigma_M^{-1})_j}^{1/2}}\right)_{\substack{M\in\mathcal{M},\\1\le j\le |M|}} \preceq a\right) - \mathbb{P}\left((|G_{M,j}|)_{\substack{M\in\mathcal{M},\\1\le j\le |M|}} \preceq a\right)\right|\\
&\quad\le \Delta_{n,\mathcal{M}} + 2.65\Xi_{n,M} + \mathbb{P}\left(\max_{M\in\mathcal{M}}{\mathcal{D}_M^{\Sigma}}/{\eta_M} \ge 1\right)\\
&\quad\quad+ \sup_{a\in\mathbb{R}_+^{C}}\mathbb{P}\left(\bigcup_{\substack{M\in\mathcal{M},\\1\le j\le |M|}}\left\{||G_{M,j}| - a_{M,j}| \le {4\eta_M\sqrt{2\log\left(\frac{|\mathcal{M}|\pi_{|M|}5^{2|M|}}{\Xi_{n,M}}\right)}}\right\}\right).
\end{split}
\end{align}
\end{cor}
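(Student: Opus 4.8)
The statement bundles a deterministic inequality with a normal-approximation bound; I would treat the first as the engine and concentrate the effort on the second. The deterministic inequality is the submodel analogue of~\eqref{eq:Linear-Representation}: I would replay the proof of Theorem~\ref{thm:Deterministic-Ineq} verbatim for the covariates in $M$, starting from the submodel normal equations $\hat{\Sigma}_M\hat{\beta}_M = \hat{\Gamma}_M$ and measuring the remainder $r_M := \hat{\beta}_M - \beta_M - \Sigma_M^{-1}(\hat{\Gamma}_M - \hat{\Sigma}_M\beta_M)$ in the $\|\cdot\|_{\Sigma_M V_M^{-1}\Sigma_M}$ geometry rather than in $\|\cdot\|_{\Sigma_M}$; the operator $I_{|M|} - \Sigma_M^{-1/2}\hat{\Sigma}_M\Sigma_M^{-1/2}$ again produces the factor $\mathcal{D}_M^{\Sigma}$ and the bound $\|\hat{\beta}_M - \beta_M\|_{\Sigma_M}\le(1-\mathcal{D}_M^\Sigma)_+^{-1}\|\Sigma_M^{-1}(\hat{\Gamma}_M-\hat{\Sigma}_M\beta_M)\|_{\Sigma_M}$ is reused. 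The one algebraic point that needs care is that passing to the full Mahalanobis norm $\Sigma_M V_M^{-1}\Sigma_M$ takes the operator norm in the reweighted geometry. With the deterministic inequality in hand I would then extract a coordinatewise statement: since $|x_j|/(\Sigma_M^{-1}V_M\Sigma_M^{-1})_j^{1/2}\le\|x\|_{\Sigma_M V_M^{-1}\Sigma_M}$ by Cauchy--Schwarz, the standardized remainder coordinates $R_{M,j}:=(r_M)_j/(\Sigma_M^{-1}V_M\Sigma_M^{-1})_j^{1/2}$ satisfy $\max_j|R_{M,j}|\le\frac{\mathcal{D}_M^\Sigma}{(1-\mathcal{D}_M^\Sigma)_+}\|\Sigma_M^{-1}(\hat{\Gamma}_M - \hat{\Sigma}_M\beta_M)\|_{\Sigma_M V_M^{-1}\Sigma_M}$.

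For the normal-approximation bound I would compare, model by model and coordinate by coordinate, the standardized estimator $\bar{T}_{M,j}:=(\hat{\beta}_M-\beta_M)_j/(\Sigma_M^{-1}V_M\Sigma_M^{-1})_j^{1/2}$ with its linear part $\bar{L}_{M,j}:=(\Sigma_M^{-1}(\hat{\Gamma}_M-\hat{\Sigma}_M\beta_M))_j/(\Sigma_M^{-1}V_M\Sigma_M^{-1})_j^{1/2}$, using $\bar{T}_{M,j}-\bar{L}_{M,j}=R_{M,j}$. I would introduce two good events. On $A_1:=\{\max_{M}\mathcal{D}_M^\Sigma/\eta_M<1\}$ one has $\mathcal{D}_M^\Sigma<\eta_M\le 1/2$, hence $\mathcal{D}_M^\Sigma/(1-\mathcal{D}_M^\Sigma)_+\le 2\eta_M$ and $\max_j|R_{M,j}|\le 2\eta_M\|V_M^{-1/2}(\hat{\Gamma}_M-\hat{\Sigma}_M\beta_M)\|$. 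Because the norm on the right is still random, I would discretize it: a $1/2$-net $\mathcal{N}_{|M|}^{1/2}$ of the unit ball, of cardinality $\le 5^{|M|}$ (whence the ambient dimension $D=\sum_M 5^{|M|}$ of $\Xi_{n,\mathcal{M}}$), gives $\|V_M^{-1/2}(\hat{\Gamma}_M-\hat{\Sigma}_M\beta_M)\|\le 2\max_{\theta\in\mathcal{N}_{|M|}^{1/2}}\theta^{\top} V_M^{-1/2}(\hat{\Gamma}_M-\hat{\Sigma}_M\beta_M)$; I define $A_2$ to be the event that this net-maximum stays below $\tau_M/2$ for every $M$, so that on $A_1\cap A_2$ the remainder is dominated by the deterministic radius $\rho_M:=2\eta_M\tau_M$.

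The comparison chain then runs $\bar{T}\to\bar{L}\to|G|$. On $A_1\cap A_2$ the coordinatewise inclusion $\{(|\bar{T}_{M,j}|)\preceq a\}\subseteq\{(|\bar{L}_{M,j}|)\preceq a+\rho\}$ (and the reverse with $a-\rho$) holds, so $\mathbb{P}((|\bar{T}_{M,j}|)\preceq a)\le\mathbb{P}((|\bar{L}_{M,j}|)\preceq a+\rho)+\mathbb{P}(A_1^c)+\mathbb{P}(A_2^c)$. The definition of $\Delta_{n,\mathcal{M}}$ passes the linear parts to $(|G_{M,j}|)$, and $\mathbb{P}((|G_{M,j}|)\preceq a+\rho)\le\mathbb{P}((|G_{M,j}|)\preceq a)+\sup_a\mathbb{P}\big(\bigcup_{M,j}\{\,||G_{M,j}|-a_{M,j}|\le\rho_M\}\big)$, which is the stated anti-concentration term; the term $\mathbb{P}(A_1^c)=\mathbb{P}(\max_M\mathcal{D}_M^\Sigma/\eta_M\ge 1)$ appears directly. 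For $\mathbb{P}(A_2^c)$ I would transfer through $\Xi_{n,\mathcal{M}}$: since $\bar{G}_M$ shares the covariance of $V_M^{-1/2}(\hat{\Gamma}_M-\hat{\Sigma}_M\beta_M)$ and $\mathrm{Var}(\bar{G}_M)=I_{|M|}$, each $\theta^{\top}\bar{G}_M$ is Gaussian with variance $\le 1$; a union bound over the $\le 5^{|M|}$ net points and the $|\mathcal{M}|\pi_{|M|}$ models of each size, with the calibration $\tau_M=2\sqrt{2\log(|\mathcal{M}|\pi_{|M|}5^{2|M|}/\Xi_{n,\mathcal{M}})}$, collapses the Gaussian tail to at most $\tfrac14\Xi_{n,\mathcal{M}}$ via $\sum_{s}5^{-s}\le\tfrac14$. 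Adding the $\Xi_{n,\mathcal{M}}$ paid by the transfer, and accounting for the two-sided argument and the net-doubling factor, yields $2.65\,\Xi_{n,\mathcal{M}}$ and makes $\rho_M=4\eta_M\sqrt{2\log(\cdots)}$ match the radius in the anti-concentration term.

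The main obstacle is precisely this calibration: the remainder bound is random through $\|V_M^{-1/2}(\hat{\Gamma}_M-\hat{\Sigma}_M\beta_M)\|$, and converting it into a fixed shift $\rho_M$ uniformly over the (possibly exponentially many) models in $\mathcal{M}$ without inflating the error past $O(\Xi_{n,\mathcal{M}})$ is what forces the net reduction to dimension $D$, the precise threshold $\tau_M$, and the telescoping sum $\sum_s 5^{-s}$. Everything else — the deterministic inequality, the Cauchy--Schwarz coordinate extraction, and the two one-line Gaussian swaps licensed by $\Delta_{n,\mathcal{M}}$ and $\Xi_{n,\mathcal{M}}$ — is routine once this is set up, and the anti-concentration probability is deliberately carried as a term rather than bounded.
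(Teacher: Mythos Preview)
Your approach is correct and is essentially the paper's: the same deterministic inequality in the weighted norm, the same coordinate extraction, the same $1/2$-net reduction, the same two-part good event, and the same $\bar T\to\bar L\to|G|$ comparison chain. Two minor points of divergence are worth noting. First, for the deterministic inequality the paper does not ``reuse'' the $\|\cdot\|_{\Sigma_M}$ bound but rather replays the entire triangle-inequality argument in the $\|\cdot\|_{\Sigma_MV_M^{-1}\Sigma_M}$ norm, the key observation being that $\|V_M^{-1/2}(I-\Sigma_M^{-1/2}\hat\Sigma_M\Sigma_M^{-1/2})V_M^{1/2}\|_{op}=\mathcal{D}_M^\Sigma$ via $\|AB\|_{op}=\|BA\|_{op}$; you flagged this as ``the one algebraic point that needs care,'' which is exactly right. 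Second, in the union bound over model sizes the paper weights size $s$ by $s^{-2}$, so that $\sum_{s\ge1}s^{-2}<\pi^2/6<1.65$ and $1+1.65=2.65$; you instead weight by $5^{-s}$, so $\sum_{s\ge1}5^{-s}=1/4$, which is why your threshold carries the extra $5^{|M|}$ that matches the statement's $5^{2|M|}$. With your weighting the natural constant in front of $\Xi_{n,\mathcal{M}}$ is $1+1/4=1.25$, not $2.65$, so your appeal to ``the two-sided argument and the net-doubling factor'' is not the correct explanation for the statement's constant --- it simply reflects the paper's coarser $s^{-2}$ bookkeeping (and either constant suffices for the stated bound).
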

The proof of Corollary~\ref{cor:BEOLS-Varaible-Selection} can be found in Appendix~\ref{AppSec:ProofBEOLS-Variable-Selection}. The first inequality in Corollary~\ref{cor:BEOLS-Varaible-Selection} is slightly different from the conclusion of Corollary~\ref{cor:Uniform-in-Submodel} but is more important for inference since the scaling in Corollary~\ref{cor:BEOLS-Varaible-Selection} is with respect to the ``asymptotic'' variance of $\hat{\beta}_M - \beta_M$. The second conclusion of Corollary~\ref{cor:BEOLS-Varaible-Selection} is a ``randomness-free'' version of finite sample Berry--Esseen type result for $(\hat{\beta}_M - \beta_M)$ simultaneously over all $M\in\mathcal{M}$. The terms each have a meaning and is explained before the notation above. For a simpler result, consider the case of fixed (non-stochastic) covariates. In this case $\mathcal{D}_M^{\Sigma} = 0$ for all $M$ and hence the result becomes
\[
\left|\mathbb{P}\left(\left(\frac{|(\hat{\beta}_M - \beta_M)_j|}{{(\Sigma_M^{-1}V_M\Sigma_M^{-1})_j}^{1/2}}\right)_{\substack{M\in\mathcal{M},\\1\le j\le |M|}} \preceq a\right) - \mathbb{P}\left((|G_{M,j}|)_{\substack{M\in\mathcal{M},\\1\le j\le |M|}} \preceq a\right)\right| \le \Delta_{n,\mathcal{M}} + 3\Xi_{n,\mathcal{M}},
\]
for all $a\in\mathbb{R}_+^C$ since we can take $\eta_M$ to be zero in limit. Getting back to the bound in Corollary~\ref{cor:BEOLS-Varaible-Selection}, the quantities $\Delta_{n,\mathcal{M}}$ and $\Xi_{n,\mathcal{M}}$ can be easily controlled by using high-dimensional CLT results which only depend on the number of coordinates in the vector logarithmically. In particular for $\max\{\Delta_{n,\mathcal{M}}, \Xi_{n,\mathcal{M}}\} = o(1)$ they only require $\log(\sum_{M\in\mathcal{M}}|M|) = o(n^{\gamma})$ for some $\gamma > 0$~\citep{Chern17,Chern14,ZhangWu17,Zhang14,koike2019high} for details. For instance, if $\mathcal{M}=\{M\subseteq\{1,\ldots,d\}:\,|M| \le k\}$ then the requirement becomes $k\log(ed/k) = o(n^{\gamma}).$ For the case of independent observations and sufficiently weakly dependent observations, we have
\[
\max\{\Delta_{n,\mathcal{M}}, \Xi_{n,\mathcal{M}}\} ~=~ O(1)\left(n^{-1}{\log^7\textstyle(\sum_{M\in\mathcal{M}} |M|)}\right)^{1/6}. 
\]
Bounds for $\mathbb{P}(\cup_{M\in\mathcal{M}}\{\mathcal{D}_M^{\Sigma} \ge \eta_M\})$ can be obtained using certain tail and ``weak dependence'' assumptions the covariates $X_1,\ldots,X_n$ (and as mentioned before one only needs to be concerned with the stochastic coordinates of covariates). This often necessitates exponential tails on the covariates if the total number of covariates~$d$ is allowed to grow almost exponentially with $n$~\citep{guedon2015interval,tikhomirov2017sample}. Finally the control of the anti-concentration term (the last one in Corollary~\ref{cor:BEOLS-Varaible-Selection}) only concerns a tail properties of a Gaussian process. A dimension dependent bound (that only depends logarithmically on dimension) for this probability can be found in~\citep{Naz03,chernozhukov2017detailed}:
\[\textstyle
\mathbb{P}\left(\bigcup_{{M\in\mathcal{M},1\le j\le |M|}}\left\{||G_{M,j}| - a_{M,j}| \le \varepsilon\right\}\right) \le H\varepsilon\sqrt{\log\left(\sum_{M\in\mathcal{M}}|M|\right)},
\]
for some constant $H > 0$. Dimension-free bounds for this probability exist only for some special cases~\citep{Chern15,2018arXiv180606153K}. Regarding the constant in the anti-concentration probability, note that $\pi_{|M|}|\mathcal{M}| \le (ed/|M|)^{|M|}$ for any collection $\mathcal{M}$and hence $\log(|M|\pi_{|M|}5^{2|M|}/\Xi_{n,\mathcal{M}}) \le |M|\log(25ed/\{|M|\Xi_{n,\mathcal{M}}\})$.
%%%%%%%%%%%%%%%%%%%%%%%%
%%%%%%%%%%%%%%%%%%%%%%%%
\subsection{Inference under Variable Selection}\label{subsec:Inference-OLS-Variable-Selection} Suppose that we can find $(\eta_M)_{M\in\mathcal{M}}$ such that $\mathbb{P}(\cup_{M\in\mathcal{M}}\{\mathcal{D}_M^{\Sigma} \ge \eta_M\})$ and the anti-concentration term goes to zero, then from Corollary~\ref{cor:BEOLS-Varaible-Selection} we get that
\[
\mathbb{P}\left(\left((\Sigma_M^{-1}V_M\Sigma_M^{-1})_j^{-1/2}{|(\hat{\beta}_M - \beta_M)_j|}\right)_{\substack{M\in\mathcal{M},\\1\le j\le |M|}} \preceq a\right) ~\approx~ \mathbb{P}\left((|G_{M,j}|)_{\substack{M\in\mathcal{M},\\1\le j\le |M|}} \preceq a\right),
\] 
uniformly for all $a\in\mathbb{R}^{\sum_{M\in\mathcal{M}}|M|}$. In order to perform inference (or in particular confidence regions) one can choose a vector $a = a_{\alpha}$ such that
\begin{equation}\label{eq:Quantile-Gaussian-Process}
\mathbb{P}\left((|G_{M,j}|)_{\substack{M\in\mathcal{M},\\1\le j\le |M|}} \preceq a_{\alpha}\right) = 1 - \alpha.
\end{equation}
This implies that for any $\hat{M}\in\mathcal{M}$ chosen (possibly) randomly based on the data,
\[
\mathbb{P}\left(\left({(\Sigma_{\hat{M}}^{-1}V_{\hat{M}}\Sigma_{\hat{M}}^{-1})_j}^{-1/2}{|(\hat{\beta}_{\hat{M}} - \beta_{\hat{M}}|)_j}\right)_{1\le j\le |\hat{M}|} \le (a_{\alpha})_{\hat{M}}\right) \ge 1 - \alpha + o(1),
\]
asymptotically. This means that with (asymptotic) probability of at least $1 - \alpha$, $\beta_{\hat{M},j}$ belongs in the interval $[\hat{\beta}_{\hat{M},j} \pm (a_{\alpha})_{\hat{M},j}{(\Sigma_{\hat{M}}^{-1}V_{\hat{M}}\Sigma_{\hat{M}}^{-1})_j}^{1/2}]$ simultaneously for all $1\le j\le |\hat{M}|$. If no variable selection is involved and no simultaneity over $1\le j\le |\hat{M}|$ is required, then $(a_{\alpha})_{\hat{M},j}$ would just be $z_{\alpha/2}$ (the usual normal quantile for a $(1-\alpha)$-confidence interval). This is the essential point of post-selection inference wherein we enlarge the usual confidence intervals to make them simultaneous. 

The above discussion completes inference for the OLS estimator under variable selection for all types of observations (that allow for a CLT: $\Delta_{n,\mathcal{M}}\asymp\Xi_{n,\mathcal{M}}\asymp 0$) except for two important points: firstly, we have proved the CLT result with the true ``asymptotic'' variance $\Sigma_M^{-1}V_M\Sigma_M^{-1}$ which is unknown in general; it is, however, easy to estimate this variance using the techniques described in Section~\ref{sec:Statistical-Inference}. Secondly and more importantly, there are infinitely many different choices of $a_{\alpha}$ satisfying~\eqref{eq:Quantile-Gaussian-Process}; what is the right choice? The first problem is easy to rectify in that if a variance estimator $\hat{\sigma}_{M,j}$ (for ${(\Sigma_M^{-1}V_M\Sigma_M^{-1})_j}^{1/2}$) has a good enough rate of convergence with respect to the metric $|\hat{\sigma}_{M,j}/{(\Sigma_M^{-1}V_M\Sigma_M^{-1})_j}^{1/2} - 1|$ uniformly over all $M\in\mathcal{M}, 1\le j\le |M|$ then it is easy to prove a version of Corollary~\ref{cor:BEOLS-Varaible-Selection} with the unknown variance replaced by the estimator in the first probability.  

Related to the choice of $(a_{\alpha})_{M\in\mathcal{M},1\le j\le |M|}$, in the path-breaking work~\cite{Berk13}, the authors have used $(a_{\alpha}) = a\mathbf{1}$ for some constant $a$, which means that the simultaneous inference is based on quantiles of the maximum statistic:
\begin{equation}\label{eq:Maximum-Normal-Approximation}
\max_{M\in\mathcal{M}}\max_{1\le j\le |M|}{(\Sigma_M^{-1}V_M\Sigma_M^{-1})_j}^{-1/2}{|(\hat{\beta}_M - \beta_M)_j|}.
\end{equation}
\cite{Berk13} assumed non-stochastic covariates and an independent homoscedsatic Gaussian model for the response. This statistic was also adopted in~\cite{Bac16} where the framework was generalized to the case of non-Gaussian responses (but with non-stochastic covariates); further both works require the total number of covariates to be fixed and not change with $n$. The analysis above does not require either of these conditions since our results are \emph{deterministic}. 
% In the description above, we have assumed $\sum_{M\in\mathcal{M}}|M|$ is not changing with $n$ only to claim a limiting Gaussian distribution for the whole vector but from the end results it is clear that we only need limiting distribution approximation for the maximum statistic, that is, we only need
% \begin{equation}\label{eq:Maximum-Normal-Approximation}
% \max_{\substack{M\in\mathcal{M},\\{1\le j\le |M|}}}\left|\frac{\sqrt{n}(\hat{\beta}_{M,j} - \beta_{M,j})}{\sigma_{M,j}}\right| \overset{d}{\approx} \max_{\substack{M\in\mathcal{M},\\{1\le j\le |M|}}}|G_{M,j}|,
% \end{equation}
% and one way to get this is to give normal approximation to the whole stacked vector. Recent results in high-dimensional central limit theorem \citep{Chern17,deng2017beyond, Zhang14, ZhangWu17, 2018arXiv180606153K} directly provide~\eqref{eq:Maximum-Normal-Approximation} without a normal approximation for the whole vector; All these results allow for $\sum_{M\in\mathcal{M}}|M|$ to grow with $n$ almost exponentially by only requiring $\sum_{M\in\mathcal{M}}|M| = \exp(n^{\alpha})$ for some $\alpha < 1$ and some of these works also allow for wide range of dependence structures on the observations. For example if $\mathcal{M}$ is either $\{M\subseteq\{1,\ldots,d\}:\,|M| = k\}$ or $\{M\subseteq\{1,\ldots,d\}:\,|M| \le k\}$ then the requirement is $k\log(ed/k) = o(n^{\alpha})$. 
Hence
\begin{equation}\label{eq:Max-t-quantile}
\mathbb{P}\left(\max_{{M\in\mathcal{M},\\1\le j\le |M|}}\,|G_{M,j}| \le K(\alpha)\right) = 1 - \alpha,
\end{equation}
implies for any $\hat{M}$ such that $\mathbb{P}(\hat{M}\in\mathcal{M}) = 1$, we have asymptotically
\begin{equation}\label{eq:max-t-model-quantile}
\mathbb{P}\left(\max_{1\le j\le |\hat{M}|}\left|{(\Sigma_{\hat{M}}^{-1}V_{\hat{M}}\Sigma_{\hat{M}}^{-1})_j}^{-1/2}{(\hat{\beta}_{\hat{M}} - \beta_{\hat{M}})_j}\right| \le K(\alpha)\right) \ge 1 - \alpha.
\end{equation}
The quantile $K(\alpha)$ in~\eqref{eq:Max-t-quantile} can be computed by bootstrapping the maximum statistic using the linear representation result; see~\cite{belloni2018high}, \cite{deng2017beyond} and~\cite{Zhang14} for details on bootstrap for independent/dependent summands in averages.

The maximum statistic in~\eqref{eq:Maximum-Normal-Approximation} (used in~\cite{Berk13} and~\cite{Bac16}) is only one of the many different ways of performing valid post-selection inference. It is clear that if for some $\alpha\in[0, 1]$ and numbers $\{K_M(\alpha):\,M\in\mathcal{M}\}$,
\begin{equation}\label{eq:Rectangle-Union-PoSI}
\mathbb{P}\left(\bigcap_{M\in\mathcal{M}}\left\{\max_{1\le j\le |M|}\left|G_{M,j}\right| \le K_M(\alpha)\right\}\right) = 1 - \alpha,
\end{equation}
then we have
\begin{equation}\label{eq:Rectangle-PoSI}
\mathbb{P}\left(\max_{1\le j\le |\hat{M}|}\left|{(\Sigma_{\hat{M}}^{-1}V_{\hat{M}}\Sigma_{\hat{M}}^{-1})_j}^{-1/2}{(\hat{\beta}_{\hat{M}} - \beta_{\hat{M}})_j}\right| \le K_{\hat{M}}(\alpha)\right) \ge 1 - \alpha + o(1),
\end{equation}
for any $\hat{M}$ (possibly random) such that $\mathbb{P}(\hat{M}\in\mathcal{M}) = 1$ (this equality can be relaxed to convergence to 1). Inequality~\eqref{eq:Rectangle-PoSI} readily implies (asymptotically valid) post-selection confidence region for $\beta_{\hat{M}}$ as
\[
\hat{\mathcal{R}}_{\infty,\hat{M}} := \left\{\theta\in\mathbb{R}^{|\hat{M}|}:\,\max_{1\le j\le |\hat{M}|}\left|{(\Sigma_{\hat{M}}^{-1}V_{\hat{M}}\Sigma_{\hat{M}}^{-1})_j^{-1/2}}{(\hat{\beta}_{\hat{M},j} - \theta_j)}\right| \le K_{\hat{M}}(\alpha)\right\}.
\]
Note that the confidence regions or more generally inference obtained from the maximum statistic corresponds to taking $(K_M(\alpha))_{M\in\mathcal{M}}$ in~\eqref{eq:Rectangle-Union-PoSI} to be a constant multiple of $(1)_{M\in\mathcal{M}}$ (all $1$'s vector). Further note that the event in~\eqref{eq:Rectangle-Union-PoSI} represents a specific choice of vector $a_{\alpha}$ in~\eqref{eq:Quantile-Gaussian-Process} for which Corollary~\ref{cor:BEOLS-Varaible-Selection} applies. Before we discuss how to choose $(K_M(\alpha))_{M\in\mathcal{M}}$, we list out some of the disadvantages of using the maximum statistic~\eqref{eq:Maximum-Normal-Approximation}. 
\paragraph{Disadvantages of the maximum statistic.} The maximum statistic is a natural generalization of inference for a single model to simultaneous inference over a collection of models. The maximum statistic would be the right thing to do if we are concerned with simultaneous inference for $p$ parameters (all of which are of same order) but this is not the case with OLS under variable selection. It is intuitively expected that models with more number of covariates would have larger width intervals. For this reason by taking the maximum over the collection $\mathcal{M}$ of models, one is ignoring the smaller models and the fact that small models have smaller width confidence intervals. To be concrete, if $\mathcal{M}$ is $\mathcal{M}_{\le k}$ it follows from the results of~\cite{Berk13,7915760} that
\begin{equation}\label{eq:Maximum-worst-case}
\max_{M\in\mathcal{M}}\max_{1\le j\le |M|}\,|G_{M,j}| = O_p(\sqrt{k\log(ed/k)}),
\end{equation}
and in the worst case this rate can be attained. But if $k = 40$ (for example) but the selected model $\hat{M}$ happened to have only two covariates, then the confidence interval is (unnecessarily) wider by a factor of $\sqrt{20}$. By allowing model dependent quantile $K_M(\alpha)$ as in~\eqref{eq:Rectangle-Union-PoSI} we can tighten confidence intervals appropriately. For this particular disadvantage, it is enough to have $K_M(\alpha)$ depend on $M$ only through $|M|$, its size. There is a second disadvantage of the maximum statistic that requires dependence of $K_M(\alpha)$ on the covariates in $M$.

To describe the second disadvantage we look at the conditions under which worst case rate in~\eqref{eq:Maximum-worst-case} is attained when $k = d$. \citet[Section 6.2]{Berk13} shows that if the covariates are non-stochastic, and
\[
\hat{\Sigma} := \begin{bmatrix}I_{d-1} & c\mathbf{1}_{d-1}\\\mathbf{0}_{d-1}^{\top} & \sqrt{1 - (d-1)c^2}\end{bmatrix},\mbox{ for some $c^2 < 1/(d-1)$},
\]
then there exists a constant $\mathfrak{C} > 0$, such that 
\begin{equation}\label{eq:Max-t-worst-case}\max_{M\in\mathcal{M}_{\le d}}\,\max_{1\le j\le |M|}|G_{M,j}| \ge \mathfrak{C}\sqrt{d}.\end{equation} Now define $\mathcal{M} = \{M\subseteq\{1,\ldots,d\}:M\subseteq\{1,\ldots,d-1\}\}$, that is, $\mathcal{M}$ is the collection of models that only contain the first $d-1$ covariates. It now follows from~\cite[Section 6.1]{Berk13} that
\begin{equation}\label{eq:Max-t-best-case}
\max_{M\in\mathcal{M}}\max_{1\le j\le |M|}|G_{M,j}| \asymp \sqrt{\log(ed)}.
\end{equation}
Comparing~\eqref{eq:Max-t-worst-case} and~\eqref{eq:Max-t-best-case}, it is clear that the inclusion of the last covariate increases the order of the maximum statistic from $\sqrt{\log(ed)}$ to $\sqrt{d}$; this shift is because of increased collinearity. This means that if in the selection procedure we allow all models but end up choosing the model that only contains the first $d-1$ covariates, we pay of lot more price than necessary. Note that if $d$ increases with $n$, this increase (in rate) could hurt more. Once again allowing for $K_{M}(\alpha)$ a model dependent quantile for maximum (over $j$) in that model resolves this disadvantage.
\paragraph{How to choose $K_M(\alpha)$?} Now that we have understood the need for model $M$ dependent quantiles $K_M(\alpha)$, it remains to decide how to find these quantiles. But first note that these are not uniquely defined because multivariate quantiles are not unique. We do not yet know of an ``optimal'' construction of $K_M(\alpha)$ and we describe a few choices below motivated by multi-scale testing literature~\citep{dumbgen2001multiscale,datta2018optimal}. Before we proceed to this, we note an impossibility on uniform improvement over the maximum statistic. Suppose we select a (random) model $\hat{M}$ such that
\[
\max_{1\le j\le |\hat{M}|}\left|{(\hat{\beta}_{\hat{M},j} - \beta_{\hat{M},j})}/{\sigma_{\hat{M},j}}\right| ~=~ \max_{M\in\mathcal{M}}\max_{1\le j\le |M|}\left|{(\hat{\beta}_{M,j} - \beta_{M,j})}/{\sigma_{M,j}}\right|,
\]
where $\sigma_{M,j}$ represents the standard deviation, $(\Sigma_M^{-1}V_M\Sigma_M^{-1})_j^{1/2}$, of $\hat{\beta}_{M,j} - \beta_{M,j}$. For this random model $\hat{M}$, $K(\alpha)$ the quantile of the maximum statistic in~\eqref{eq:Max-t-quantile} leads to the smallest possible rectangular confidence region for $\beta_{\hat{M}}$. This implies that $K_{\hat{M}}(\alpha) \ge K(\alpha)$ for any $\alpha\in[0, 1]$ and any sequence $(K_M(\alpha))_{M\in\mathcal{M}}$. Therefore no sequence of quantiles $(K_M(\alpha))_{M\in\mathcal{M}}$ satisfying~\eqref{eq:Rectangle-Union-PoSI} can improve on $K(\alpha)$ uniformly over $M\in\mathcal{M}$; any gain for some model is paid for by a loss for some other model. The hope is that the gain outweighs the loss and we see this in our simulations. 

Getting back to the construction of $K_M(\alpha)$, let the maximum for model $M$ be
\[
T_M := \max_{1\le j\le |M|}\,\left|{(\hat{\beta}_{M,j} - \beta_{M,j})}/{\hat{\sigma}_{M,j}}\right|,%\quad\mbox{and}\quad T_M^{G} := \max_{1\le j\le |M|}\,|G_{M,j}|,
\]
for an estimator $\hat{\sigma}_{M,j}$ of the standard deviation $\sigma_{M,j}$; recall $\sigma_{M,j}$ involves $V_M$ that converges to zero. Recall that the maximum statistic~\eqref{eq:Maximum-Normal-Approximation} is given by $\max_{M\in\mathcal{M}} T_M$. We now present three choices that will lead to three different quantiles $K_M(\alpha)$. 
\begin{enumerate}\label{page:Different-ways-posi}
  \item In order to take into account the set of covariates in $M$, we center $T_M$ by its median before taking the maximum:
  \begin{equation}\label{eq:Max-with-centering-no-scaling}
  \max_{M\in\mathcal{M}}\,\left\{T_M - \texttt{med}(T_M)\right\},
  \end{equation}
  where $\texttt{med}(\cdot)$ represents the median.
  One can center by the mean of $T_M$ but estimation of mean of a maximum using bootstrap is not yet clear. 
  %Closeness of mean of a maximum of average to that of a Gaussian is studied in~\cite{2018arXiv180606153K}. 
  Higher collinearity between the covariates in $M$ could increase the order of $T_M$, the effect of which we avoid spilling into other models by centering by the median. Also, it is clear that the median of $T_M$ has order depending only on $M$ not the maximum model size in collection $\mathcal{M}$. Further it is well-known that the maximum of Gaussians exhibit a super-concentration phenomenon in that their variance decreases to zero as the number of entries in the maximum goes to infinity. For this reason, it may not be of importance to scale by the standard deviation of $T_M$. If $K_{\mathcal{M}}^{(1)}(\alpha)$ represents the quantile of the statistic~\eqref{eq:Max-with-centering-no-scaling}, then the post-selection confidence intervals are given by
  \[
  \hat{\mathcal{R}}_M^{(1)} := \left\{\theta\in\mathbb{R}^{|M|}:\,\max_{1\le j\le |M|}|{(\hat{\beta}_{M,j} - \theta_j)}/{\hat{\sigma}_{M,j}}| \le \widehat{\texttt{med}}(T_M) + K_{\mathcal{M}}^{(1)}(\alpha)\right\}.
  \]
  \item The super-concentration of the maximum of Gaussians holds only under certain ``strong uncorrelatedness'' assumption. Following the previous suggestion, we can normalize the centered $T_M$ by its median absolute deviation (MAD) to account for the variance:
  \begin{equation}\label{eq:Max-with-centering-scaling}
  \max_{M\in\mathcal{M}}\,{\{T_M - \texttt{med}(T_M)\}}/{\texttt{MAD}(T_M)},
  \end{equation}
  where $\texttt{MAD}(T_M) := \texttt{med}(|T_M - \texttt{med}(T_M)|)$. If $K_{\mathcal{M}}^{(2)}(\alpha)$ represents the quantile of the statistic~\eqref{eq:Max-with-centering-scaling}, then the post-selection confidence intervals are given by
  \[
  \hat{\mathcal{R}}_M^{(2)} := \left\{\theta\in\mathbb{R}^{|M|}:\,\max_{1\le j\le |M|}|{(\hat{\beta}_{M,j} - \theta_j)}/{\hat{\sigma}_{M,j}}| \le \widehat{\texttt{med}}(T_M) + \widehat{\texttt{MAD}}(T_M)K_{\mathcal{M}}^{(2)}(\alpha)\right\}.
  \]
  \item Now that we have centered and scaled $T_M$ with its median and MAD, it is expected that even for models of different sizes, $(T_M - \texttt{med}(T_M))/\texttt{MAD}(T_M)$ are of the same order. However, when we take the maximum over all models of same size they may not be. The reason for this is the maximum over models of size $1$ involves $d$ terms and the maximum over models of size $2$ involves $d(d-1)/{2}$ terms. Hence naturally the maximum over models of size 2 is expected to be bigger. To account for this discrepancy define the centered and scaled maximum statistic for model size $s$ as
  \begin{equation}%\label{eq:Single-model-size-maximum}
  \textstyle
  \mathfrak{T}_s := \max_{|M| = s}{\{T_M - \texttt{med}(T_M)\}}/{\texttt{MAD}(T_M)},
  \end{equation}
  and take quantile of
  \begin{equation}\label{eq:centering-scaling-and-centering}
  \max_{1\le s\le k}\,\{\mathfrak{T}_s - \texttt{med}(\mathfrak{T}_s)\}.
  \end{equation}
  If $K_{\mathcal{M}}^{(3)}(\alpha)$ represents the quantile of the statistic~\eqref{eq:centering-scaling-and-centering}, then the post-selection confidence intervals are given by
  \[
  \hat{\mathcal{R}}_M^{(3)} := \left\{\theta:\max_{1\le j\le |M|}\left|\frac{\hat{\beta}_{M,j} - \theta_j}{\hat{\sigma}_{M,j}}\right| \le \widehat{\texttt{med}}(T_M) + \widehat{\texttt{MAD}}(T_M)[K_{\mathcal{M}}^{(3)}(\alpha) + \widehat{\texttt{med}}(\mathfrak{T}_{|M|})]\right\}.
  \]
\end{enumerate}
We emphasize once again that even though these choices improve the width of confidence intervals for some models, they will deteriorate the width for other models. We will see from the simulations in Section~\ref{sec:Simulations} that the gain (for some models) outweighs the loss (for other models) in width. All the choices above involve $\texttt{med}(T_M)$, $\texttt{MAD}(T_M)$ which are simple functions of quantiles and can be computed readily from bootstrap procedures mentioned above.
\section{Rates under Independence}\label{sec:Independence}
All the theoretical analysis in previous sections is deterministic and the complete study in any specific setting requires bounding the remainder terms in the deterministic inequalities above. In this section, we complete the program by bounding the remainder terms in case of independent observations. The two main quantities that need bounding for Theorem~\ref{thm:Deterministic-Ineq} are 
\[
\mathcal{D}^{\Sigma} := \|\Sigma^{-1/2}\hat{\Sigma}\Sigma^{-1/2} - I_d\|_{op}\quad\mbox{and}\quad \|\Sigma^{-1}(\hat{\Gamma} - \hat{\Sigma}\beta)\|_{\Sigma} = \|\Sigma^{-1/2}(\hat{\Gamma} - \hat{\Sigma}\beta)\|.
\]
The concentration of the sample covariance matrix to its expectation has been the study for decades documented in the works of~\cite{Ver12,Vershynin18},~\cite{Rud13},~\cite{guedon2015interval},~\cite{tikhomirov2017sample}. We state here the result from~\cite{tikhomirov2017sample} with minimal tail assumptions that we know of.
\begin{thm}[Theorem 1.1 of~\cite{tikhomirov2017sample}]
Fix $n \ge 2d$ and $p \ge 2$. If $X_1,\ldots,X_n$ are centered iid random vectors satisfying: for some $B \ge 1$,
\begin{equation}\label{eq:Polynomial-moment}\textstyle
\mathbb{E}|a^{\top}\Sigma^{-1/2}X|^p \le B^p\quad\mbox{for all}\quad a\in\mathbb{R}^d,\mbox{ with }\|a\| = 1.
\end{equation}
Then there exists a constant $K_p > 0$ with probability at least $1 - 1/n$,
\[
\mathcal{D}^{\Sigma} \le \frac{K_p}{n}\max_{1\le i\le n} \|\Sigma^{-1/2}X_i\|^2 + K_pB^{2}\left(\frac{d}{n}\right)^{1-2/p}\log^4\left(\frac{n}{d}\right) + K_pB^{2}\left(\frac{d}{n}\right)^{1 - 2/\min\{p,4\}}.
\]
\end{thm}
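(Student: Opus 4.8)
Set $Y_i := \Sigma^{-1/2}X_i$, so the $Y_i$ are iid, centered, satisfy $\mathbb{E}[Y_iY_i^{\top}] = I_d$, and inherit $\mathbb{E}|a^{\top}Y_i|^p \le B^p$ for every unit vector $a$. Since $\Sigma^{-1/2}\hat{\Sigma}\Sigma^{-1/2} - I_d = \frac{1}{n}\sum_{i=1}^n Y_iY_i^{\top} - I_d$ is symmetric, the object to control is the empirical process
\[
\mathcal{D}^{\Sigma} = \sup_{\|a\| = 1}\left|\frac{1}{n}\sum_{i=1}^n (a^{\top}Y_i)^2 - 1\right|.
\]
The plan is to split each quadratic form according to the magnitude of $\|Y_i\|$ at a truncation level $\tau$, chosen as a power of $d/n$ times $B$, and to treat the ``bulk'' and ``tail'' contributions by entirely different means, optimizing $\tau$ only at the very end.

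For the tail part I would bound $\frac{1}{n}\bigl\|\sum_{i}Y_iY_i^{\top}\mathbbm{1}\{\|Y_i\| > \tau\}\bigr\|_{op} \le \frac{1}{n}\sum_i\|Y_i\|^2\mathbbm{1}\{\|Y_i\| > \tau\}$, using the moment hypothesis to control both how many indices can exceed $\tau$ and the total mass they carry; the irreducible contribution of the single largest vector is precisely what produces the $n^{-1}\max_i\|\Sigma^{-1/2}X_i\|^2$ term, while the aggregate of the remaining large vectors yields a term of order $B^2(d/n)^{1-2/p}$. For the bulk part, writing $\tilde{Y}_i := Y_i\mathbbm{1}\{\|Y_i\| \le \tau\}$, I would separately control the bias $\|\mathbb{E}[\tilde{Y}\tilde{Y}^{\top}] - I_d\|_{op}$ (again via the $p$-th moment, since the discarded mass is small) and the centered fluctuation $\|\frac{1}{n}\sum_i(\tilde{Y}_i\tilde{Y}_i^{\top} - \mathbb{E}[\tilde{Y}_i\tilde{Y}_i^{\top}])\|_{op}$. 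The latter has summands of operator norm at most $\tau^2$, so a matrix Bernstein / Rudelson-type deviation inequality applies; its variance proxy is governed by $\sup_{\|a\|=1}\mathbb{E}(a^{\top}Y)^4$ when $p \ge 4$ and by the $p$-th moment otherwise, which is exactly the source of the exponent $1 - 2/\min\{p,4\}$ in the stated bound.

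The delicate point, and what I expect to be the main obstacle, is uniformity over the sphere for heavy-tailed data. A naive $\tfrac12$-net of the unit sphere has cardinality of order $5^d$, and a union bound over individual polynomial tails of $\frac{1}{n}\sum(a^{\top}Y_i)^2$ is far too lossy to recover the stated rate. The resolution is to forgo a union bound over per-point deviations and instead decompose the marginals $a^{\top}Y_i$ into $O(\log(n/d))$ dyadic magnitude shells, bounding on each shell the \emph{number} of net points that can simultaneously witness a large deviation (a decoupling and combinatorial counting argument rather than a tail union bound), and then to recombine the shells. Each shell contributes logarithmic factors from the net entropy, the tail estimate, and the number of scales, and it is their accumulation across the $O(\log(n/d))$ levels that produces the $\log^4(n/d)$ factor. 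Assembling the truncation bias, the bulk fluctuation with its fourth-moment variance proxy, and the tail/extreme-value term, and then optimizing $\tau$ in $d/n$, yields the three-term bound on an event of probability at least $1 - 1/n$.
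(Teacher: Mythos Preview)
The paper does not prove this theorem; it is quoted from \cite{tikhomirov2017sample} as an external input and used without argument, so there is no ``paper's own proof'' to compare against.

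That said, your sketch is broadly faithful to Tikhomirov's actual strategy: pass to the isotropic vectors $Y_i=\Sigma^{-1/2}X_i$, truncate at a level $\tau$, peel off the largest vector (which is exactly where $n^{-1}\max_i\|\Sigma^{-1/2}X_i\|^2$ comes from), bound the remaining heavy-tail mass by the $p$-th moment to get the $(d/n)^{1-2/p}$ term, and then handle the bounded part by a multi-scale argument over the sphere rather than a naive net union bound. Your identification of the main obstacle---that a straightforward $\epsilon$-net plus polynomial tail bounds is far too lossy---and of the cure---decomposing marginals into $O(\log(n/d))$ magnitude shells and counting how many directions can simultaneously be bad at each scale---matches the core idea of the paper you are reconstructing, and the $\log^4(n/d)$ factor does arise from stacking logarithms across scales.

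One structural point is slightly off. You present the bulk analysis as ``matrix Bernstein/Rudelson for the centered fluctuation'' and \emph{then} a separate delicate multi-scale step. In Tikhomirov's proof the multi-scale/counting machinery \emph{is} the mechanism that controls the bulk fluctuation; a black-box matrix Bernstein on the truncated summands would give suboptimal dependence on $\tau$ and $d$, and the variance proxy $\sup_a\mathbb{E}(a^{\top}Y)^4$ only enters cleanly through the shell-by-shell accounting. So the second paragraph of your sketch should be read as motivation for why something sharper is needed, with the third paragraph being the actual engine for the bulk, not an add-on.
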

The random quantity on the right hand side can be bounded using appropriate bounds on $\mathbb{E}[\|\Sigma^{-1/2}X_i\|^{2q}/d^q]$ for some $q\ge1$. Assuming the first term can be ignored compared to the others, we get that $\mathcal{D}^{\Sigma}$ converges to zero as long as $d = o(n)$ when the covariates have at least $(2 + \delta)$-moments. Further if $p \ge 4$, then $\mathcal{D}^{\Sigma} = O_p(\sqrt{d/n})$. Regarding the term $\|\Sigma^{-1/2}(\hat{\Gamma} - \hat{\Sigma}\beta)\|$, we have
\begin{align*}
\mathbb{E}\|\Sigma^{-1/2}(\hat{\Gamma} - \hat{\Sigma}\beta)\| &\le \sqrt{\mbox{tr}(\mbox{Var}(\Sigma^{-1/2}(\hat{\Gamma} - \hat{\Sigma}\beta)))} = \frac{(\mathbb{E}[\|\Sigma^{-1/2}X\|^2(Y - X^{\top}\beta)^2])^{1/2}}{\sqrt{n}}.
\end{align*}
Hence if $\mathbb{E}[\|\Sigma^{1/2}X\|^2(Y - X^{\top}\beta)^2] = O(d)$, then we get $\|\Sigma^{-1/2}(\hat{\Gamma} - \hat{\Sigma}\beta)\| = O_p(\sqrt{d/n})$. Combining these calculations with Theorem~\ref{thm:Deterministic-Ineq}, we get
\[
\|\hat{\beta} - \beta\|_{\Sigma} = O_p(1)\sqrt{\frac{d}{n}}\quad\mbox{and}\quad \|\hat{\beta} - \beta - \Sigma^{-1}(\hat{\Gamma} - \hat{\Sigma}\beta)\|_{\Sigma} = O_p(1)\frac{d}{n},
\]
allowing for $d$ growing with the sample size $n$; consistency holds when $d = o(n)$ and asymptotic normality holds when $d = o(\sqrt{n})$. Asymptotic analysis for $d/n\to\kappa\in[0, 1)$ can be done with more stringent conditions on the observations.

Regarding Corollary~\ref{cor:Uniform-in-Submodel}, we need to control \emph{simultaneously} over $M\in\mathcal{M}$,
\begin{equation}\label{eq:Simultaneous-remainder-control}
\mathcal{D}_M^{\Sigma} = \|\Sigma_M^{-1/2}\hat{\Sigma}_M\Sigma_M^{-1/2} - I_{|M|}\|_{op}\quad\mbox{and}\quad \|\Sigma^{-1/2}_M(\hat{\Gamma}_M - \hat{\Sigma}_M\beta_M)\|.
\end{equation}
This simultaneous control often necessitates exponential tails for covariates if one needs to allow $d$ to grow (almost exponentially) with $n$. \cite{guedon2015interval} provide sharp results for $\sup_{|M|\le k}\|\hat{\Sigma}_M - \Sigma_M\|_{op}$ for both polynomial and exponential tails on covariates. We do not know such sharp results for $\sup_{M\in\mathcal{M}}\mathcal{D}_M^{\Sigma}$. By a simple union bound the following result can be proved for both quantities in~\eqref{eq:Simultaneous-remainder-control}. For this we assume the following extension of~\eqref{eq:Polynomial-moment}: For all $1\le i\le n$,
\begin{equation}\label{eq:Sub-Weibull-Covariance-Scaled}
\mathbb{E}\left[\exp\left(\frac{|a^{\top}X_i|^{\beta}}{\mathfrak{K}_{\beta}^{\beta}\|a\|_{\Sigma}^{\beta}}\right)\right] \le 2,\;\;\mbox{for some $\beta > 0$, $0 < \mathfrak{K}_{\beta} < \infty$ and for all $a\in\mathbb{R}^d$.}
\end{equation}
Condition~\eqref{eq:Sub-Weibull-Covariance-Scaled} is same as sub-Gaussianity if $\beta = 2$ and is same as sub-exponentiality if $\beta = 1$. With $\beta = \infty$, it becomes a boundedness condition. If $X_i$'s satisfy condition~\eqref{eq:Sub-Weibull-Covariance-Scaled} with $\beta < 1$ then their moment generating function may not exist but they still exhibit ``weak'' exponential tails. Additionally note that~\eqref{eq:Sub-Weibull-Covariance-Scaled} does not require $\Sigma$ to be invertible and it implies that for all $M\subseteq\{1,2,\ldots,d\}$,
\begin{equation}\label{eq:Sub-Weibull-sub-model-Covariance-Scaled}
\mathbb{E}\left[\exp\left({\mathfrak{K}_{\beta}^{-\beta}|a^{\top}\Sigma_M^{-1/2}X_{i,M}|^{\beta}}\right)\right] \le 2\quad\mbox{for all}\quad a\in\mathbb{R}^{|M|}\mbox{ such that }\|a\| = 1.
\end{equation}
Define the kurtosis and ``regression variance'' for model $M$ as
\[
\kappa_M^{\Sigma} := \max_{\theta\in\mathbb{R}^{|M|}}\,\frac{1}{n}\sum_{i=1}^n \frac{\mbox{Var}((X_{i,M}^{\top}\theta)^2)}{\|\Sigma_M^{1/2}\theta\|^4}\quad\mbox{and}\quad \mathfrak{V}_M := \max_{\theta\in\mathbb{R}^{|M|}}\,\frac{1}{n}\sum_{i=1}^n \mbox{Var}(\theta^{\top}\Sigma_M^{-1/2}X_{i,M}Y_i).
\]
Assume the observations $(X_1,Y_1),\ldots,(X_n,Y_n)$ are just independent.
\begin{prop}\label{prop:Rates-D_M-Gamma_M}
Fix any $t\ge0$. Under~\eqref{eq:Sub-Weibull-sub-model-Covariance-Scaled}, we have with probability at least $1 - 3e^{-t}$, simultaneously for any $1\le s\le d$, for any $M\subseteq\{1,\ldots,d\}$ with $|M| = s$,
\begin{equation}\label{eq:Tail-Bound-D_M-Simultaneous}
\mathcal{D}_M^{\Sigma} \le 14\sqrt{\frac{\kappa_M^{\Sigma}(t + {s\log(9e^2d/s)})}{n}} + \frac{C_{\beta}\mathfrak{K}_{\beta}^{2}(\log(2n))^{2/\beta}(t + s\log(9e^2d/s))^{\max\{1, 2/\beta\}}}{n}.
\end{equation}
If~\eqref{eq:Sub-Weibull-sub-model-Covariance-Scaled} and $\mathbb{E}[Y_i^r] \le K_{n,r}^r$ for some $r\ge2$ hold true, then with probability at least $1 - 3e^{-t} - t^{-r+1}$, for any $1\le s\le d$, for any model $M\subseteq\{1,\ldots,d\}$ with $|M| = s$,
\begin{align}
\|\Sigma_M^{-1/2}(\hat{\Gamma}_M - \hat{\Sigma}_M\beta_M)\| &\le 14\sqrt{\frac{\mathfrak{V}_M(t + s\log(5e^2d/s))}{n}} + \mathcal{D}_M^{\Sigma}\textstyle(\sum_{i=1}^n \mathbb{E}[Y_i^2]/n)^{1/2}\nonumber\\
&\qquad+ \frac{C_{\beta}K_{n,r}\mathfrak{K}_{\beta}(\log(2n))^{1/\beta}(t + s\log(5e^2d/s))^{\max\{1,1/\beta\}}}{n^{1 - 1/r}}\nonumber\\
&\qquad+ \frac{tC_{\beta, r}K_{n,r}\mathfrak{K}_{\beta}(s\log(5e^2d/s) + \log n)^{1/\beta}}{n^{1 - 1/r}},\label{eq:influence-function-expansion-bound}
\end{align}
for some constants $C_{\beta}, C_{\beta, r} > 0$ depending only on $\beta$ and $(\beta, r)$, respectively.
\end{prop}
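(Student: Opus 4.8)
The plan is to prove both bounds by a two-stage argument: a discretization that converts the operator/Euclidean norms into maxima of scalar empirical averages and simultaneously absorbs the union bound over submodels, followed by a sub-Weibull Bernstein inequality applied to each scalar average. Throughout I write $Z_{i,M} := \Sigma_M^{-1/2}X_{i,M}$, which by~\eqref{eq:Sub-Weibull-sub-model-Covariance-Scaled} satisfies $\mathbb{E}[\exp(\mathfrak{K}_\beta^{-\beta}|a^\top Z_{i,M}|^\beta)]\le 2$ for every unit $a$, and I note that the canonical choice~\eqref{eq:Canonical-choice} gives $n^{-1}\sum_i\mathbb{E}[Z_{i,M}Z_{i,M}^\top] = I_{|M|}$. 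For the operator norm I would use $\|A\|_{op}\le 2\max_{\theta\in\mathcal{N}}|\theta^\top A\theta|$ over a $1/4$-net $\mathcal{N}$ of the unit sphere in $\mathbb{R}^{s}$ (cardinality $\le 9^{s}$), and for the vector norm $\|v\|\le 2\max_{\theta\in\mathcal{N}'}\theta^\top v$ over a $1/2$-net $\mathcal{N}'$ (cardinality $\le 5^{s}$). Combining these net cardinalities with the $\binom{d}{s}\le (ed/s)^{s}$ models of size $s$ produces the complexity terms $s\log(9e^2d/s)$ and $s\log(5e^2d/s)$ exactly as stated, the $9$ versus $5$ reflecting the two net resolutions.

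For $\mathcal{D}_M^\Sigma$, fixing $\theta$ in the net gives $\theta^\top(\Sigma_M^{-1/2}\hat\Sigma_M\Sigma_M^{-1/2}-I)\theta = n^{-1}\sum_i\{(\theta^\top Z_{i,M})^2 - \mathbb{E}(\theta^\top Z_{i,M})^2\}$, a centered average whose summands are squares of sub-Weibull$(\beta)$ variables, hence sub-Weibull$(\beta/2)$. I would invoke a sub-Weibull Bernstein inequality (of the type in the authors' companion work) with variance proxy $n^{-1}\sum_i\mathrm{Var}((\theta^\top Z_{i,M})^2)$, whose maximum over unit $\theta$ equals $\kappa_M^\Sigma$ after the reparametrization $\theta=\Sigma_M^{1/2}\phi/\|\Sigma_M^{1/2}\phi\|$ in the definition of $\kappa_M^\Sigma$. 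Deploying the inequality at deviation level $t + s\log(9e^2d/s)$ and taking a union bound over net points, submodels, and sizes $s$ yields~\eqref{eq:Tail-Bound-D_M-Simultaneous}; the factor $(\log 2n)^{2/\beta}$ is the standard overhead from the truncation step inside the sub-Weibull Bernstein bound for a sum of $n$ terms, and $\max\{1,2/\beta\}$ is its characteristic tail exponent.

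For the linear-representation term I would first exploit the population normal equations. Since $\beta_M$ minimizes $n^{-1}\sum_i\mathbb{E}[(Y_i - X_{i,M}^\top\theta)^2]$, its first-order condition reads $\Gamma_M = \Sigma_M\beta_M$ with $\Gamma_M := n^{-1}\sum_i\mathbb{E}[X_{i,M}Y_i]$, so
\[
\Sigma_M^{-1/2}(\hat\Gamma_M - \hat\Sigma_M\beta_M) = \Sigma_M^{-1/2}(\hat\Gamma_M - \Gamma_M) - (\Sigma_M^{-1/2}\hat\Sigma_M\Sigma_M^{-1/2}-I)\Sigma_M^{1/2}\beta_M.
\]
The second piece has norm at most $\mathcal{D}_M^\Sigma\|\Sigma_M^{1/2}\beta_M\|$, and the Pythagorean identity for the averaged $L^2$ projection, $n^{-1}\sum_i\mathbb{E}[Y_i^2] = n^{-1}\sum_i\mathbb{E}[(Y_i - X_{i,M}^\top\beta_M)^2] + \|\Sigma_M^{1/2}\beta_M\|^2$, bounds $\|\Sigma_M^{1/2}\beta_M\|$ by $(n^{-1}\sum_i\mathbb{E}[Y_i^2])^{1/2}$, producing the cross term. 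The first piece $n^{-1}\sum_i(Z_{i,M}Y_i - \mathbb{E}[Z_{i,M}Y_i])$ is the genuine object of concentration, with per-direction variance proxy $\mathfrak{V}_M$.

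The hard part will be concentrating this last average, since the summand $\theta^\top Z_{i,M}\,Y_i$ is a product of a sub-Weibull factor with a merely polynomially-integrable $Y_i$. I would truncate at a level $\tau\asymp n^{1/r}$: the bounded part $\theta^\top Z_{i,M}\,Y_i\mathbbm{1}\{|Y_i|\le\tau\}$ is sub-Weibull$(\beta)$ with scale $\lesssim \tau\mathfrak{K}_\beta$, so the same sub-Weibull Bernstein inequality (variance $\mathfrak{V}_M$, deviation $t+s\log(5e^2d/s)$) delivers the first and third terms, the $\tau/n = n^{-(1-1/r)}$ scaling explaining the denominator $n^{1-1/r}$; the tail part is controlled by the moment bound $\mathbb{E}[Y_i^r]\le K_{n,r}^r$ applied to $n^{-1}\sum_i\|Z_{i,M}\|\,|Y_i|\mathbbm{1}\{|Y_i|>\tau\}$, which contributes the fourth term and the extra $t^{-r+1}$ failure probability. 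The delicate points are choosing $\tau$ to balance the Bernstein and tail contributions, keeping the variance proxies exactly $\kappa_M^\Sigma$ and $\mathfrak{V}_M$ rather than crude upper bounds, and verifying that the polynomial-moment remainder from the heavy tail of $Y$ lands in precisely the stated form.
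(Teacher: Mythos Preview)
Your plan matches the paper's proof closely: the same $1/4$- and $1/2$-nets (hence the $9$ versus $5$), the same sub-Weibull Bernstein inequality for the bounded pieces, the same decomposition via the population normal equations to isolate the $\mathcal{D}_M^\Sigma\|\Sigma_M^{1/2}\beta_M\|$ cross term, the same projection bound $\|\Sigma_M^{1/2}\beta_M\|^2\le n^{-1}\sum_i\mathbb{E}[Y_i^2]$, and truncation of $Y$ at level $\asymp n^{1/r}K_{n,r}$.

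The one place where your sketch is too optimistic is the heavy-tail piece $\mathcal{E}_{M,2}$. Writing ``the moment bound $\mathbb{E}[Y_i^r]\le K_{n,r}^r$ applied to $n^{-1}\sum_i\|Z_{i,M}\|\,|Y_i|\mathbbm{1}\{|Y_i|>\tau\}$'' does not, by itself, deliver a bound that is \emph{simultaneously} valid over all $M$ with the stated $(s\log(5e^2d/s)+\log n)^{1/\beta}$ dependence and probability $1-t^{-(r-1)}$. The paper first normalizes $\mathcal{E}_{M,2}$ by this factor, then pulls the maximum over $(M,\nu)$ \emph{inside} the sum and invokes Proposition~6.8 of Ledoux--Talagrand: because the truncation level is chosen as $B=8\,\mathbb{E}[\max_i|Y_i|]$, the expectation of the resulting nonnegative sum is controlled by $8$ times the expectation of a single maximum over $i$, which after Cauchy--Schwarz factorizes into the $\|\max_i|Y_i|\|_r$ piece and the sub-Weibull maximal piece. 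To convert this $L^1$ bound into the claimed tail bound, the paper then upgrades to an $L^{r-1}$ bound via a moment inequality for nonnegative sums (Theorem~8 of Boucheron et al.) combined with H\"older, and finishes with Markov at exponent $r-1$, which is where the $t^{-r+1}$ comes from. None of these steps is hard individually, but the Ledoux--Talagrand trick is the non-obvious ingredient that lets the sum of tail-truncated terms collapse to a maximum; without it, a direct union bound over models on the tail part loses a polynomial factor in $d$.
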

The proof of Proposition~\ref{prop:Rates-D_M-Gamma_M} can be found in Appendix~\ref{AppSec:Proof-of-prop-Rates-D_M-Gamma_M}. Note that the rates for $\mathcal{D}_M^{\Sigma}$ and for $\|\Sigma_M^{-1/2}(\hat{\Gamma}_M - \hat{\Sigma}_M\beta_M)\|$ scale with $|M|$ (and only logarithmically on the total number of covariates $d$) and we did not just bound $\max_{|M|\le k}\mathcal{D}_M^{\Sigma}$. This is what we tried to replicate in a data-driven way from the post-selection confidence regions in Page~\pageref{page:Different-ways-posi} by centering with quantities depending on $M$. Ignoring the lower order terms, we have uniformly over all $M\subseteq\{1,2,\ldots,d\}$,
\[
\max\{\mathcal{D}_M^{\Sigma},\; \|\Sigma_M^{-1/2}(\hat{\Gamma}_M - \hat{\Sigma}_M\beta_M)\|\} = O_p(1)\sqrt{\frac{|M|\log(ed/|M|)}{n}},
\]
which also provides $\eta_M$ for an application of Corollary~\ref{cor:BEOLS-Varaible-Selection}. It is noteworthy that we only require finite number of moments on the response.
\section{Simulation Results}\label{sec:Simulations}
We consider three different settings and compare different ways of post-selection inference as described in Page~\pageref{page:Different-ways-posi}. We only consider the case of fixed design under the well-specified linear model (that unfortunately goes against the philosophy of the paper) which we do since the lower bound and worst case results in post-selection inference are only available for fixed design case~\citep{Berk13}. The fixed design for each of the cases are as follows:
\begin{itemize}
	\item[(a)] \textbf{Orthogonal design.} We take $x_1,\ldots,x_n$ such that $\hat{\Sigma} = n^{-1}\sum_{i=1}^n x_ix_i^{\top} = I_d.$
	We find the $x_i$ by first taking a matrix $\mathcal{X}\in\mathbb{R}^{n\times d}$ satisfying $\mathcal{X}^{\top}\mathcal{X} = I_d$ and then multiply this matrix with $\hat{\Sigma}^{1/2}$ (which in this setting is $I_d$).
	\item[(b)] \textbf{Exchangeable design.} We take $x_1,\ldots,x_n$ such that $\hat{\Sigma} = I_d + \alpha\mathbf{1}_d\mathbf{1}_d^{\top}$ with $\alpha = -{1}/{(d+2)}.$ Here $\mathbf{1}_d$ is the all $1$'s vector of dimension $d$.
	\item[(c)] \textbf{Worst-case design.} We take $x_1,\ldots,x_n$ such that
	\[
	\hat{\Sigma} := \begin{bmatrix}I_{d-1} & c\mathbf{1}_{d-1}\\\mathbf{0}_{d-1}^{\top} & \sqrt{1 - (d-1)c^2}\end{bmatrix},\mbox{ with }c^2 = \frac{1}{2(d-1)},
	\]
\end{itemize}
For the first two settings, it is known that the maximum statistic~\eqref{eq:Maximum-Normal-Approximation} is of order $\sqrt{\log(d)}$ and for the last setting it is known that it is of order $\sqrt{d}$ (where we hope the other ways of PoSI would help improve the confidence intervals). See~\citet[Section 6]{Berk13} for details. For each setting, the model is
$Y_i = x_i^{\top}\beta_0 + \varepsilon_i,$
with $\varepsilon_i\overset{iid}{\sim} N(0, \sigma^2), \sigma = 1$ and $\beta_0$ is randomly generated as a vector with each coordinate being a $\texttt{Unif}(-1,1)$ independently. We consider $d = 20, \mathcal{M} = \mathcal{M}_{\le 10}, \alpha = 0.05$ (confidence level is $0.95$). Even though the variance of $\hat{\beta}_M - \beta_M$ is $\sigma^2\hat{\Sigma}_M^{-1}$, we estimate it by using~\eqref{eq:Conservative-Variance} ignoring the Gaussian response knowledge. 

We report the simulations in the following way: For all designs, we split all models in $\mathcal{M}$ into models of different sizes. We compute the (average over 500 simulations) coverage for all models of a given size and minimum, median as well as maximum (average) confidence interval length for that model size, that is,
\begin{equation}\label{eq:Reporting-quantities}
\mathbb{P}\left(\bigcap_{M\in\mathcal{M},\,|M| = s}\{\beta_M\in\hat{\mathcal{R}}_M\}\right),\quad \left\{\min_{|M| = s}, \med_{|M| = s}, \max_{|M| = s}\right\}\mathbf{m}(\hat{\mathcal{R}}_{M}),
\end{equation}
are reported with $\hat{\mathcal{R}}_M$ replaced by $\hat{\mathcal{R}}_M^{(j)}, 1\le j\le 3$ given in Page~\pageref{page:Different-ways-posi}, where $\mathbf{m}(\hat{\mathcal{R}}_{M})$ represents the threshold of the confidence region for model $M$ (e.g., for $\hat{\mathcal{R}}_M^{(1)}$ it is $\widehat{\med}(T_M) + K_{\mathcal{M}}^{(1)}(\alpha)$). Note that this threshold is a proxy for the volume of the confidence region. Additionally we consider $\hat{\mathcal{R}}_M^{(0)}$ given by
\[
\left\{\theta\in\mathbb{R}^{|M|}:\,\max_{j}\left|\frac{\hat{\beta}_{M,j} - \theta_j}{\hat{\sigma}_{M,j}}\right| \le K_{\mathcal{M}}^{(0)}(\alpha)\right\}\;\mbox{with}\; K_{\mathcal{M}}^{(0)}(\alpha) := (1-\alpha)\mbox{-quantile}\left(\max_{M\in\mathcal{M}}T_M\right). 
\]
Finally we also report $\mathbb{P}(\cap_{M\in\mathcal{M}}\{\beta_M\in\hat{\mathcal{R}}_M\})$. Note that by construction this probability has to be about $0.95$ and by noting the first quantity in~\eqref{eq:Reporting-quantities}, we see if the constructed confidence regions are too conservative for models of smaller sizes. Table~\ref{tab:TotalCoverage} shows the average coverage from all methods in all settings confirming that these are valid post-selection confidence regions. 
\begin{table}[!h]
\centering
\begin{tabular}{lrrr}
  \hline
method & Setting (a) & Setting (b) & Setting (c) \\ 
  \hline
method 0 & 0.986 & 0.976 & 0.972 \\ 
  method 1 & 0.964 & 0.960 & 0.964 \\ 
  method 2 & 0.964 & 0.956 & 0.958 \\ 
  method 3 & 0.964 & 0.956 & 0.958 \\ 
   \hline
\end{tabular}
\caption{\label{tab:TotalCoverage}The numbers in table represent the average simultaneous coverage of all confidence regions for all settings estimate of $\mathbb{P}(\cap_{M\in\mathcal{M}}\{\beta_M\in\hat{\mathcal{R}}_M\})$ based on 500 replications.}
\end{table}

Figures~\ref{fig:Settinga},~\ref{fig:Settingb}, and~\ref{fig:Settingc} show the results (for settins (a), (b) and (c), respectively) from 500 simulations within each 200 bootstrap samples were used. In all the settings, the coverage from the proposed methods ($\hat{\mathcal{R}}_M^{(j)}, j = 1,2,3$) is closer to $0.95$ and for many models the proposed intervals are shorter than the ones from $\hat{\mathcal{R}}_M^{(0)}$. %Method 2 corresponding to $\hat{\mathcal{R}}_M^{(2)}$ seems to have unusually high confidence length in comparison which will be investigated elsewhere. 
\begin{figure}[!h]
\includegraphics[width=\textwidth]{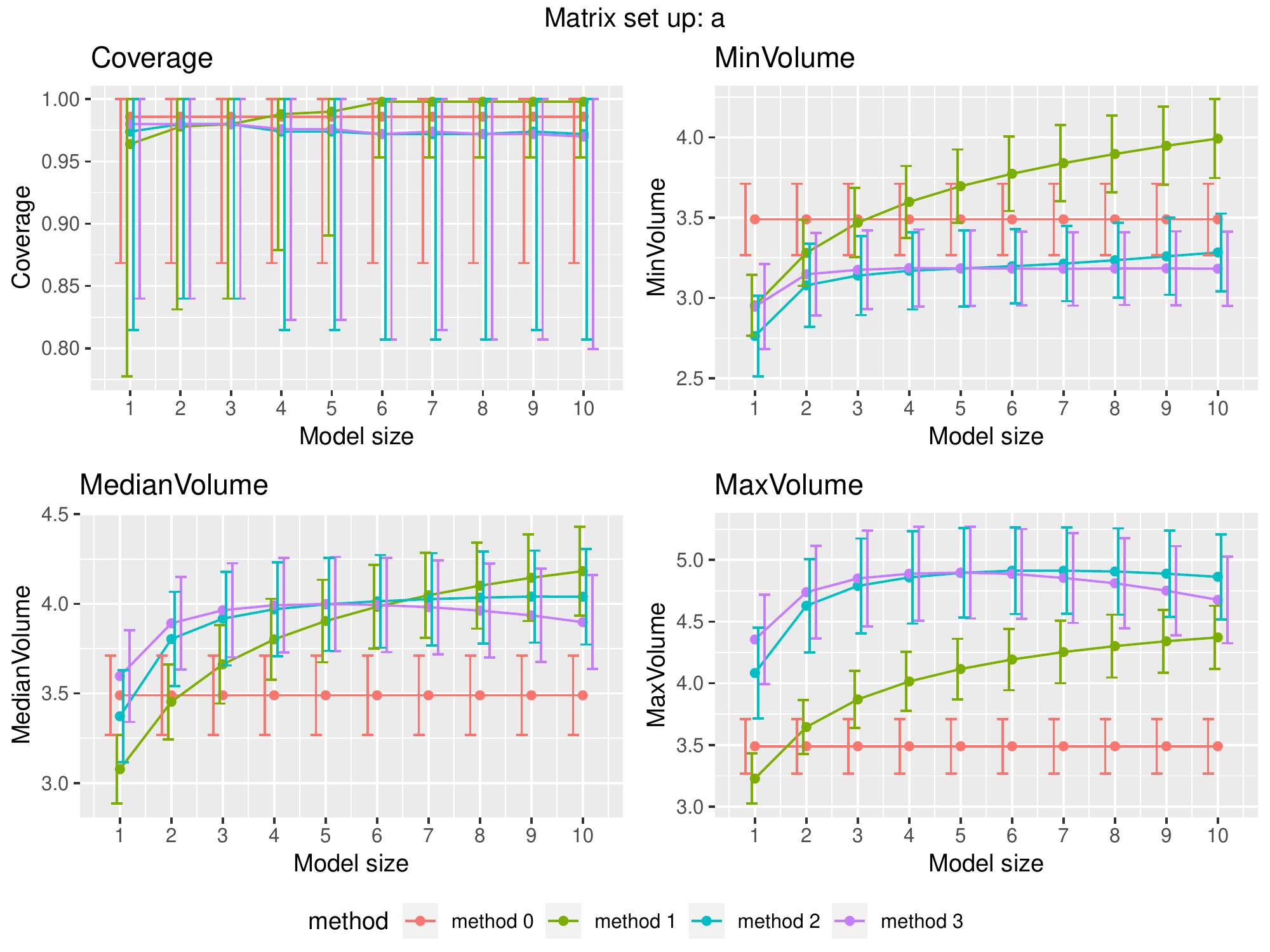}
\caption{The results for setting (a) with orthogonal design. The lines represents average (of quantities in~\eqref{eq:Reporting-quantities}) over 500 replications and error bars are $\pm 1$ SD over replications. Method $j$ in legend refers to confidence regions $\hat{\mathcal{R}}_M^{(j)}$ for $j = 0, 1, 2, 3$. Volume in the plots refers to the threshold $\textbf{m}(\hat{\mathcal{R}}_M)$.}
\label{fig:Settinga}
\end{figure}
\begin{figure}[!h]
\includegraphics[width=\textwidth]{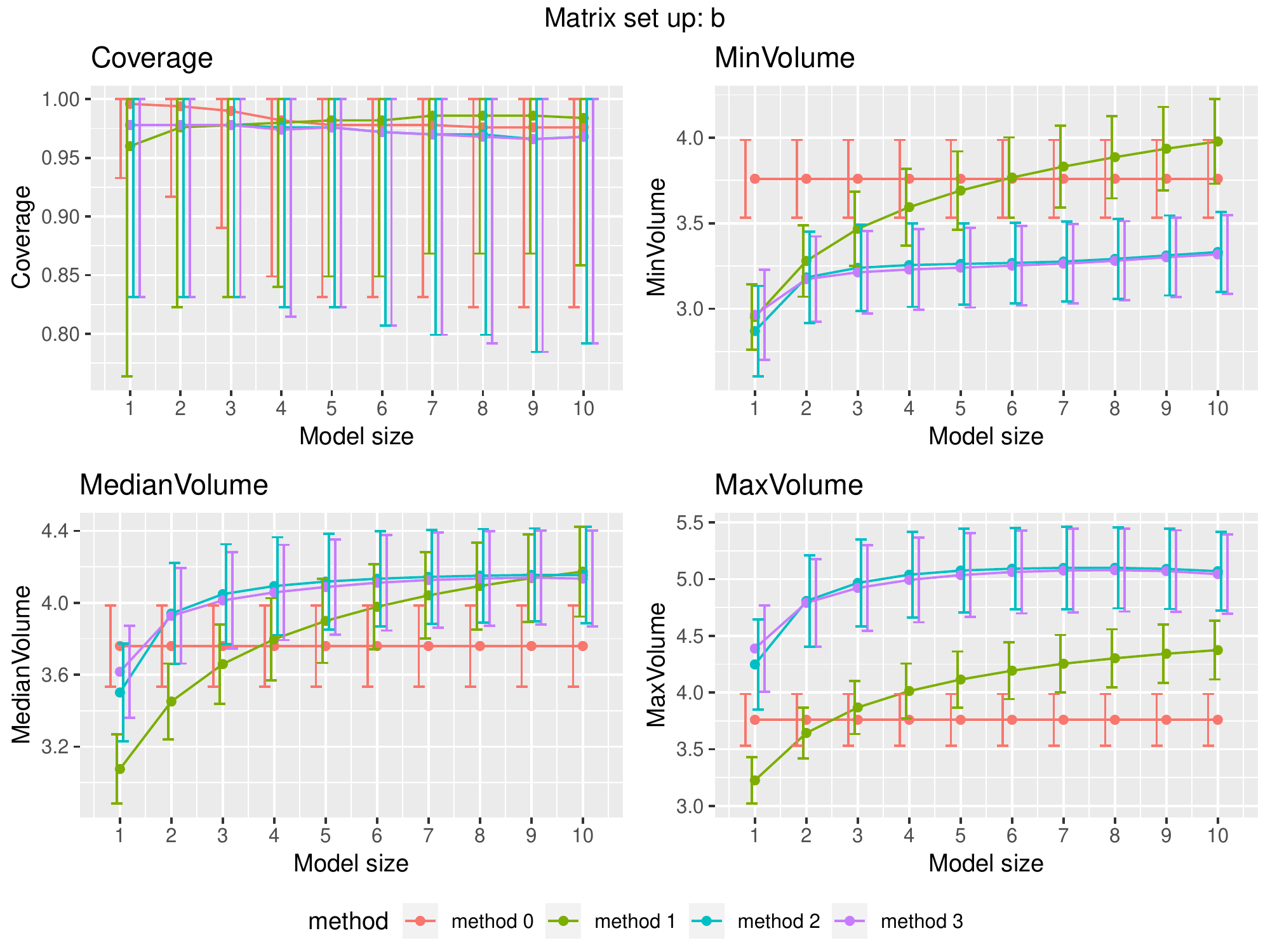}
\caption{The results for setting (b) with exhangeable design.}
\label{fig:Settingb}
\end{figure}
\begin{figure}[!h]
\includegraphics[width=\textwidth]{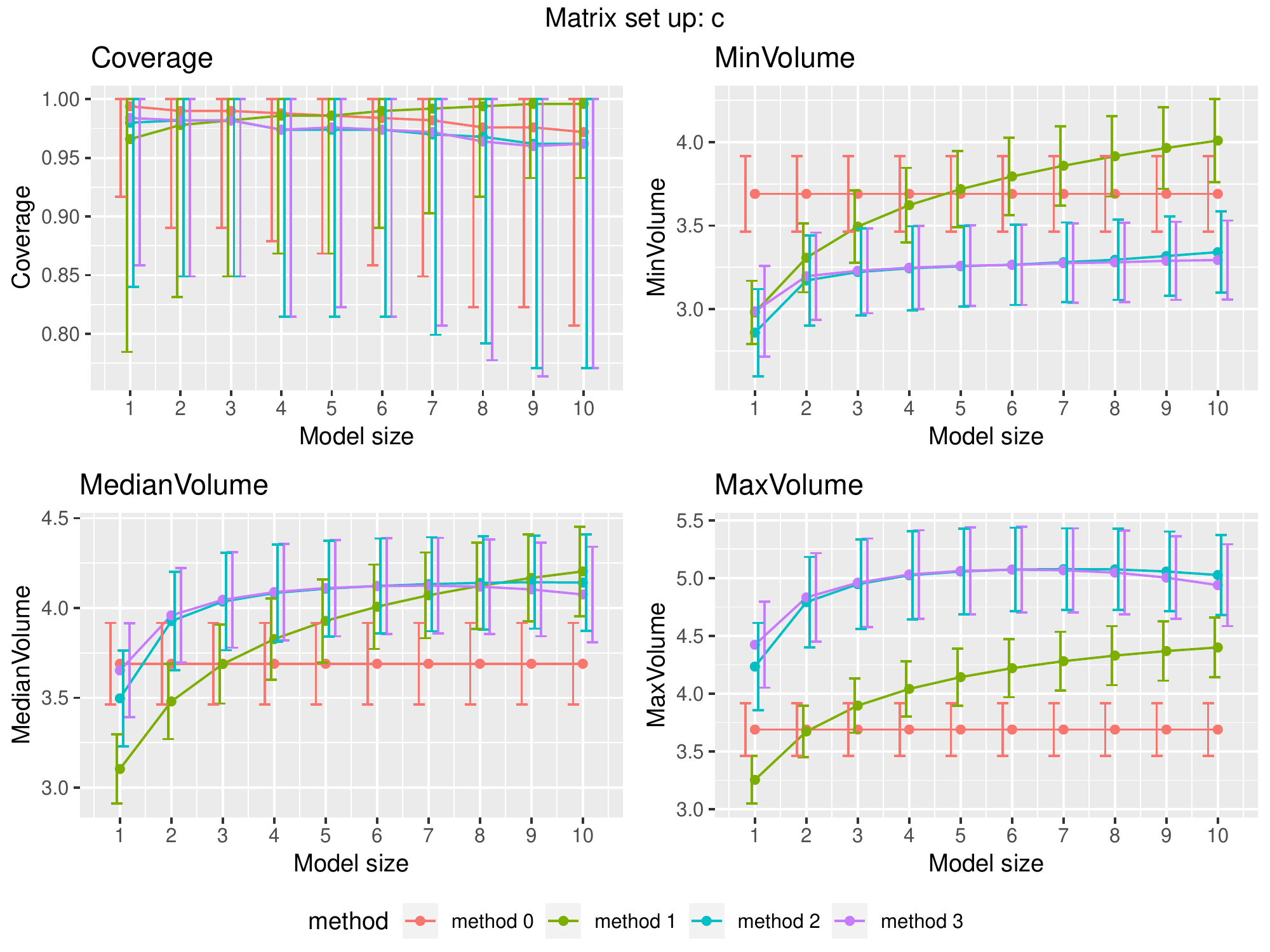}
\caption{The results for setting (c) with worst case design.}
\label{fig:Settingc}
\end{figure}
\section{Summary and Final Word}\label{sec:Summary}
We have provided a completely deterministic study of ordinary least squares linear regression setting which implies asymptotic normality, inference, inference under variable selection and much more without requiring any of the classical model assumptions. This study brings out two important quantities that needs to be controlled for a complete study of the OLS estimator. We control these quantities in case of independent observations allowing for the total number of covariates to diverge with the sample size (almost exponentially).

We have shown through our results here that the study of an estimator can be split into two parts. One that leads to (deterministic) inequalities that hold for any set of observations and one that requires assumptions on data generating process to control the remainder terms in the deterministic inequalities or Berry--Esseen type results or (more importantly) for inference. We have extensively studied the first part in this paper and the second part (inferential part) needs to be understood more carefully when the observations are dependent; the references mentioned about block bootstrap/resampling techniques would be a starting point but rates in finite samples with increasing dimensions needs to be understood.

In the later part of the paper, we have focused on OLS under variable selection. From the derivation it should be clear that variable selection is just a choice we made and one can easily study OLS under transformations of response and/or covariates using the deterministic inequality.

We have chosen to study the OLS linear regression estimator because of its simplicity; even in this case some calculations get messy. An almost parallel set of results can be derived for other regression estimators including GLMs, Cox proportional hazards model and so on; see~\cite{2018arXiv180905172K} for details.

Finally we close with some comments on computation. The methods of inference after variable selection mentioned in Section~\ref{subsec:Inference-OLS-Variable-Selection} involve computing maximum over all models $|M| = s$ and there are $\binom{d}{s}$ many such. This can be prohibitive if $d$ or $s$ is large. Allowing for slightly enlarged confidence regions (conservative inference), one can try to approximate these maximums from above without exact computation. We now briefly discuss one way of doing this and details are left for a future work. Suppose we want to find the maximum norm of $w = (w_1,\ldots,w_m)\in\mathbb{R}_+^m$. Further suppose we know an upper bound, $B$, on $\|w\|_{\infty}$. Note the trivial inequality
\[\textstyle
\left(m^{-1}\sum_{j=1}^m w_i^q\right)^{1/q} ~\le~ \|w\|_{\infty} ~\le~ m^{1/q}\left(m^{-1}\sum_{j=1}^m w_i^q\right)^{1/q},
\] 
for any $q\ge1$. If $q = \log(m)/\varepsilon$, then $\|w\|_{\infty}$ is $(m^{-1}\sum_{j=1}^m w_i^q)^{1/q}$ up to a factor of $e^{\varepsilon}$. Observe now that $(m^{-1}\sum_{j=1}^m w_i^q) = \mathbb{E}_{J}[w_J^q]$ (for $J\sim\texttt{Unif}\{1,\ldots,m\}$) is an expectation which can be estimated by $k^{-1}\sum_{\ell = 1}^k w_{j_{\ell}}^q$ for $j_1,\ldots,j_k\overset{iid}{\sim} \texttt{Unif}\{1,\ldots,m\}$. This is only an estimator of the expectation but using the apriori upper bound $B$, one can use any of the existing concentration inequalities to get a finite sample confidence interval for $(m^{-1}\sum_{j=1}^m w_i^q)^{1/q}$ which leads to an upper estimate of $\|w\|_{\infty}$. The details such as ``which concentration inequality is good?, how good the upper bound is?'' will be given elsewhere.      
\bibliographystyle{apalike}
\bibliography{../AssumpLean}
\appendix
\section{Proof of Corollary~\ref{cor:BEOLS}}\label{AppSec:ProofBEOLS}
\begin{proof}
From the definition of $\Delta_n$ and Theorem 2.1 of~\cite{rudelson2013hanson}, we get for all $r > 0$,
\begin{align*}
\mathbb{P}\left(\|\Sigma^{-1}(\hat{\Gamma} - \hat{\Sigma}\beta)\|_{\Sigma} > r + \|K^{1/2}\|_{HS}\right) &\le \mathbb{P}\left(\|K^{1/2}N(0, I_d)\|_{2} > r + \|K^{1/2}\|_{HS}\right) + \Delta_n
\\
&%\qquad
\le 2\exp\left(-\frac{c_1^2r^2}{\|K^{1/2}\|_{op}^2}\right) + \Delta_n,
\end{align*}
for some constant $c_1 > 0$ (independent of $p$ and $n$). Thus, we get for all $n\ge 1$,
\begin{equation}\label{eq:BoundEstimErr}
\mathbb{P}\left(\|\Sigma^{-1}(\hat{\Gamma} - \hat{\Sigma}\beta)\|_{\Sigma} > c_1^{-1}\|K^{1/2}\|_{op}\sqrt{\log n} + \|K^{1/2}\|_{HS}\right) \le 2n^{-1} + \Delta_n.
\end{equation}
For any set $A\subseteq\mathbb{R}^p$ and $\epsilon > 0$, let $A^{\epsilon}$ denote the $\epsilon$-inflation of the set $A$ with respect to the norm $\|\cdot\|_{\Sigma}$, that is, $A^{\epsilon} := \left\{y\in\mathbb{R}^p:\,\|y - x\|_{\Sigma} \le \epsilon\mbox{ for some }x\in A\right\}.$
Using Theorem~\ref{thm:Deterministic-Ineq}, we get with $\mathcal{D}^{\Sigma}$ as in~\eqref{eq:D-Sigma-Definition}, for any set $A\subseteq\mathbb{R}^p$,
\begin{align*}
\mathbb{P}\left(\Sigma^{1/2}(\hat{\beta} - \beta) \in A\right) &\le \mathbb{P}\left(\Sigma^{-1/2}(\hat{\Gamma} - \hat{\Sigma}\beta)\in A^{r_n\eta}\right)\\ &\qquad+ \mathbb{P}\left(\|\Sigma^{-1}(\hat{\Gamma} - \hat{\Sigma}\beta)\|_{\Sigma} > r_n\right) + \mathbb{P}\left(\mathcal{D}^{\Sigma} > \eta\right),\\
\mathbb{P}\left(\Sigma^{-1/2}(\hat{\Gamma} - \hat{\Sigma}\beta)\in A\right) &\le \mathbb{P}\left(\Sigma^{1/2}(\hat{\beta} - \beta) \in A^{r_n\eta}\right)\\
&\qquad+ \mathbb{P}\left(\|\Sigma^{-1}(\hat{\Gamma} - \hat{\Sigma}\beta)\|_{\Sigma} > r_n\right) + \mathbb{P}\left(\mathcal{D}^{\Sigma} > \eta\right).
\end{align*}
Therefore, we get
\begin{align*}
&\left|\mathbb{P}\left(\Sigma^{1/2}(\hat{\beta} - \beta)\in A\right) - \mathbb{P}\left(\Sigma^{-1/2}(\hat{\Gamma} - \hat{\Sigma}\beta) \in A\right)\right|\\ &\quad\le \mathbb{P}\left(\Sigma^{-1/2}(\hat{\Gamma} - \hat{\Sigma}\beta) \in A^{r_n\eta}\setminus A\right) + \mathbb{P}\left(\mathcal{D}^{\Sigma} > \eta\right) + \mathbb{P}\left(\|\Sigma^{-1}(\hat{\Gamma} - \hat{\Sigma}\beta)\|_{\Sigma} > r_n\right).
\end{align*}
Additionally from the definition of $\Delta_n$, we get for any convex set $A\subseteq\mathbb{R}^p$,
\begin{align*}
&\left|\mathbb{P}\left(\Sigma^{1/2}(\hat{\beta} - \beta)\in A\right) - \mathbb{P}\left(\Sigma^{-1/2}(\hat{\Gamma} - \hat{\Sigma}\beta) \in A\right)\right|\\ &\quad\le \mathbb{P}\left(K^{1/2}N(0, I_d) \in A^{r_n\eta}\setminus A\right) + 2\Delta_n + \mathbb{P}\left(\mathcal{D}^{\Sigma} > \eta\right) + \mathbb{P}\left(\|\Sigma^{-1}(\hat{\Gamma} - \hat{\Sigma}\beta)\|_{\Sigma} > r_n\right).
\end{align*}
Recall that $N(0, I_d)$ represents a standard normal random vector. Now we get, from Lemma 2.6 of~\cite{bentkus2003dependence} and the discussion following, that there exists a constant $c_2 > 0$ such that
$\sup_{A\in\mathcal{C}_d}\mathbb{P}\left(K^{1/2}N(0, I_d) \in A^{r_n\eta}\setminus A\right) \le c_2\|K^{-1}\|_{*}^{1/4}r_n\eta,$
where $\|M\|_{*}$ for a matrix $M\in\mathbb{R}^{p\times p}$ denotes the nuclear norm of $M$. Hence
\begin{align*}
&\sup_{A\in\mathcal{C}_d}\left|\mathbb{P}\left(\Sigma^{1/2}(\hat{\beta} - \beta)\in A\right) - \mathbb{P}\left(\Sigma^{-1/2}(\hat{\Gamma} - \hat{\Sigma}\beta) \in A\right)\right|\\ &\quad\le c_2\|K^{-1}\|_*^{1/4}r_n\eta + 2n^{-1} + 3\Delta_n + \mathbb{P}\left(\mathcal{D}^{\Sigma} > \eta\right).
\end{align*}
Here we have used inequality~\eqref{eq:BoundEstimErr}. Finally, from the definition of $\Delta_n$, we get
\begin{align*}
&\sup_{A\in\mathcal{C}_d}\left|\mathbb{P}\left(\Sigma^{1/2}(\hat{\beta} - \beta)\in A\right) - \mathbb{P}\left(K^{1/2}N(0, I_d) \in A\right)\right|\\ &\quad\le c_2\|K^{-1}\|_*^{1/4}r_n\eta + 2n^{-1} + 4\Delta_n + \mathbb{P}\left(\mathcal{D}^{\Sigma} > \eta\right).
\end{align*}
Since $\mathcal{C}_d$ is invariant under linear transformations, the result follows.
\end{proof}
% \section{A Preliminary Inequality}\label{appsec:PrelimIneq}
% \begin{lem}
% Suppose $A, B$ are positive semi-definite matrices.
% Then we have
% \begin{align*}
% \|x\|_A &\ge \lambda_{\min}^{1/2}(B^{-1/2}AB^{-1/2})\|x\|_B\\ &\ge \lambda_{\min}^{1/2}(B^{-1/2}AB^{-1/2})\lambda_{\min}(\texttt{corr}(B))\|\texttt{diag}(B)^{1/2}x\|_{\infty}.
% \end{align*}
% \end{lem}
% \begin{proof}
% From the definition of $\|x\|_A$, it follows that
% \[
% \|x\|_A = \sqrt{x^{\top}Ax} = \sqrt{x^{\top}B^{1/2}B^{-1/2}AB^{-1/2}B^{1/2}x} \ge \sqrt{L^2x^{\top}Bx} = L\|x\|_B,
% \]
% where $L^2 := \lambda_{\min}^{1/2}(B^{-1/2}AB^{-1/2})$. Further
% \begin{align*}
% \|x\|_B^2 = x^{\top}Bx &= (\texttt{diag}(B)^{1/2}x)^{\top}\texttt{diag}(B)^{-1/2}B\texttt{diag}(B)^{-1/2}(\texttt{diag}(B)^{1/2}x)\\
% &= (\texttt{diag}(B)^{1/2}x)^{\top}\texttt{corr}(B)(\texttt{diag}(B)^{1/2}x)\\
% &\ge \lambda_{\min}(\texttt{corr}(B))\|\texttt{diag}(B)^{1/2}x\|\\
% &\ge \lambda_{\min}(\texttt{corr}(B))\|\texttt{diag}(B)^{1/2}x\|_{\infty}.
% \end{align*}
% This completes the proof.
% \end{proof}
\section{Proof of Corollary~\ref{cor:BEOLS-Varaible-Selection}}\label{AppSec:ProofBEOLS-Variable-Selection}
We first prove a version of Corollary~\ref{cor:Uniform-in-Submodel} for the purpose of normal approximation with $\|\cdot\|_{\Sigma_M}$ replaced by $\|\cdot\|_{\Sigma_MV_M^{-1}\Sigma_M}$. We start with equality before~\eqref{eq:Almost-Final-Bound} in the proof of Theorem~\ref{thm:Deterministic-Ineq} for model $M$:
\[
\Sigma_M^{1/2}\left[\hat{\beta}_M - \beta_M - \Sigma_M^{-1}(\hat{\Gamma}_M - \hat{\Sigma}_M\beta_M)\right] ~=~ (I_{|M|} - \Sigma_M^{-1/2}\hat{\Sigma}_M\Sigma_M^{-1/2})\Sigma_M^{1/2}(\hat{\beta}_M - \beta_M).
\]
Multiplying both sides by $V_M^{-1/2}$ and applying Euclidean norm, we get
\begin{align*}
&\|\hat{\beta}_M - \beta_M - \Sigma_M^{-1}(\hat{\Gamma}_M - \hat{\Sigma}_M\beta_M)\|_{\Sigma_MV_M^{-1}\Sigma_M}\\ 
&\qquad\le \|V_M^{-1/2}(I_{|M|} - \Sigma_M^{-1/2}\hat{\Sigma}_M\Sigma_M^{-1/2})V_M^{1/2}V_M^{-1/2}\Sigma_M^{1/2}(\hat{\beta}_M - \beta_M)\|\\
&\qquad\le \|V_M^{-1/2}(I_{|M|} - \Sigma_M^{-1/2}\hat{\Sigma}_M\Sigma_M^{-1/2})V_M^{1/2}\|_{op}\|\hat{\beta}_M - \beta_M\|_{\Sigma_MV_M^{-1}\Sigma_M}\\
&\qquad= \mathcal{D}_M^{\Sigma}\|\hat{\beta}_M - \beta_M\|_{\Sigma_MV_M^{-1}\Sigma_M}.
\end{align*}
The last equality above follows from the fact that $\|AB\|_{op} = \|BA\|_{op}$. This implies
\begin{equation}\label{eq:Reformulation-corollary-uniform}
\|\hat{\beta}_M - \beta_M - \Sigma_M^{-1}(\hat{\Gamma}_M - \hat{\Sigma}_M\beta_M)\|_{\Sigma_MV_M^{-1}\Sigma_M} \le \frac{\mathcal{D}^{\Sigma}_M}{(1 - \mathcal{D}_M^{\Sigma})_+}\|\Sigma_M^{-1}(\hat{\Gamma}_M - \hat{\Sigma}_M\beta_M)\|_{\Sigma_MV_M^{-1}\Sigma_M}.
\end{equation}
% In this formulation of Corollary~\ref{cor:Uniform-in-Submodel}, we note that the right hand side quantity $\|\Sigma_M^{-1}(\hat{\Gamma}_M - \hat{\Sigma}_M\beta_M)\|_{\Sigma_MV_M^{-1}\Sigma_M}$ behaves like a constant with respect to $n$ since
% \[
% \mathbb{E}\|\Sigma_M^{-1}(\hat{\Gamma}_M - \hat{\Sigma}_M\beta_M)\|_{\Sigma_MV_M^{-1}\Sigma_M} \le \sqrt{\mbox{trace}(\mbox{Var}(V_M^{-1/2}(\hat{\Gamma}_M - \hat{\Sigma}_M\beta_M)))} = |M|^{1/2}.
% \]
Observe now that for any $x\in\mathbb{R}^{|M|}$ and any invertible matrix $A$,
\begin{align}\label{eq:Scaled-Euclidean-Maximum-Comparison}
\begin{split}
\|x\|_A = \|A^{1/2}x\| = \max_{\theta\in\mathbb{R}^{|M|}}\frac{\theta^{\top}x}{\sqrt{\theta^{\top}A^{-1}\theta}} \ge \max_{\substack{\theta=\pm e_j,\\1\le j\le |M|}}\frac{|\theta^{\top}x|}{\sqrt{\theta^{\top}A^{-1}\theta}} = \max_{1\le j\le |M|}\,\frac{|x_j|}{\sqrt{(A^{-1})_j}}. 
\end{split}
\end{align}
\iffalse
{\color{red}
\begin{align}
\begin{split}
\max_{1\le j\le |M|}\frac{|x_j|}{\sqrt{(A^{-1})_j}} &\le \|(\texttt{diag}(A^{-1}))^{-1/2}x\| = \|x\|_A\frac{\|(\texttt{diag}(A^{-1}))^{-1/2}x\|}{\|A^{1/2}x\|}\\
&\le \|x\|_A\|(\texttt{diag}(A^{-1}))^{-1/2}A^{-1}(\texttt{diag}(A^{-1}))^{-1/2}\|_{op}\\
&= \|x\|_A\|\texttt{corr}(A^{-1})\|_{op}.
\end{split}
\end{align}}
\fi
Therefore, combining~\eqref{eq:Reformulation-corollary-uniform} and~\eqref{eq:Scaled-Euclidean-Maximum-Comparison}, we get for all $M\in\mathcal{M}$,
\[
\max_{1\le j\le |M|}\,\frac{|(\hat{\beta}_M - \beta_M - \Sigma_M^{-1}(\hat{\Gamma}_M - \hat{\Sigma}_M\beta_M))_j|}{\sqrt{(\Sigma_M^{-1}V_M\Sigma_M^{-1})_j}} \le \frac{\mathcal{D}_M^{\Sigma}\|\Sigma_M^{-1}(\hat{\Gamma}_M - \hat{\Sigma}_M\beta_M)\|_{\Sigma_MV_M^{-1}\Sigma_M}}{(1 - \mathcal{D}_M^{\Sigma})_+}.
\]
From the definition of the $1/2$-net, it follows that
\[
\|\Sigma_M^{-1}(\hat{\Gamma}_M - \hat{\Sigma}_M\beta_M)\|_{\Sigma_MV_M^{-1}\Sigma_M} \le 2\max_{\theta\in\mathcal{N}_{|M|}^{1/2}} \theta^{\top}V_M^{-1/2}(\hat{\Gamma}_M - \hat{\Sigma}_M\beta_M).
\]
See, e.g.,~\citet[Theorem 1.19]{rigollet2015high}. Therefore, for all $M\in\mathcal{M}$,
\begin{equation}\label{eq:First-maximum-Bound-L2-on-right}
\max_{1\le j\le |M|}\,\frac{|(\hat{\beta}_M - \beta_M - \Sigma_M^{-1}(\hat{\Gamma}_M - \hat{\Sigma}_M\beta_M))_j|}{\sqrt{(\Sigma_M^{-1}V_M\Sigma_M^{-1})_j}} \le \frac{2\mathcal{D}_M^{\Sigma}\max_{\theta\in\mathcal{N}_{|M|}^{1/2}}\,\theta^{\top}V_M^{-1/2}(\hat{\Gamma}_M - \hat{\Sigma}_M\beta_M)}{(1 - \mathcal{D}_M^{\Sigma})_+}.
\end{equation}
Using the definition of $\Xi_{n,\mathcal{M}}$, we can control $\max_{\theta\in\mathcal{N}_{|M|}^{1/2}}\theta^{\top}V_M^{-1/2}(\hat{\Gamma}_M - \hat{\Sigma}_M\beta_M)$. Observe first that
\begin{align}\label{eq:First-union-bound}
\begin{split}
&\mathbb{P}\left(\max_{M\in\mathcal{M}}\max_{\theta\in\mathcal{N}_{|M|}^{1/2}}\frac{\theta^{\top}\bar{G}_{M}}{\sqrt{2\log(|\mathcal{M}|5^{|M|}\pi_{|M|}) + 2\log(|M|^2/\Xi_{n,\mathcal{M}})}} \ge 1\right)\\
&\qquad\le \sum_{s = 1}^d \mathbb{P}\left(\max_{M\in\mathcal{M}, |M| = s}\max_{\theta\in\mathcal{N}_{s}^{1/2}}{\theta^{\top}\bar{G}_M} \ge \sqrt{2\log(|\mathcal{M}|5^{s}\pi_{s}) + 2\log(s^2/\Xi_{n,\mathcal{M}})}\right).
\end{split}
\end{align}
Since $\bar{G}_M$ is a standard normal random vector for each $M\in\mathcal{M}$, $\theta^{\top}\bar{G}_{M}$ is a standard Gaussian random variable and it follows from~\citet[Theorem 1.14]{rigollet2015high} that for all $t\ge0$,
\[
\mathbb{P}\left(\max_{M\in\mathcal{M},|M| = s}\,\max_{\theta\in\mathcal{N}_{|M|}^{1/2}}\theta^{\top}\bar{G}_M \ge \sqrt{2\log(|\mathcal{M}|5^s\pi_s) + 2t}\right) \le \exp(-t),
\]
Taking $t = \log(s^2/\Delta_{n,M})$ yields
\[
\mathbb{P}\left(\max_{M\in\mathcal{M}, |M| = s}\max_{\theta\in\mathcal{N}_{s}^{1/2}}{\theta^{\top}\bar{G}_M} \ge \sqrt{2\log(|\mathcal{M}|5^{s}\pi_{s}) + 2\log(s^2/\Xi_{n,\mathcal{M}})}\right) \le \frac{\Xi_{n,\mathcal{M}}}{s^2}.
\]
Combining this with~\eqref{eq:First-union-bound} and using $\sum_{s = 1}^d s^{-2} \le \pi^2/6 < 1.65$, we get
\[
\mathbb{P}\left(\max_{M\in\mathcal{M}}\max_{\theta\in\mathcal{N}_{|M|}^{1/2}}\frac{\theta^{\top}\bar{G}_{M}}{\sqrt{2\log(|\mathcal{M}|5^{|M|}\pi_{|M|}) + 2\log(|M|^2/\Xi_{n,\mathcal{M}})}} \ge 1\right) \le 1.65\Xi_{n,\mathcal{M}}.
\]
From the definition of $\Xi_{n,M}$, it follows that
\[
\mathbb{P}\left(\max_{M\in\mathcal{M}}\max_{\theta\in\mathcal{N}_{|M|}^{1/2}}\frac{\theta^{\top}V_M^{-1/2}(\hat{\Gamma}_M - \hat{\Sigma}_M\beta_M)}{\sqrt{2\log(|\mathcal{M}|5^{|M|}\pi_{|M|}) + 2\log(|M^2|/\Xi_{n,\mathcal{M}})}} > 1\right) \le 2.65\Xi_{n,\mathcal M}.
\]
Hence for any $(\eta_M)_{M\in\mathcal{M}} (\le 1/2)$, on an event with probability at least $1 - 2.65\Xi_{n,M} - \mathbb{P}(\cup_{M\in\mathcal{M}}\{\mathcal{D}_M^{\Sigma} \ge \eta_M\})$, we get
\begin{equation}\label{eq:Almost-final-variable-selection-normal-approx}
\max_{1\le j\le |M|}\,\frac{|(\hat{\beta}_M - \beta_M - \Sigma_M^{-1}(\hat{\Gamma}_M - \hat{\Sigma}_M\beta_M))_j|}{\sqrt{(\Sigma_M^{-1}V_M\Sigma_M^{-1})_j}} \le {4\eta_M\sqrt{2\log(|\mathcal{M}|5^{|M|}|M|^2\pi_{|M|}/\Xi_{n,M})}}.
\end{equation}
Define a vector $\varepsilon\in\mathbb{R}^{\sum_{M\in\mathcal{M}}|M|}$ indexed by $M\in\mathcal{M}, 1\le j\le |M|$ such that 
\[
\varepsilon_{M,j} := {4\eta_M\sqrt{2\log(|\mathcal{M}|5^{|M|}|M|^2\pi_{|M|}/\Xi_{n,M})}}.
\]
Fix any set $A\in\mathcal{A}^{sre}$. Then from~\eqref{eq:Almost-final-variable-selection-normal-approx}, we get
\begin{align*}
\mathbb{P}\left(\left(\frac{(\hat{\beta}_M - \beta_M)_j}{\sqrt{(\Sigma_M^{-1}V_M\Sigma_M^{-1})_j}}\right)_{\substack{M\in\mathcal{M},\\1\le j\le |M|}} \in A\right) &\le \mathbb{P}\left(\left(\frac{(\Sigma_M^{-1}(\hat{\Gamma}_M - \hat{\Sigma}_M\beta_M))_j}{\sqrt{(\Sigma_M^{-1}V_M\Sigma_M^{-1})_j}}\right)_{\substack{M\in\mathcal{M},\\1\le j\le |M|}} \in A + \varepsilon\right)\\
&\qquad+ 2.65\Xi_{n,M} + \mathbb{P}\left(\bigcup_{M\in\mathcal{M}}\,\{\mathcal{D}_M^{\Sigma} \ge \eta_M\}\right),
\end{align*}
and
\begin{align*}
\mathbb{P}\left(\left(\frac{(\hat{\beta}_M - \beta_M)_j}{\sqrt{(\Sigma_M^{-1}V_M\Sigma_M^{-1})_j}}\right)_{\substack{M\in\mathcal{M},\\1\le j\le |M|}} \in A\right) &\ge \mathbb{P}\left(\left(\frac{(\Sigma_M^{-1}(\hat{\Gamma}_M - \hat{\Sigma}_M\beta_M))_j}{\sqrt{(\Sigma_M^{-1}V_M\Sigma_M^{-1})_j}}\right)_{\substack{M\in\mathcal{M},\\1\le j\le |M|}} \in A - \varepsilon\right)\\
&\qquad- 2.65\Xi_{n,M} - \mathbb{P}\left(\bigcup_{M\in\mathcal{M}}\,\{\mathcal{D}_M^{\Sigma} \ge \eta_M\}\right),
\end{align*}
Hence the result follows from the definition of $\Delta_{n,\mathcal{M}}$.
\section{Proof of Proposition~\ref{prop:Rates-D_M-Gamma_M}}\label{AppSec:Proof-of-prop-Rates-D_M-Gamma_M}
Observe that 
\begin{equation}\label{eq:Operator-to-finite-maximum}
\mathcal{D}_M^{\Sigma} = \|\Sigma_M^{-1/2}\hat{\Sigma}_M\Sigma_M^{-1/2} - I_{|M|}\|_{op} \le 2\sup_{\nu\in\mathcal{N}^{1/4}_{|M|}}\left|\frac{1}{n}\sum_{i=1}^n (\nu^{\top}\Sigma_M^{-1/2}X_{i,M})^2 - 1\right|,
\end{equation}
where $\mathcal{N}^{1/4}_{|M|}$ represents the $1/4$-net of $\{\theta\in\mathbb{R}^{|M|}:\,\|\theta\| = 1\}$; see Lemma 2.2 of~\cite{Ver12}. Note that $|\mathcal{N}^{1/4}_{|M|}| \le 9^{|M|}$. Therefore the right hand side of~\eqref{eq:Operator-to-finite-maximum} is a maximum over a finite number of mean zero averages with summands satisfying
\[
\mathbb{E}\left[\exp\left({\mathfrak{K}_{\beta}^{-\beta}|\nu^{\top}\Sigma_M^{-1/2}X_{i,M}|^{\beta}}\right)\right] \le 2,\mbox{ for all }\nu\in\mathcal{N}^{1/4}_{|M|}\mbox{ and }M\subseteq\{1,2,\ldots,d\}.
\]
Applying Theorem 3.4 of~\cite{KuchAbhi17}, we get for any $t\ge0$ that with probability $1 - 3e^{-t}$,
\[
\mathcal{D}_M^{\Sigma} \le 14\sqrt{\frac{\kappa_M^{\Sigma}(t + |M|\log(9))}{n}} + \frac{C_{\beta}\mathfrak{K}_{\beta}^2(\log(2n))^{2/\beta}(t + |M|\log(9))^{\max\{1,2/\beta\}}}{n},
\]
for some constant $C_{\beta} > 0$ depending only $\beta$.
Since there are $\binom{d}{s} \le (ed/s)^s$ models of size $s$, taking $t = s\log(ed/s) + u$ (for any $u\ge0$) and applying union bound over all models of size $s$, we get that with probability $1 - 3e^{-u}$, simultaneously for all $M\subseteq\{1,2,\ldots,d\}$ with $|M| = s$,
\[
\mathcal{D}_M^{\Sigma} \le 14\sqrt{\frac{\kappa_M^{\Sigma}(u + {s\log(9ed/s)})}{n}} + \frac{C_{\beta}\mathfrak{K}_{\beta}^{2}(\log(2n))^{2/\beta}(u + s\log(9ed/s))^{\max\{1, 2/\beta\}}}{n}.
\]
To prove the result simultaneously over all $1\le s \le d$, take $u = v + \log(\pi^2s^2/6)$ and apply union bound over $1\le s\le d$ to get with probability $1 - 3e^{-v}$ simulataneously over all $M\subseteq\{1,2,\ldots,d\}$ with $|M| = s$ for some $1\le s\le d$,
\begin{align*}
\mathcal{D}_M^{\Sigma} &\le 14\sqrt{\frac{\kappa_M^{\Sigma}(v + \log(\pi^2s^2/6) + {s\log(9ed/s)})}{n}}\\ 
&\qquad+ \frac{C_{\beta}\mathfrak{K}_{\beta}^{2}(\log(2n))^{2/\beta}(v + \log(\pi^2s^2/6) + s\log(9ed/s))^{\max\{1, 2/\beta\}}}{n}.
\end{align*}
Since $s^{-1}\log(\pi^2s^2/6) \le (2\pi/\sqrt{6})\sup_{x \ge \pi/\sqrt{6}}\,\exp(-x)x \le 1$, we get with probability $1 - 3e^{-v}$ simultaneously for any $1\le s\le d$ and for any model $M\subseteq\{1,2,\ldots,d\}$ with $|M| = s$,
\[
\mathcal{D}_M^{\Sigma} \le 14\sqrt{\frac{\kappa_M^{\Sigma}(v + {s\log(9e^2d/s)})}{n}} + \frac{C_{\beta}\mathfrak{K}_{\beta}^{2}(\log(2n))^{2/\beta}(v + s\log(9e^2d/s))^{\max\{1, 2/\beta\}}}{n}.
\]
This completes the proof of~\eqref{eq:Tail-Bound-D_M-Simultaneous}.

We now bound $\|\Sigma_M^{-1/2}(\hat{\Gamma}_M - \hat{\Sigma}_M\beta_M)\|$ simultaneously over all $M$. Observe from the definition of $\beta_M$ that
\[
0 \le \sum_{i=1}^n \mathbb{E}\left[(Y_i - X_{i,M}^{\top}\beta_M)^2\right] = \sum_{i=1}^n \mathbb{E}[Y_i^2] - \sum_{i=1}^n \mathbb{E}[(X_{i,M}^{\top}\beta_M)^2],
\]
and hence $\|\tilde{\beta}_M\| = \|\Sigma_M^{1/2}\beta_M\| \le (\sum_{i=1}^n \mathbb{E}[Y_i^2]/n)^{1/2}$. Now note that since $\mathbb{E}[\hat{\Gamma}_M - \hat{\Sigma}_M\beta_M] = 0$ (from the definition of $\beta_M$), we have
\begin{align*}
\|\Sigma_M^{-1/2}(\hat{\Gamma}_M - \hat{\Sigma}_M\beta_M)\| &= \|\Sigma_M^{-1/2}(\hat{\Gamma}_M - \mathbb{E}\hat{\Gamma}_M) - \Sigma_M^{-1/2}(\hat{\Sigma}_M - \Sigma_M)\beta_M\|\\
&\le \|\Sigma_M^{-1/2}(\hat{\Gamma}_M - \mathbb{E}\hat{\Gamma}_M)\| + \|\Sigma_M^{-1/2}(\hat{\Sigma}_M - \Sigma_M)\Sigma_M^{-1/2}\|_{op}\|\Sigma_M^{1/2}\beta_M\|\\
&\le \|\Sigma_M^{-1/2}(\hat{\Gamma}_M - \mathbb{E}\hat{\Gamma}_M)\| + \mathcal{D}_M^{\Sigma}{\textstyle(\sum_{i=1}^n \mathbb{E}[Y_i^2]/n)^{1/2}}.
\end{align*}
We have already controlled $\mathcal{D}_M^{\Sigma}$ uniformly over all models $M\subseteq\{1,2,\ldots,d\}$ and hence it is enough to control $\|\Sigma_M^{-1/2}(\hat{\Gamma}_M - \mathbb{E}\hat{\Gamma}_M)\|$. As before,
observe that
\begin{align*}
\|\Sigma_M^{-1/2}(\hat{\Gamma}_M - \mathbb{E}\hat{\Gamma}_M)\| \le 2\max_{\nu\in\mathcal{N}_{|M|}^{1/2}}\left|\frac{1}{n}\sum_{i=1}^n \left\{\nu^{\top}\tilde{X}_{i,M}Y_i - \mathbb{E}[\nu^{\top}\tilde{X}_{i,M}Y_i]\right\}\right| =: 2\mathcal{E}_M,
\end{align*}
where $\tilde{X}_{i,M} := \Sigma_M^{-1/2}X_{i,M}$. To control $\mathcal{E}_M$ we split $Y_i$ in to two parts depending on whether $\{|Y_i| \le B\}$ or $\{|Y_i| > B\}$ (for a $B$ to be chosen later). Define $Y_{i,1} = Y_i\mathbbm{1}\{|Y_i| \le B\}$, $Y_{i,2} = Y_i - Y_{i,1}$ and for $\ell = 1,2$,
\[
\mathcal{E}_{M,\ell} := \max_{\nu\in\mathcal{N}_{|M|}^{1/2}}\left|\frac{1}{n}\sum_{i=1}^n \left\{\nu^{\top}\tilde{X}_{i,M}Y_{i,\ell} - \mathbb{E}[\nu^{\top}\tilde{X}_{i,M}Y_{i,\ell}]\right\}\right|.
\]
Since $|Y_{i,1}| \le B$, we have for any $\nu\in\mathcal{N}_{|M|}^{1/2}$ and $M\subseteq\{1,2,\ldots,d\}$ that
\[
\mathbb{E}\left[\exp\left(\frac{|\nu^{\top}\tilde{X}_{i,M}Y_{i,1}|^{\beta}}{(B\mathfrak{K}_{\beta})^{\beta}}\right)\right] \le 2.
\]
Hence we get by Theorem 3.4 of~\cite{KuchAbhi17} that for any $t \ge 0$, with probability $1 - 3e^{-t}$
\[
\mathcal{E}_{M,1} \le 7\sqrt{\frac{\mathfrak{V}_M(t + |M|\log(5))}{n}} + \frac{C_{\beta}B\mathfrak{K}_\beta(\log(2n))^{1/\beta}(t + |M|\log(5))^{\max\{1,1/\beta\}}}{n}.
\]
Now following same approach as used for $\mathcal{D}_M^{\Sigma}$, we get with probability $1 - 3e^{-u}$, for any $1 \le s \le d$, for any model $M\subseteq\{1,2,\ldots,d\}$ such that $|M| = s$,
\begin{equation}\label{eq:bound-Em1}
\mathcal{E}_{M,1} \le 7\sqrt{\frac{\mathfrak{V}_M(v + s\log(5e^2d/s))}{n}} + \frac{C_{\beta}B\mathfrak{K}_{\beta}(\log(2n))^{1/\beta}(v + s\log(5e^2d/s))^{\max\{1,1/\beta\}}}{n}.
\end{equation}
To bound $\mathcal{E}_{M,2}$ simultaneously over all $M$, we take
\[
B := 8\mathbb{E}\left[\max_{1\le i\le n}|Y_i|\right] \le 8n^{1/r}\max_{1\le i\le n}\left(\mathbb{E}[|Y_i|^r]\right)^{1/r} = 8n^{1/r}K_{n,r},
\] 
which is motivated by Proposition 6.8 of~\cite{LED91}. Now consider the normalized process
\[
{\mathcal{E}}_{2,\texttt{Norm}} := \max_{1\le s\le d}\max_{|M| = s}\,\frac{n^{1/2}\mathcal{E}_{M,2}}{n^{-1/2 + 1/r}K_{n,r}\mathfrak{K}_{\beta}(s\log(5e^2d/s) + \log n)^{1/\beta}}.
\]
Observe first that $\mathcal{E}_{2,\texttt{Norm}} \le \mathcal{E}^{(1)} + \mathbb{E}[\mathcal{E}^{(1)}]$, where
\[
\mathcal{E}^{(1)} = \frac{1}{n}\sum_{i=1}^n \max_{\substack{1\le s\le d,\\|M|=s}}\max_{\nu\in\mathcal{N}_s^{1/2}}\frac{n^{1/2}|\nu^{\top}\tilde{X}_{i,M}Y_{i,2}|}{n^{-1/2 + 1/r}K_{n,r}\mathfrak{K}_{\beta}(s\log(5e^2d/s) + \log n)^{1/\beta}}.
\]
Note that $\mathcal{E}^{(1)}$ is an average of non-negative random variables and hence by the choice of $B$ above and Proposition 6.8 of~\cite{LED91}, we get
\begin{align}\label{eq:Proposition-6_8-LED}
\begin{split}
\mathbb{E}\left[\mathcal{E}^{(1)}\right] &\le 8\mathbb{E}\left[\frac{1}{n}\max_{1\le i\le n}\max_{\substack{1\le s\le d,\\|M|=s}}\max_{\nu\in\mathcal{N}_s^{1/2}}\frac{n^{1/2}|\nu^{\top}\tilde{X}_{i,M}Y_{i,2}|}{n^{-1/2 + 1/r}K_{n,r}\mathfrak{K}_{\beta}(s\log(5e^2d/s) + \log n)^{1/\beta}}\right]\\
&\le 8\mathbb{E}\left[\max_{1\le i\le n}\max_{\substack{1\le s\le d,\\|M|=s}}\max_{\nu\in\mathcal{N}_s^{1/2}}\frac{n^{-1/2}|\nu^{\top}\tilde{X}_{i,M}Y_{i}|}{n^{-1/2 + 1/r}K_{n,r}\mathfrak{K}_{\beta}(s\log(5e^2d/s) + \log n)^{1/\beta}}\right]\\
&\le 8\left\|\max_{1\le i\le n}\frac{|Y_i|}{K_{n,r}n^{1/r}}\right\|_2\left\|\max_{1\le s\le d}\max_{\substack{1\le i\le n,\\|M|=s}}\max_{\nu\in\mathcal{N}_s^{1/2}}\frac{|\nu^{\top}\tilde{X}_{i,M}|}{\mathfrak{K}_{\beta}(s\log(5e^2d/s) + \log n)^{1/\beta}}\right\|_2.
\end{split}
\end{align}
Here we use $\|W\|_2$ for a random variable $W$ to denote $(\mathbb{E}[W^2])^{1/2}$. In the second factor, the number of items in the maximum for any fixed $s$ is given by $n\binom{d}{s}5^s \le n(5ed/s)^s$ and hence from~\eqref{eq:Sub-Weibull-sub-model-Covariance-Scaled}, we get
\[
\mathbb{P}\left(\max_{\substack{1\le i\le n,\\|M|=s}}\max_{\nu\in\mathcal{N}_s^{1/2}}|\nu^{\top}\tilde{X}_{i,M}| \ge \mathfrak{K}_{\beta}(t + s\log(5ed/s) + \log(n))^{1/\beta}\right) \le 2e^{-t},
\] 
and an application of union bound over $1\le s\le d$ yields
\[
\mathbb{P}\left(\bigcup_{1\le s\le d}\left\{\max_{\substack{1\le i\le n,\\|M|=s}}\max_{\nu\in\mathcal{N}_s^{1/2}}|\nu^{\top}\tilde{X}_{i,M}| \ge \mathfrak{K}_{\beta}(t + \log(\pi^2s^2/6) + s\log(5ed/s) + \log(n))^{1/\beta}\right\}\right) \le 2e^{-t},
\]
which implies
\begin{equation}\label{eq:Tail-bound-maximum-envelope}
\mathbb{P}\left(\bigcup_{1\le s\le d}\left\{\max_{\substack{1\le i\le n,\\|M|=s}}\max_{\nu\in\mathcal{N}_s^{1/2}}|\nu^{\top}\tilde{X}_{i,M}| \ge \mathfrak{K}_{\beta}(t + s\log(5e^2d/s) + \log(n))^{1/\beta}\right\}\right) \le 2e^{-t}.
\end{equation}
Hence for a constant $C_{\beta} > 0$ (depending only on $\beta$),
\begin{equation}\label{eq:Second-factor}
\left\|\max_{1\le s\le d}\max_{\substack{1\le i\le n,\\|M|=s}}\max_{\nu\in\mathcal{N}_s^{1/2}}\frac{|\nu^{\top}\tilde{X}_{i,M}|}{\mathfrak{K}_{\beta}(s\log(5e^2d/s) + \log n)^{1/\beta}}\right\|_2 \le C_{\beta}.
\end{equation}
For the first factor in~\eqref{eq:Proposition-6_8-LED}, note that (since $r\ge 2$)
\begin{equation}\label{eq:first-factor}
\left\|\max_{1\le i\le n}\frac{|Y_i|}{K_{n,r}n^{1/r}}\right\|_2 \le \left\|\max_{1\le i\le n}\frac{|Y_i|}{K_{n,r}n^{1/r}}\right\|_r \le \left(\sum_{i=1}^n \mathbb{E}\left[\frac{|Y_i|^r}{K_{n,r}^rn}\right]\right)^{1/r} \le 1.
\end{equation}
Substituting the bounds~\eqref{eq:first-factor} and~\eqref{eq:Second-factor} in~\eqref{eq:Proposition-6_8-LED} yields
\begin{equation}\label{eq:Expectation-bound-E2}
\mathbb{E}[\mathcal{E}_{2,\texttt{Norm}}] \le 2\mathbb{E}[\mathcal{E}^{(1)}] \le C_{\beta},
\end{equation}
for a constant $C_{\beta} > 0$ (which is different from the one in~\eqref{eq:Second-factor}). 
% Observe now that $\mathcal{E}_{2,\texttt{Norm}} \le \mathcal{E}^{(1)} + \mathbb{E}[\mathcal{E}^{(1)}]$, where
% \[
% \mathcal{E}^{(1)} = \frac{1}{n}\sum_{i=1}^n \max_{\substack{1\le s\le d,\\|M|=s}}\max_{\nu\in\mathcal{N}_s^{1/2}}\frac{n^{1/2}|\nu^{\top}\tilde{X}_{i,M}Y_{i,2}|}{n^{-1/2 + 1/r}K_{n,r}\mathfrak{K}_{\beta}(s\log(5e^2d/s) + \log n)^{1/\beta}}.
% \]
Applying Theorem 8 of~\cite{Bouch05} now yields for every $q \ge 1$
\[
\|\mathcal{E}^{(1)}\|_q \le 2\mathbb{E}[\mathcal{E}^{(1)}] + Cq\left\|\frac{1}{n}\max_{1\le i\le n}\max_{\substack{1\le s\le d,\\|M|=s}}\max_{\nu\in\mathcal{N}_s^{1/2}}\frac{n^{1/2}|\nu^{\top}\tilde{X}_{i,M}Y_{i,2}|}{n^{-1/2 + 1/r}K_{n,r}\mathfrak{K}_{\beta}(s\log(5e^2d/s) + \log n)^{1/\beta}}\right\|_q,
\]
for some (other) absolute constant $C > 0$. This implies (using~\eqref{eq:Expectation-bound-E2}) that
\[
\|\mathcal{E}_{2,\texttt{Norm}}\|_q \le 3C_{\beta} + Cq\left\|\frac{1}{n}\max_{1\le i\le n}\max_{\substack{1\le s\le d,\\|M|=s}}\max_{\nu\in\mathcal{N}_s^{1/2}}\frac{n^{1/2}|\nu^{\top}\tilde{X}_{i,M}Y_{i,2}|}{n^{-1/2 + 1/r}K_{n,r}\mathfrak{K}_{\beta}(s\log(5e^2d/s) + \log n)^{1/\beta}}\right\|_q.
\]
As before, we have
\begin{align*}
&\left\|\frac{1}{n}\max_{1\le i\le n}\max_{\substack{1\le s\le d,\\|M|=s}}\max_{\nu\in\mathcal{N}_s^{1/2}}\frac{n^{1/2}|\nu^{\top}\tilde{X}_{i,M}Y_{i,2}|}{n^{-1/2 + 1/r}K_{n,r}\mathfrak{K}_{\beta}(s\log(5e^2d/s) + \log n)^{1/\beta}}\right\|_q\\
%&\qquad= \left\|\max_{1\le i\le n}\max_{\substack{1\le s\le d,\\|M|=s}}\max_{\nu\in\mathcal{N}_s^{1/2}}\frac{|\nu^{\top}\tilde{X}_{i,M}Y_{i,2}|}{n^{1/r}K_{n,r}\mathfrak{K}_{\beta}(s\log(5e^2d/s) + \log n)^{1/\beta}}\right\|_q\\
&\qquad\le \left\|\max_{1\le i\le n}\frac{|Y_i|}{K_{n,r}n^{1/r}}\max_{1\le i\le n}\max_{\substack{1\le s\le d,\\|M|=s}}\max_{\nu\in\mathcal{N}_s^{1/2}}\frac{|\nu^{\top}\tilde{X}_{i,M}|}{\mathfrak{K}_{\beta}(s\log(5e^2d/s) + \log n)^{1/\beta}}\right\|_q\\
&\qquad\le \left\|\max_{1\le i\le n}\frac{|Y_i|}{K_{n,r}n^{1/r}}\right\|_{r}\left\|\max_{1\le i\le n}\max_{\substack{1\le s\le d,\\|M|=s}}\max_{\nu\in\mathcal{N}_s^{1/2}}\frac{|\nu^{\top}\tilde{X}_{i,M}|}{\mathfrak{K}_{\beta}(s\log(5e^2d/s) + \log n)^{1/\beta}}\right\|_{rq/(r-q)}.
\end{align*}
where the last inequality holds for any $q < r$ by H{\"o}lder's inequality. We already have that the first factor is bounded be $1$. From~\eqref{eq:Tail-bound-maximum-envelope}, we have
\[
\left\|\max_{1\le i\le n}\max_{\substack{1\le s\le d,\\|M|=s}}\max_{\nu\in\mathcal{N}_s^{1/2}}\frac{|\nu^{\top}\tilde{X}_{i,M}|}{\mathfrak{K}_{\beta}(s\log(5e^2d/s) + \log n)^{1/\beta}}\right\|_{rq/(r-q)} \le C_{\beta}\left(\frac{rq}{r-q}\right)^{1/\beta}.
\]
Therefore taking $q = r-1$, we get
\[
\|\mathcal{E}_{2,\texttt{Norm}}\|_{r-1} \le 3C_{\beta} + CC_{\beta}(r-1)(r(r-1))^{1/\beta} =: C_{\beta, r}.
\]
Hence by Markov's inequality, we get with probability at least $1 - 1/t^{r-1}$, for any $1\le s \le d$, for any model $M\subseteq\{1,2,\ldots,d\}$ such that $|M|=s$,
\begin{equation}\label{eq:bound-Em2}
\mathcal{E}_{M,2} \le \frac{tC_{\beta, r}K_{n,r}\mathfrak{K}_{\beta}(s\log(5e^2d/s) + \log n)^{1/\beta}}{n^{1 - 1/r}}.
\end{equation}
Combining the bounds~\eqref{eq:bound-Em1} and~\eqref{eq:bound-Em2} yields: with probability at least $1 - 3e^{-t} - t^{-r+1}$, for any $1\le s\le d$, for any model $M\subseteq\{1,2,\ldots,d\}$ such that $|M| = s$,
\begin{align*}
\mathcal{E}_{M} &\le 7\sqrt{\frac{\mathfrak{V}_M(t + s\log(5e^2d/s))}{n}} + \frac{C_{\beta}K_{n,r}\mathfrak{K}_{\beta}(\log(2n))^{1/\beta}(t + s\log(5e^2d/s))^{\max\{1,1/\beta\}}}{n^{1 - 1/r}}\\
&\qquad+ \frac{tC_{\beta, r}K_{n,r}\mathfrak{K}_{\beta}(s\log(5e^2d/s) + \log n)^{1/\beta}}{n^{1 - 1/r}}.
\end{align*}
Combining all inequaliteies completes the proof of~\eqref{eq:influence-function-expansion-bound}.
\end{document}